\theoremstyle{plain}
\newtheorem{theorem}{Theorem}[section]
\newtheorem{lemma}[theorem]{Lemma}
\newtheorem{proposition}[theorem]{Proposition}
\newtheorem{corollary}[theorem]{Corollary}
\newtheorem{question}[theorem]{Question}
\newtheorem{questions}[theorem]{Questions}
\newtheorem{problem}[theorem]{Problem}
\theoremstyle{definition}
\newtheorem{definition}[theorem]{Definition}
\newtheorem{definition/construction}[theorem]{Definition/Construction}
\newtheorem{construction}[theorem]{Construction}
\newtheorem{remark}[theorem]{Remark}
\newtheorem{acknowledgements}[theorem]{Acknowledgements}
\newtheorem{remarks}[theorem]{Remarks}
\newtheorem{notation}[theorem]{Notation}
\DeclareMathOperator{\Diff}{Diff}
\DeclareMathOperator{\Emb}{Emb}
\DeclareMathOperator{\Maps}{Maps}
\DeclareMathOperator{\id}{id}
\DeclareMathOperator{\rel}{rel}
\DeclareMathOperator{\inte}{int}
\DeclareMathOperator{\fix}{fix}
\newcommand{\piinv}{\pi^{-1}}
\newcommand{\BN}{\mathbb N}
\newcommand{\BR}{\mathbb R}
\newcommand{\BZ}{\mathbb Z}
\newcommand{\mA}{\mathcal{A}}
\newcommand{\mD}{\mathcal{D}}
\newcommand{\mF}{\mathcal{F}}
\begin{document}

\title{Self-Referential Discs and the Light Bulb Lemma}

\author{David Gabai}\address{Department of Mathematics\\Princeton
University\\Princeton, NJ 08544}

\thanks{Version 0.47, November 8, 2020.  Partially supported by NSF grants DMS-1607374 and DMS-2003892}
 
 \email{gabai@math.princeton.edu}

\begin{abstract} We show how self-referential discs in 4-manifolds lead to the construction of pairs of discs  with a common geometrically dual sphere which are  homotopic rel $\partial$, concordant and coincide near their boundaries, yet are not properly isotopic.  This occurs in manifolds without 2-torsion in their fundamental group, e.g.  the boundary connect sum of $S^2\times D^2$ and $S^1\times B^3$, thereby exhibiting phenomena not seen with spheres.  On the other hand we show that two such discs are isotopic rel $\partial$ if the manifold is simply connected.  We construct in $S^2\times D^2\natural S^1\times B^3$ a properly embedded 3-ball properly homotopic to a $z_0\times B^3$ but not properly isotopic to $z_0\times B^3$.  \end{abstract}

\maketitle

\setcounter{section}{-1}

\section{Introduction}\label{S0}

 In its simplest form the \emph{light bulb lemma} \cite{Ga} asserts that if a surface $R$ in the 4-manifold $M$ has a geometrically dual sphere $G$, then one can perform the \emph{crossing change} of Figure 1 (Figure 2.1 in \cite{Ga}) via an isotopy of $R$, provided there is a path $\alpha$ from $y$ to $z=R\cap G$ that is disjoint from the tube $B$.  Recall that a geometrically dual sphere is an embedded sphere $G$ with trivial normal bundle that intersects $R$ once and transversely.   This paper investigates what happens when such path $\alpha$ must cross $B$, i.e. is \emph{self-referential}.  It leads to the discovery of homotopic, concordant but non isotopic discs with common geometrically dual spheres, thereby exhibiting new phenomena not seen for spheres in a large class of manifolds.  It also leads to the discovery of knotted 3-balls in certain 4-manifolds.
 
\begin{figure}[ht]
\includegraphics[scale=0.80]{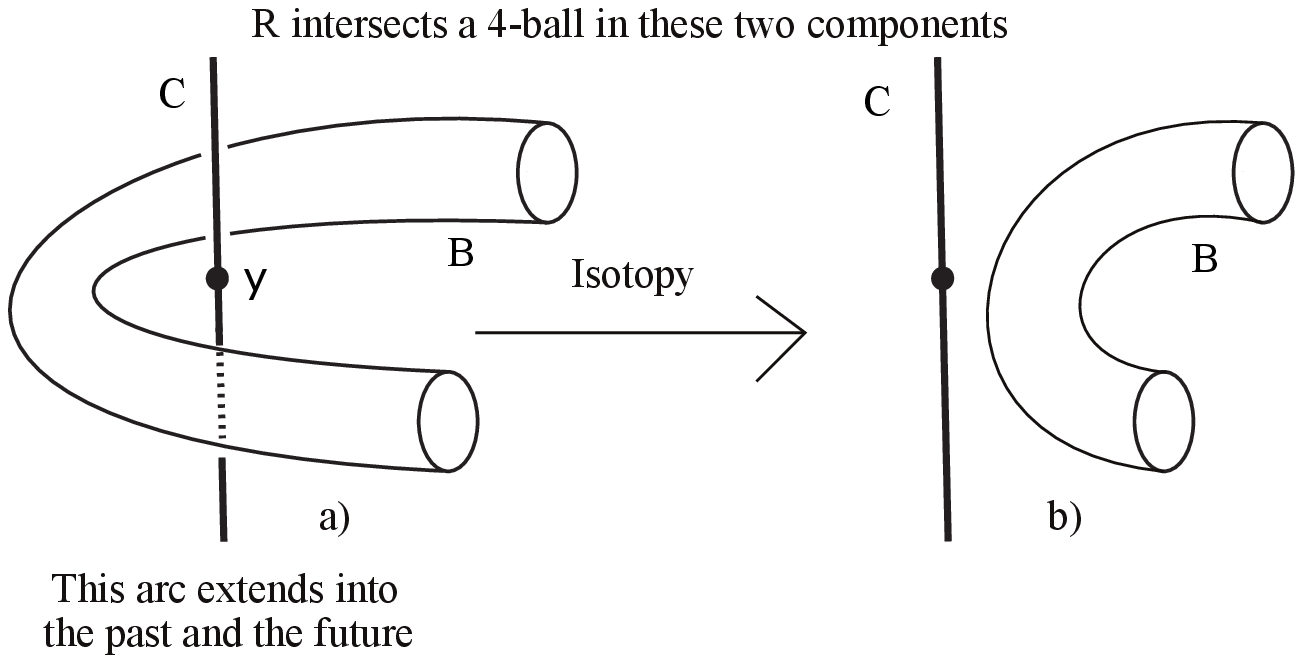}
\caption{}
\end{figure}

Perhaps the simplest example is shown in Figure 2.  Here $V=S^2\times D^2\natural S^1\times B^3 := W\times [-1,1]$ where $W$ is a solid torus with an open 3-ball removed.  Let  $G$ denote the 2-sphere component of $\partial W_0$, where $W_0=W\times 0$.  Let $D_0$ be a vertical disc in the $S^2 \times D^2$ factor and $P$ a round 2-sphere centered in $W_0$ that projects to a disc in $W_0$ disjoint from $D_0$.   See Figure 2 a).  Note that $D_0\cap W_0$ (resp. $P \cap W_0$) is an arc (resp. a circle).  Let $D_1$  be obtained by tubing the disc $D_0$ to the 2-sphere $P$, such that the projection of $D_1$ to $W_0$ is as in Figure 2 b).  Here $D_1\cap W_0$ is an arc and the shading indicates projections from the past and future to $W_0$.  Note that $D_0$ and $D_1$ have the common geometrically dual sphere $G$.  If we could apply the light bulb lemma to $D_1$ near where the tube links the sphere, then $D_1$ is isotopic to $D_0 \rel \partial$.   

Here is the idea for showing that $D_0$ and $D_1$ are non isotopic $\rel \partial$.  Let $I_0$ denote the arc $D_0\cap W_0$ oriented to point into $G$ and $\Emb(I,V; I_0)$ the space of proper arc embeddings based at $I_0$ that coincide with $I_0$ near $\partial I_0$.  Then $D_0, D_1$ naturally correspond to loops $\alpha_0, \alpha_1$ in $\Emb(I,V; I_0)$ where $\alpha_0$ is the constant loop.  Using methods from Dax \cite{Da} we will show that $\alpha_1$ is not homotopic to $\alpha_0$ in $\Emb(I,V; I_0)$ and hence $D_1$ is  not isotopic to $D_0 \rel \partial$.

\setlength{\tabcolsep}{60pt}
\begin{figure}
 \centering
\begin{tabular}{ c c }
 $\includegraphics[width=3.5in]{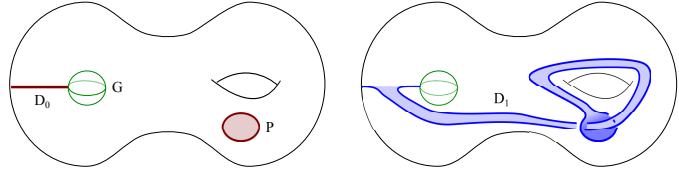}$  
\end{tabular}
 \caption[(a) X; (b) Y]{\label{Figure2}
 \begin{tabular}[t]{ @{} r @{\ } l @{}}
A Self-Referential Disc
\end{tabular}}
\end{figure}

\begin{remarks} \label{failure remarks} i) Let $M$ be a 4-manifold such that $\pi_1(M)$ has no 2-torsion.  Theorem 1.2 \cite{Ga} shows that if two homotopic 2-spheres $A_0, A_1\subset M$ have a common geometrically dual sphere $G$ and coincide near $G$, then they are ambiently isotopic fixing a neighborhood of $G$ pointwise.  Since the isotopy is supported in a disc in the domain, I initially thought that Theorem 1.2 proved that properly homotopic discs with geometrically dual spheres are properly isotopic.  However, the proof of Theorem 1.2 uses that $A_0$ is a sphere as opposed to a disc in one crucial spot.  See  Remark \ref{key point}.  

ii) On the other hand, there is nothing new when $G\subset S^2\times S^1\subset \partial M$, for filling this component with a $S^2 \times D^2$ reduces to the study of isotopy classes of spheres with geometrically dual spheres. That was solved for spheres in 4-manifolds $M$ such that $\pi_1(M)$ has no 2-torsion in \cite{Ga} and in general 4-manifolds by Schneiderman and Teichner \cite{ST}.

iii) Hannah Schwartz \cite{Sch} showed that there exist manifolds with 2-torsion in their fundamental groups supporting homotopic spheres with a common geometric dual that are not isotopic, in fact not even concordant.  Rob Schneiderman and Peter Teichner \cite{ST} identified the Freedman - Quinn (FQ) concordance invariant \cite{FQ} as the exact obstruction and showed that concordance implies isotopy.  

iv) Note that $D_1$ is concordant to $D_0$, thus their difference is not detected by the FQ invariant.   A secondary obstruction to isotoping one sphere to another is the \emph{km} invariant of Stong \cite{St} which is only defined when FQ=0.  See \cite{KM} for a modern exposition.  The Stong invariant does not detect that $D_1$ is not isotopic to $D_0$.  First, one can attempt to transform the isotopy problem for discs to one for spheres by attaching a 0-framed 2-handle to $V$ along $\partial D_0$ and extending $D_0$ and $D_1$ to spheres, but then these spheres become isotopic by \cite{Ga}. Secondly, $km=0$ when the spheres have a common geometrically dual sphere.     \end{remarks}

We now define our obstruction generally and introduce the work of Dax before stating our main results.
\vskip 8pt

\begin{construction} (An obstruction to isotopy) \label{obstruction} Let  $D_0$ be a properly embedded disc in the 4-manifold $M$.  View $D_0$ as $I\times I$ with $I_0$ denoting $I\times 1/2$ and $ \mF_0$  this product foliation. If $D$ is another properly embedded disc that coincides with $D_0$ near $\partial D_0$, then $D$ gives rise to a canonical  element $[\phi_{D_0}(D)]\in \pi_1(\Emb(I,M;I_0))$, where $\Emb(I,M;I_0)$ is the space of smooth embeddings of $I$ based at $I_0$.     To see this, view $ D=I\times I$ where this foliation $\mF$ coincides with $\mF_0$ near  $\partial D_0$.   Use $D_0$ to inform how to modify $\mF$ to a loop $\phi_{D_0}(D)$ in $\Emb(I,M;I_0)$ based at $ I_0$.  (See Definition \ref{novel} for more details.)  Since $[\phi_{D_0}(D_0)] = [1_{I_0}]$, where $1_{I_0}$ is the constant map to $I_0$, and $\Diff(D^2\fix \partial)$ is connected \cite{Sm3}, the class $[\phi_{D_0}(D)] \in \pi_1(\Emb(I,M;I_0))$ is well defined and gives an obstruction to isotoping $D$ to $D_0 \rel \partial D_0$. \end{construction}

Let $f_0: N^n\to M^m$ be an embedding where $N$ and $M$ are closed manifolds.  In 1972 Jean-Pierre Dax showed \cite{Da} that $\pi_k(\Maps(N,M), \Emb(N,M), f_0)$ is isomorphic to a certain bordism group when $2\le k\le 2m-3n-3$.  While stated very abstractly,  the case $N=I$ and $M$ a 4-manifold can be restated with a strikingly elegant formulation.  This paper gives that reformulation a self contained exposition.  See \S 3.  Let $\pi_1^D(\Emb(I,M; I_0))$ denote the subgroup of $ \pi_1(\Emb(I,M; I_0))$ represented by loops that are inessential in $\Maps(I,M:I_0)$. The following result is a slightly stronger version of the restated Theorem A \cite{Da} p.345 for $N=I$ and $M$ a 4-manifold.

\begin{theorem} (Dax Isomorphism Theorem) \label{dax} Let $I_0$ be an oriented properly embedded $[0,1]$ in the oriented 4-manifold $M$.  Then 

i) There is a homomorphism $d_3:\pi_3(M, x_0)\to \BZ[\pi_1(M)\setminus 1]$ with image $D(I_0)$, called the \emph{Dax kernal}.

ii) $\pi_1^D(Emb(I,M;I_0))$ is  canonically isomorphic to $\BZ[\pi_1(M)\setminus 1]/D(I_0)$ and generated by $\{\tau_g|g\neq 1, g\in \pi_1(M)\} $.  \end{theorem}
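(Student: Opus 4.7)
The plan is to define the Dax homomorphism as a signed count of fiberwise double points of a null-homotopy of a loop of embeddings, and to invert it via finger-move loops $\tau_g$; the strategy mirrors Dax's bordism-theoretic argument but is made transparent by the codimensions of arcs in a $4$-manifold.

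\textbf{Construction.} For $\gamma \in \pi_1^D(\Emb(I,M;I_0))$, I would choose a null-homotopy $H:D^2\to\Maps(I,M;I_0)$ and perturb its adjoint $\hat H:D^2\times I\to M$ so that the fiberwise double-point set --- pairs $((s,t_1),(s,t_2))$ with $t_1<t_2$ and $\hat H(s,t_1)=\hat H(s,t_2)$ --- is a finite transverse $0$-manifold (the parameter space $D^2\times\{t_1<t_2\}$ has dimension $4=\dim M$). At each double point, closing $\hat H(s,[t_1,t_2])$ to $x_0$ via $\hat H(s,[0,t_1])$ and a fixed arc from $\hat H(s,0)$ to $x_0$ yields an element of $\pi_1(M)\setminus 1$, weighted by a local-orientation sign, to give $d(\gamma,H)\in\BZ[\pi_1(M)\setminus 1]$. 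Writing $S^3=(S^2\times I)/(S^2\times\partial I)$ and applying the same count to the adjoint of a representative of $\alpha\in\pi_3(M,x_0)$ defines $d_3$, whose image is by definition the Dax kernel $D(I_0)$. The finger-move loop $\tau_g$, obtained by pushing a small arc of $I_0$ once around a loop at an interior point of $I_0$ representing $g$, admits a canonical null-homotopy with exactly one fiberwise double point, so $d(\tau_g)=g$.

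\textbf{Well-definedness and surjectivity.} Two perturbations are connected by a generic $1$-parameter family whose fiberwise double points cobound a $1$-manifold, so $d(\gamma,H)$ is independent of perturbation; the analogous argument shows $d_3$ is independent of the representative of $\alpha$. Two null-homotopies $H_0,H_1$ of the same $\gamma$ agree on $S^1\times I$ and glue to a map $S^2\times I\to M$ constant on $S^2\times\partial I$, hence to a class $\beta\in\pi_3(M)$ with $d_3(\beta)=d(\gamma,H_0)-d(\gamma,H_1)$; therefore $[d(\gamma,H)]\in\BZ[\pi_1(M)\setminus 1]/D(I_0)$ depends only on $[\gamma]$, yielding a well-defined homomorphism $d:\pi_1^D(\Emb(I,M;I_0))\to\BZ[\pi_1(M)\setminus 1]/D(I_0)$, additive under loop concatenation. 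Surjectivity and the generation statement follow from $d(\tau_g)=g$: every element of $\BZ[\pi_1(M)\setminus 1]$ is a signed sum, realized by the corresponding concatenation of $\tau_g$'s.

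\textbf{Injectivity and main obstacle.} For injectivity, if $d(\gamma)=0$ in the quotient, then after modifying $H$ by a representative of a class in $\pi_3(M)$ of matching $d_3$-image (a bubbling operation on $H$) one may assume the fiberwise double points of $\hat H$ cancel in signed, $\pi_1$-labelled pairs on the nose. Each such pair will be eliminated by a parametrized Whitney move in the $2$-parameter family: the matching $\pi_1$-label produces a null-homotopy in $M$ of the Whitney loop associated to the pair, and the codimension-$3$ room available to arcs in $M^4$ lets this be realized as a cancelling family of arc isotopies. After finitely many cancellations $\hat H$ becomes fiberwise embedding-valued, so $[\gamma]=0$. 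The main obstacle will be precisely this final step --- executing parametrized Whitney cancellation of paired fiberwise double points while leaving the boundary loop $\gamma$ untouched and introducing no new double points elsewhere in the $D^2$-family. I would handle it as a parametrized refinement of the Whitney and light-bulb moves of \cite{Ga}, using transversality and tubular-neighborhood arguments; the favourable codimensions of arcs in $M^4$ are the crucial enabling feature, ensuring that the guiding Whitney arcs in $D^2$ can be embedded and that their accompanying regular homotopies are promoted to honest isotopies of arcs.
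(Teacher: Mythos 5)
Your construction of $d$ as a signed, $\pi_1$-weighted count of fiberwise double points, the definition of $d_3$ and the Dax kernel, the well-definedness argument (gluing two null-homotopies of the same loop into a $\pi_3$-class), and surjectivity via the spin/finger-move loops $\tau_g$ all match the paper's Steps 1--3 and 5. The divergence, and the gap, is in injectivity. You propose to make $\hat H$ fiberwise embedded by parametrized Whitney cancellation; this is essentially Dax's original argument, and the paper deliberately avoids it (its closing remarks note that working with the commuting generators $\tau_g$ ``enables us to avoid a parametrized double point elimination argument and the need to modify $F_0$ to eliminate double points $x$ with $g_x=1$''). Two concrete problems. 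First, $d$ records only double points with $g_x\neq 1$; double points with trivial group element need not occur in cancelling signed pairs. For example, the standard null-homotopy of a spin about a null-homotopic loop has exactly one such point of sign $+1$, while that loop also bounds through embeddings; so the signed count of $g_x=1$ points is not an invariant, and Whitney moves --- which preserve the signed count within each $\pi_1$-class --- cannot remove a lone one. These points must be killed by a different mechanism (they are born and die individually at Whitney-umbrella singularities of generic homotopies of the family), which your argument never supplies. Second, the parametrized Whitney move itself is left as an acknowledged ``main obstacle'': the Whitney disc lives in $M\times D^2$, the move must preserve the graph-over-$D^2$ structure of $\hat H$ so that the result is still a family of arcs, and framing and disjointness from the rest of the $3$-dimensional image must be controlled. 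That is precisely the technical core you would be re-importing from Dax, not a refinement one can wave at.

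The paper's route around both difficulties is its Step 4: by induction on the number of double points, splitting one off at a time via an explicit local normal form, it shows that any loop in the Dax group is homotopic \emph{in} $\Emb(I,M;I_0)$ to the composite of the spin maps $\tau_{\sigma_{x_i}g_{x_i}}$ read off from any null-homotopy --- including the $g_x=1$ points, which contribute $\tau_1=1_{I_0}$ and vanish for free. Injectivity is then formal: spins commute and $\tau_{-g}=\tau_g^{-1}$, so $d(\alpha)=0$ forces $\alpha\simeq 1_{I_0}$. To repair your outline, replace the Whitney-cancellation step with this decomposition, or else genuinely execute the parametrized elimination including the trivial-group-element case.
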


\begin{remark} The $\tau_g$'s arise from a spinning construction of Ryan Budney.  See Definition \ref{spin}.\end{remark}

Thus Construction \ref{obstruction} together with the Dax isomorphism theorem gives a concrete obstruction to isotoping  one embedded disc to another $\rel \partial$.

\begin{corollary} Let $D_0$ be a  properly embedded disc in the oriented 4-manifold and $\mD$ be the isotopy classes of embedded discs homotopic $\rel \partial$ to $D_0$, then there is a canonical function $\phi_{D_0}: \mD \to \BZ[\pi_1(M)\setminus 1]/D(I_0)$ such that if $D$ is a embedded disc homotopic rel $\partial$ to $D_0$, then $\phi_{D_0}([D])\neq 0$ implies $D$ is not isotopic to $D_0\rel\partial$.\end{corollary}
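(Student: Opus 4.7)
The plan is to define $\phi_{D_0}([D])$ as the image of $[\phi_{D_0}(D)]\in \pi_1(\Emb(I,M;I_0))$, produced by Construction \ref{obstruction}, under the Dax isomorphism of Theorem \ref{dax}(ii). To legitimize this, I would verify three points: that the class $[\phi_{D_0}(D)]$ actually lies in the subgroup $\pi_1^D(\Emb(I,M;I_0))$ whenever $D$ is homotopic rel $\partial$ to $D_0$; that the resulting element of $\BZ[\pi_1(M)\setminus 1]/D(I_0)$ depends only on the rel $\partial$ isotopy class of $D$; and that the element vanishes when $D=D_0$. The obstruction claim then follows by contraposition.

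For the first point, after a collar adjustment (which does not change the isotopy class of $D$) one may assume $D$ coincides with $D_0$ near $\partial D_0$, so that Construction \ref{obstruction} applies directly. A smooth homotopy rel $\partial$ from $D$ to $D_0$ provides a family $H_s\colon I\times I\to M$ of parameterized (not necessarily embedded) discs, agreeing with $D_0$ near $\partial(I\times I)$. Applying the foliation-to-loop recipe of Construction \ref{obstruction} pointwise produces a continuous path of loops in $\Maps(I,M;I_0)$ from $\phi_{D_0}(D)$ to the constant loop $1_{I_0}$, which by definition exhibits $[\phi_{D_0}(D)]\in \pi_1^D(\Emb(I,M;I_0))$. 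For the second point, if $D$ and $D'$ are isotopic rel $\partial$, an ambient isotopy $\{h_t\}$ of $M$ carrying $D$ to $D'$ and fixing a neighborhood of $\partial D_0$ transports the chosen parameterization of $D$ to one of $D'$; its tracks then produce a free homotopy in $\Emb(I,M;I_0)$ between the associated loops, so their Dax classes coincide. The third point is tautological, since $\phi_{D_0}(D_0)=1_{I_0}$.

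The main obstacle is the first step, because the parameterization-to-loop recipe of Construction \ref{obstruction} is defined intrinsically on embedded discs, yet the null-homotopy one must produce lives in the larger space $\Maps(I,M;I_0)$. One has to view the recipe as the restriction to $\Emb\subset\Maps$ of a natural assignment well-defined modulo reparameterization by $\Diff(D^2\fix\partial)$, and invoke Smale's connectedness of this diffeomorphism group \cite{Sm3} so that a rel $\partial$ homotopy through maps really does induce a continuous path of loops in $\Maps(I,M;I_0)$ based at $1_{I_0}$. Once this continuity is pinned down, everything else is essentially formal from Construction \ref{obstruction} and Theorem \ref{dax}.
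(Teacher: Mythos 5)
Your proposal is correct and follows essentially the same route as the paper: the paper also defines $\phi_{D_0}$ via the foliation-to-loop recipe of Construction \ref{obstruction} (elaborated in Definition \ref{novel}), uses Smale's connectedness of $\Diff(D^2\fix\partial)$ for well-definedness on isotopy classes, and notes that a homotopy rel $\partial$ to $D_0$ places the loop in $\pi_1^D(\Emb(I,M;I_0))$ so that the Dax isomorphism applies. The only difference is that the paper leaves these verifications implicit, whereas you spell them out.
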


Note that $\phi_{D_0}$ is a function of $D_0$.
\vskip 8pt

In the setting of properly embedded discs with a common dual sphere, the methods of \cite{Ga} show that $\phi_{D_0}$ is a homomorphism whose image contains a particular subgroup and also proves the converse when $\pi_1(M)=1$.

\begin{theorem}  \label{main} Let $M$ be a compact 4-manifold and $D_0$ a properly embedded 2-disc with a geometrically dual sphere $G\subset \partial M$.  Let $\mD$ be the isotopy classes of embedded discs homotopic $\rel \partial$ to $D_0$. 

i)  If $\pi_1(M)=1$, then $\mD=[D_0]$, i.e. if $D_0$ and $D_1$ are homotopic rel $\partial$, then they are isotopic $\rel \partial$.  

ii) In general, $\mD$ is an abelian group with zero element $[D_0]$.  There is a homomorphism $\phi_{D_0}:\mD\to   \BZ[\pi_1(M)\setminus 1]/D(I_0)\cong \pi_1^D(\Emb(I,M; I_0))$.   It maps onto the subgroup generated by elements of the form $g+g^{-1}$ and $\hat \lambda$, where $\hat\lambda^2=1$.  \end{theorem}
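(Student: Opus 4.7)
For part (i), the plan is to combine the Dax isomorphism with the light bulb lemma. When $\pi_1(M)=1$, the Dax target $\BZ[\pi_1(M)\setminus 1]/D(I_0)$ is trivial, so by Theorem \ref{dax}(ii), $\pi_1^D(\Emb(I,M;I_0))=0$. Since $D_0$ and $D_1$ are homotopic rel $\partial$, the loop $\phi_{D_0}(D_1)$ represents a class in $\pi_1^D$, hence is null-homotopic in $\Emb(I,M;I_0)$. A chosen null-homotopy decomposes into a sequence of elementary moves of arc embeddings, each of which I would realize as an ambient isotopy of the disc $D_1$. The potentially self-referential crossing changes arising along the way are precisely where the light bulb lemma of \cite{Ga} applies: because $\pi_1(M)=1$, any obstructing path $\alpha$ can be homotoped off any given tube $B$, so every crossing change is realizable by isotopy. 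Concatenating these isotopies yields $D_1 \simeq D_0$ rel $\partial$.

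For part (ii), my plan is to define an abelian group structure on $\mD$ using $G$, prove that $\phi_{D_0}$ is a homomorphism, and then produce explicit discs realizing each generator of the claimed subgroup. Using the collar $G\times [0,1]\subset M$ near $\partial M$ and the triviality of the normal bundle of $G$, I would normalize two representatives $D_1, D_2$ so that their non-trivial parts sit in disjoint parallel slabs, say over $G\times [1/3, 2/3]$ and $G\times [2/3,1]$, and then juxtapose them to form $D_1+D_2$. Commutativity follows from a rotation through the $G$-direction, associativity from the obvious three-fold version, and $[D_0]$ is the identity. Under this operation $\phi_{D_0}$ becomes a homomorphism because juxtaposition of discs corresponds precisely to concatenation of the loops $\phi_{D_0}(D_i)$ in $\pi_1(\Emb(I,M;I_0))$.

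To show the image of $\phi_{D_0}$ contains each generator $g+g^{-1}$, I would invoke the Budney spinning construction (Definition \ref{spin}): starting from $D_0$ and a based loop representing $g$, drag a small arc of $D_0$ once around $g$ through a transverse arc of $G$ and back. The resulting loop of arcs passes through two transverse intersection moments whose Dax contributions are $g$ and $g^{-1}$ with the same sign, giving $\phi_{D_0}=g+g^{-1}$. For $\lambda$ with $\lambda^2=1$, a $\BZ/2$-symmetric variant of the spinning, which uses $\lambda=\lambda^{-1}$ to identify the two intersection moments of the $g=\lambda$ construction, produces a disc with Dax invariant $\hat\lambda$ alone rather than $2\hat\lambda$.

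The main obstacle I anticipate lies in establishing well-definedness of the group operation in part (ii) — in particular, showing that different choices of representatives and of the tubing location within $G\times [0,1]$ yield discs isotopic rel $\partial$. The principal tool will be isotopy extension within the trivial normal bundle of $G$, combined with the light bulb lemma applied in the simply-connected piece $G\times [0,1]$ to resolve any crossings introduced by the choice of normalization. A secondary subtlety will be checking that every Dax invariant arising from a disc homotopic to $D_0$ with common dual $G$ lies in the claimed symmetric subgroup, which I expect to follow from the forced symmetry of double points in any homotopy whose endpoints share a geometric dual.
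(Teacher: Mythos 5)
There is a genuine gap in your argument for part (i), and it is a logical one rather than a technical one. You argue: $\pi_1(M)=1$ forces the Dax group to vanish, hence $\phi_{D_0}(D_1)$ is null-homotopic in $\Emb(I,M;I_0)$, hence $D_1$ is isotopic to $D_0$. But $\phi_{D_0}$ is only an \emph{obstruction}: vanishing of $[\phi_{D_0}(D_1)]$ does not imply the discs are isotopic, and the paper explicitly leaves the determination of $\ker\phi_{D_0}$ as an open question. The step where you ``realize each elementary move of the null-homotopy as an ambient isotopy of the disc'' is precisely where this fails: a null-homotopy of the loop of arcs is a 2-parameter family of embedded arcs, and such a family need not assemble into (or be covered by) an isotopy of embedded discs --- the arcs at different parameter values can intersect each other. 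The paper instead proves (i) geometrically, with no reference to Dax at all: it extends the normal-form machinery of \cite{Ga} to show that any $D_1$ homotopic rel $\partial$ to $D_0$ with common dual $G\subset\partial M$ can be isotoped to \emph{self-referential form}, i.e.\ built from $D_0$ by attaching double tubes indexed by distinct nontrivial 2-torsion elements and self-referential discs indexed by nontrivial elements of $\pi_1(M)$; when $\pi_1(M)=1$ this data is empty and $D_1$ is isotopic to $D_0$. The light bulb lemma enters inside that machinery, not as a way to upgrade a null-homotopy of arcs.

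Part (ii) inherits the same omission: your well-definedness of $D_1+D_2$ by ``normalizing the non-trivial parts into disjoint slabs'' presupposes exactly the sector/self-referential form theorem, which is the main geometric content of the paper's Section 2 and cannot be waved through with isotopy extension alone. Your realization of $g+g^{-1}$ by spinning is in the right spirit (it matches the paper's self-referential disc $D_g$), but the claim that the two double points contribute with the \emph{same} sign is the entire content of the paper's careful orientation computation and needs justification. For $\hat\lambda$ your ``$\BZ/2$-symmetric variant of spinning'' does not obviously produce an embedded disc; the paper realizes $\hat\lambda$ by the double-tube construction $D_\lambda$ and computes $\phi_{D_0}(D_\lambda)=\hat\lambda$ indirectly, by showing two double tubes are isotopic to a single self-referential tube of type $\hat\lambda$, whence $2\phi_{D_0}(D_\lambda)=2\hat\lambda$.
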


\begin{remarks} i)  We shall see in \S 4 that for $M=S^2\times D^2\natural S^1\times B^3$ the Dax kernal is trivial and the disc $D_1$ of Figure 2 maps to $t+t^{-1}$, thus $D_0$ and $D_1$ are not isotopic rel $\partial$.

ii) $\mD$ is a torsor when there is a dual sphere.  Fixing the element $[D_0]$ turns it into a group with identity $[D_0]$. $ \BZ[\pi_1(M)\setminus 1]$ acts on $\mD$ by adding self-referential tubes and $\BZ[T_2]$ acts on $\mD$ by adding double tubes, where $T_2$ is the set of non trivial 2-torsion elements.  See \S 4.  \end{remarks}


As an application we show the existence of knotted 3-balls in 4-manifolds.

\begin{theorem}\label{knotted ball}  If $V=S^2\times D^2\natural S^1\times B^3$ and $B_0= x_0\times B^3$, then there exists a properly embedded 3-ball $B_1\subset V$ such that $B_1$ is properly homotopic but not properly isotopic to $B_0$.  See Figure 3.\end{theorem}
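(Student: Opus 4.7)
The plan is to construct $B_1$ by a self-referential $3$-dimensional tubing analogous to the disc construction of Figure 2, to obtain proper homotopy from the fact that the tubed-in sphere bounds a $4$-ball, and to deduce non-isotopy by foliating $B_0$ and $B_1$ by $2$-disc cross-sections and applying the Dax obstruction of Construction \ref{obstruction} to a carefully chosen leaf.

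For the construction, pick a small unknotted $3$-sphere $\Sigma \subset \inte V$ disjoint from $B_0$ and bounding a $4$-ball, and an embedded arc $\alpha$ from a point of $\partial B_0$ to $\Sigma$ whose interior transversely meets $B_0$ exactly once; this transverse crossing is what makes the configuration self-referential. Form $B_1$ from $B_0$ by $3$-dimensional tubing along a regular neighborhood of $\alpha$: excise two small open $3$-balls (one in $B_0$, one in $\Sigma$) and glue in a tube $S^2 \times [0,1]$. The result has the same boundary as $B_0$, and since $\Sigma$ bounds a $4$-ball it is null-homotopic, so the whole modification is homotopically trivial and $B_1$ is properly homotopic to $B_0$ rel $\partial$.

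For non-isotopy, foliate $B_0$ by $2$-disc cross-sections $\{D_0^s\}_{s \in [-1,1]}$ arising from the product structure $B^3 = D^2 \times [-1,1]$, and suppose for contradiction that an ambient proper isotopy $\psi_t$ of $V$ carries $B_0$ to $B_1$. Then $\{\psi_1(D_0^s)\}$ is a $2$-disc foliation of $B_1$ which, after an internal ambient isotopy of $B_1$, may be identified with the natural foliation $\{D_1^s\}$ of $B_1$ coming from the tubing construction. For the value of $s$ for which the leaf $D_0^s$ is the one met by $\alpha$, the disc $D_1^s$ differs from $D_0^s$ by precisely the self-referential tubing of Figure 2 (with an equatorial $2$-sphere of $\Sigma$ playing the role of the sphere $P$), and the computation referenced in \S 4 gives $\phi_{D_0^s}(D_1^s) = t + t^{-1} \neq 0$ in $\BZ[\pi_1(V)\setminus 1]/D(I_0)$. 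By Theorem \ref{dax} this forces $D_0^s$ and $\psi_1(D_0^s) \simeq D_1^s$ to be non-isotopic rel $\partial$ in $V$, a contradiction.

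The main obstacle is the foliation-identification step: one must verify that the restriction of $\psi_1$ to the leaves of the $B_0$-foliation really yields, after an internal isotopy of $B_1$, a genuine rel-$\partial$ ambient isotopy in $V$ between $D_0^s$ and $D_1^s$, and that the cross-section comparison matches the Figure 2 model so that the Dax invariant evaluates exactly to $t+t^{-1}$. Once $\alpha$ is arranged so that its intersection with $B_0$ lies on a single leaf, and one invokes the contractibility (rel $\partial$) of the space of proper $2$-disc foliations of $B^3$ to align the two foliations, the reduction to the disc case is clean and the theorem follows from Construction \ref{obstruction} together with the Dax computation.
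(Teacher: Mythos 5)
Your construction of $B_1$ is plausibly the right kind of object (and proper homotopy to $B_0$ is fine, since the tubed-in sphere bounds a ball), but the non-isotopy argument has a genuine gap at exactly the step you flag. If $\psi_t$ is a proper ambient isotopy with $\psi_1(B_0)=B_1$, the comparison it induces between a leaf $D_0^s$ of $B_0$ and a leaf of $B_1$ is an isotopy of properly embedded discs that is \emph{not} rel $\partial$: the boundary circles of the leaves move freely in $\partial V$ during the isotopy, and no internal reparametrization or realignment of the two foliations changes this. The invariant $\phi_{D_0^s}$ of Construction \ref{obstruction} is only defined for discs that coincide with $D_0^s$ near $\partial$, and only obstructs isotopy \emph{rel} $\partial$; so no contradiction is obtained. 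A second problem is that the leaves of $B_0=x_0\times B^3$ lie in the $S^1\times B^3$ factor and are disjoint from the dual sphere $G$, so they are not the disc $D_0$ of Figure 2, and the computation ``$\phi=t+t^{-1}$'' cited from \S 4 does not apply to them without being redone; in particular your arc $\alpha$ is required to meet $B_0$ transversely once (rather than the $4$-ball bounded by $\Sigma$, which is the actual analogue of the self-referential condition), and you never arrange that the relevant group element is the generator $t$ of $\pi_1(V)$ rather than $1$.

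The paper's proof avoids cross-sections entirely. One first upgrades Schwartz's Lemma 2.3 to discs (Lemma \ref{schwartz}): there is a diffeomorphism $\psi$ of $V$, fixing a neighborhood of $\partial V$ pointwise and homotopic to $\id$ rel $\partial$, with $\psi(D_0)=D_1$, where $D_1$ is the knotted disc of Figure 2. One sets $B_1:=\psi(B_0)$, so that $B_1$ is properly homotopic to $B_0$ and $B_1\cap D_1=\emptyset$. If $B_1$ were properly isotopic to $B_0$, then $D_1$ could be isotoped off $B_0$, i.e.\ into $V\setminus \inte (N(B_0))\cong S^2\times D^2$, where Theorem 10.4 of \cite{Ga} forces $D_1$ to be isotopic to $D_0$ rel $\partial$ --- contradicting $\phi_{D_0}(D_1)=t+t^{-1}\neq 0$ from \S 4. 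The detector of the knotting of $B_1$ is thus a \emph{disjoint} knotted disc, not a cross-sectional disc of the ball itself; some such indirect mechanism is what your proposal is missing.
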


Here is the idea of the proof.  An extension of Hannah Schwartz' Lemma 2.3 \cite{Sch} to discs implies that there is a diffeomorphism $\phi:V\to V$ fixing a neighborhood of $\partial V$ pointwise  and  homotopic to $\id$ rel $\partial$ such that $\phi(D_0)=D_1$.  Let $B_0$ denote the 3-ball $x_0\times B^3$ in the $S^1\times B^3$ factor of $V$ and $B_1:=\phi(B_0)$.  If $B_1$ is isotopic to $B_0$, then since $B_1$ is disjoint from $D_1$, $D_1$ can be isotoped into the $S^2\times D^2$ factor of $V$. Theorem 10.4 \cite{Ga} implies that $D_1$ is isotopic to $D_0$ rel $\partial$, a contradiction. $B_1$ is obtained from $B_0$ by embedded surgery as described in more detail in \S 5.  See Figure 3.     \vskip 10 pt

\setlength{\tabcolsep}{40pt}
\begin{figure}
 \centering
 \begin{tabular}{ c c }
 $\includegraphics[width=5in]{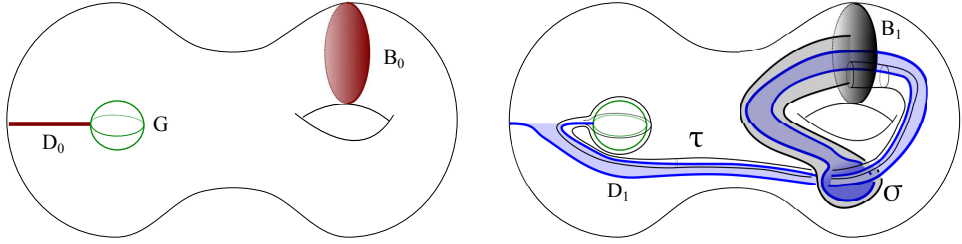}$  \end{tabular}
 \caption[(a) X; (b) Y]{\label{fig:Fig1}\begin{tabular}[t]{ @{} r @{\ } l @{}}
 A Knotted 3-Ball
 \end{tabular}}
\end{figure}


This paper is organized as follows.  Basic definitions will be given in \S1.  Section \S2 will describe to what extent the methods of \cite{Ga} extend to discs.  In particular we will show that if $D_0$ and $D_1$ are homotopic and have a common dual sphere, then $D_1$ can be put into a \emph{self-referential form} with respect to $D_0$.  This is the analogue of the normal form of \cite{Ga} except that in addition to double tubes, $D_1$ can have finitely many self-referential discs.   Theorem \ref{main} i) will also be proved.  The Dax isomorphism theorem \cite{Da} will be stated and proved in \S3.  
A slightly sharper version of Theorem \ref{main} ii) will be proved in \S4.  Applications to knotted 3-balls in 4-manifolds and further questions will be given in \S5.  

\begin{acknowledgements}  We thank Hannah Schwartz for helpful conversations and Ryan Budney for his comments and for teaching me about the modern theory of embedding spaces.
We thank the Max Planck Institute in Bonn  and the Banff International Research Station for the opportunity to present the main results at workshops respectively in September and November 2019.  Much of this paper was written while a member of the Institute for Advanced Study.  \end{acknowledgements}

\section{Basic Definitions}

We say that $G$ is a \emph{dual sphere} for the properly embedded disc $D\subset M$ if $G\subset \partial M$ and $D$ intersects $G$ exactly once and transversely.  It would be more proper to call such a $G$ a \emph{geometrically dual boundary} sphere to distinguish it from geometrically dual spheres intersecting $D$ at an interior point. A \emph{geometric dual sphere} is one with trivial normal bundle that intersects a given surface exactly once and transversely.  Trivial normal bundle is automatic here since $G$ is an embedded homologically non trivial sphere in an orientable 3-manifold.  Unless said otherwise all dual spheres for discs lie in the boundary of the 4-manifold.

If $S_0$ and $S_1$ are oriented surfaces, then we say that they are tubed \emph{coherently} if the tubing creates an oriented surface whose orientation agrees with that of $S_0$ and $S_1$.

This paper works in the smooth category.  All manifolds are orientable.

\section{Self-Referential Form}

Let $D_0$ be a properly embedded disc with dual sphere $G\subset \partial M$.  In this section we show that if $D_1$ is an embedded disc with $\partial D_0=\partial D_1$  and $D_1$ is homotopic $\rel \partial$ to $D_0$, then $D_1$ can be isotoped to a \emph{self-referential form}, i.e. $D_1$ looks like $D_0$ except for finitely many double tubes representing distinct non trivial 2-torsion elements of $\pi_1(M)$ and self-referential discs.
 
\setlength{\tabcolsep}{60pt}
\begin{figure}
 \centering
\begin{tabular}{ c c }
 $\includegraphics[width=3.5in]{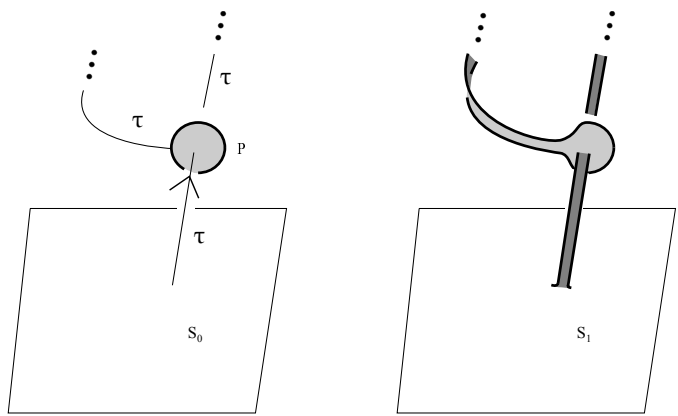}$  
\end{tabular}
 \caption[(a) X; (b) Y]{\label{FigureB,1}
 \begin{tabular}[t]{ @{} r @{\ } l @{}}
A Self-Referential Disc
\end{tabular}}
\end{figure}

\begin{definition}  Let $S_0$ be a properly embedded oriented surface in the 4-manifold $M, B\subset \inte(M)$ an oriented embedded 3-ball with $B\cap S_0=\emptyset$ and $\partial B=P$.  Let $\tau:[0,1]\to M$ be an embedded path from $\inte(S_0)$ to $P$ such that $\tau(0)=\tau\cap S_0, \tau(1)=\tau \cap P$ and $\inte(\tau)$ intersects $B$ exactly once and transversely.  Let $S_1$ be obtained from $S_0$ by tubing $S_0$ to $P$ along $\tau$.  We say that $S_1$ is obtained from $S_0$ by attaching a \emph{self-referential disc}.  See Figure \ref{FigureB,1}.\end{definition}

\begin{remarks}  i)  The disc $D_1$ in Figure 2 is obtained by attaching a self-referential disc to the disc $D_0$. 

ii) A priori to define the tubing, $\tau$ should be a framed embedded path as in Definition 5.4 \cite{Ga}.  Up to isotopy supported in $N(\tau)$ there are four isotopy classes, exactly two of which are coherent with the orientations of $S_0$ and $P$.  These two, as do the non coherent ones, differ by the non trivial element of $\pi_1(SO(3))$ on the $B^3$ normal fibers of $N(\tau)$ as one traverses $\tau$.  Since $\tau$ attaches to a sphere, the two choices give isotopic $S_1$'s.  Thus $S_1$ depends only on $\tau$ and coherence/noncoherence. Equivalently, we can fix the orientation of the sphere one way or the other and then insist that the attachment be coherent. \end{remarks}

\begin{definition} Now assume that $D_0\subset M$ is a properly embedded oriented disc with dual sphere $G$.  Let $B\subset\inte(M)$ an oriented 3-ball with $\partial B=P$ and $B\cap D_0=\emptyset$.  Let $\tau_0$ be an embedded arc from $\inte(D_0)$ to $\inte(B)$ intersecting $B\cup D_0$ only at its endpoints.  Think of it as being very short and view $D_0\cup \tau_0\cup B$ as  the basepoint for $\pi_1(M)$.  Associated to $g\in \pi_1(M)$ and $\sigma\in \pm$ construct $D_1$ by attaching a self-referential disc as follows.  Let $\tau_1 $ be a path from $B$ to $\inte(D_0)\setminus \tau_0$ such that $\tau_1(0)=\tau_0(1), \tau_1\cap(D_0\cup\tau_0\cup B)=\partial \tau_1$ and $\tau_1$ represents the class $g$.  Use $\tau=\tau_0 * \tau_1$ to construct $ D_1$ where $\sigma$ determines whether or not the attachment is coherent.  See Figure 4.

Given $\sigma_1 g_1, \cdots, \sigma_n g_n$ construct a disc $D_1$ by attaching $n$ self-referential discs to $D_0$ by starting with $n$ adjacent copies of $\tau_0 \cup B$ and then attaching $n$ self-referential discs as above.  \end{definition}

\begin{remark}   Since $D_0$ has a dual sphere the inclusion $M\setminus(D_0\cup \tau_0\cup B)\to M$ induces a $\pi_1$-isomorphism.  Thus once $B$ and $\tau_0$ are chosen, if $D_1$ is obtained by attaching one self-referential disc, then  $D_1$ is determined up to isotopy by $\sigma$ and $g$.   In a similar manner, if $D_1$ is obtained by attaching n self-referential discs, then once the $n$ adjacent copies of $\tau_0 \cup B$ are chosen it is determined up to isotopy by $\sigma_1 g_1, \cdots, \sigma_n g_n$.   \end{remark}

The statement of \emph{self-referential form} given in Defintion \ref{sr form} below is  quite technical, so for now we give the following informal one. Starting with $D_0$ construct the normal form analogue of Definition 5.23 and Figure 5.10 \cite{Ga} and then attach self-referential discs to obtain $D_1$.  The actual definition includes some constraints and keeps track of certain orientations. The following is the main result of this section.

\begin{theorem}  \label{srf} Let $D_0, D_1$ be properly embedded discs in the 4-manifold $M$ that coincide near their boundaries and have a geometrically dual sphere $G\subset \partial M$.  If $D_0$ and $D_1$ are homotopic $\rel \partial$, then $D_1$ can be isotoped rel $\partial$ to self-referential form with respect to $D_0$.\end{theorem}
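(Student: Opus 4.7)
The plan is to run the normal form argument of Section 5 of \cite{Ga} in the setting of discs, recording each obstruction to applying the light bulb lemma as a self-referential disc.

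First I would use the common dual sphere $G\subset\partial M$ to normalize $D_0$ and $D_1$ near $G$: an isotopy supported in a collar of $G$ arranges that $D_0$ and $D_1$ coincide on a neighborhood of the unique transverse intersection point $z = D_i\cap G$, not merely near $\partial D_i$.  Away from $G$, the sphere $G$ then serves as the reservoir for the tubing trick of the light bulb lemma.  After putting $D_0$ and $D_1$ in transverse position, I would follow Section 5 of \cite{Ga} essentially verbatim: pair up the intersections of $D_0\cap D_1$ using $G$, then apply finger and Whitney moves to reduce the configuration to one in which $D_1$ is $D_0$ together with a finite collection of embedded tubes attached along arcs in $D_0$.

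Next I would attempt to simplify each tube via the light bulb lemma.  Every tube carries a class in $\pi_1(M)$ coming from its attaching arc.  When the path $\alpha$ required by the lemma, running from the tube's endpoint to $z$, can be chosen disjoint from all the other tube neighborhoods $B$, the tube either cancels (trivial class) or pairs with a partner tube to produce a double tube labelled by a $2$-torsion element, exactly as in the sphere case of \cite{Ga}.  The new feature in the disc setting is that sometimes every candidate $\alpha$ is forced to meet the very $3$-ball $B$ cobounded by the tube itself.  In this self-referential situation I would show directly that the tube, together with a small meridian $3$-ball, is isotopic to a self-referential disc as defined at the start of this section, with the homotopy class of $\alpha$ recovering the element $g\in\pi_1(M)$ and the framing data recovering the sign $\sigma\in\{\pm\}$.

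The main obstacle will be organizing the combinatorics so that simplifying one tube does not create fresh intersections that undo previous simplifications elsewhere.  I would argue by induction on a compound complexity, for instance $|D_0\cap D_1|$ together with the number of tubes not yet identified as double or self-referential.  Each move either strictly decreases $|D_0\cap D_1|$, converts a pair of intersections into a single double tube, or records a tube as a self-referential disc; since each of these quantities is finite and non-negative, the process terminates and yields $D_1$ in self-referential form with respect to $D_0$.  When $\pi_1(M)=1$ there are no non-trivial double tubes or self-referential discs available, which together with the work of \cite{Ga} recovers Theorem \ref{main}(i).
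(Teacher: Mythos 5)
Your overall strategy --- run the normal-form machinery of \cite{Ga} for the pair $(D_0,D_1)$ and record the obstructions as self-referential discs --- is the right one and is essentially what the paper does, but there is a genuine gap in where you locate the obstruction. You attribute the new phenomenon to the light bulb lemma failing because ``every candidate $\alpha$ is forced to meet the very $3$-ball $B$ cobounded by the tube itself.'' That is the informal picture from the introduction, not the technical failure point. For a sphere $R_1$ meeting an interior dual sphere, $\partial N(z)\cap R_1$ is a circle and every point of it sees its own parallel copy of $G$; for a disc with $G\subset\partial M$ one has $N(G)=G\times I$ and only an \emph{approach interval} $[\pi/2,3\pi/2]\subset\partial D_0$ of such copies. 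This restricts the reordering move for tube guide curves to endpoints that can be permuted within the approach interval, and --- crucially --- it blocks exactly one step of \S 8 of \cite{Ga} (``we can further assume that $q_1\in\partial D_0$''), the step that eliminates the single-tube ($\alpha$-curve) data in the sphere case. Everything else in \S\S 2--8 of \cite{Ga} survives. So the residual object is not a tube detected on the fly by a failed light-bulb path, but a \emph{sector form}: finitely many disjoint sectors along the approach interval, each carrying either double-tube data $(\beta_j,\gamma_j,\lambda_j)$ or single-tube data $(\alpha_i,(p_i,q_i),\tau_i)$; the latter are then converted into self-referential discs by sliding the sphere $P(\alpha_i)$ off $D_0$ and splitting the resulting entangled ball --- note that one $\alpha$-sector can produce \emph{several} self-referential discs, by induction on the number of times the tube passes through the ball, so ``the tube together with a small meridian $3$-ball is one self-referential disc'' is not quite right either.

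A second, related gap is termination. Normalizing the double-tube data --- eliminating the $(\beta_0,\gamma_0,\lambda_0)$ terms, trading a repeated $2$-torsion double tube for single tubes, embedding the $\beta_i,\gamma_i$, and reversing their order when needed --- each \emph{creates new} $\alpha$-type sectors. Your proposed induction on $|D_0\cap D_1|$ together with the number of unprocessed tubes is therefore not obviously well-founded: the quantities you decrease in one step are increased by others. What is needed is an ordering rather than a complexity: first reach sector form, then perform all double-tube normalizations (accepting whatever new $\alpha$-sectors they spawn), and only at the very end convert every $\alpha$-sector into self-referential discs. Without isolating the approach-interval obstruction and sequencing the moves this way, the sketch does not yet deliver the stated conclusion that the residue is exactly finitely many double tubes on distinct nontrivial $2$-torsion classes plus finitely many self-referential discs.
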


Before embarking on the proof we recall the following result which is a rewording of Theorems 1.2 and 1.3 \cite{Ga}.

\begin{theorem}  \label{light bulb} Let $M$ be a 4-manifold such that the embedded spheres $R_0$ and $R_1$ have a  common geometrically dual sphere $G$ and coincide near $G$.  If $R_1$ and $R_0$ are homotopic and $\pi_1(M)$ has no 2-torsion, then they are ambiently isotopic fixing $N(G)$ pointwise.  In general $R_1$ can be ambiently isotoped fixing $N(G)$ pointwise to be in normal form with respect to $R_0$. \end{theorem}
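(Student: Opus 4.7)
The plan is to convert the given homotopy into a controlled movie of geometric moves localized away from $G$, then use the dual sphere itself as a light bulb to cancel as much of the resulting tube configuration as possible. First, I would pick a smooth homotopy $H\colon S^2\times I\to M$ from $R_0$ to $R_1$ that is the identity on a neighborhood of $G$, and put it in generic position. By standard immersion-theoretic techniques the regular-homotopy part of $H$ decomposes into finitely many finger moves (creating intersection pairs) and Whitney moves (removing them). Repackaging this and using that the endpoints are embedded, one can present $R_1$, up to ambient isotopy fixing $N(G)$, as $R_0$ with finitely many small $2$-spheres $P_1,\dots,P_n$ tubed onto it along framed embedded arcs $\tau_1,\dots,\tau_n$ in $M\setminus R_0$. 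Each tube carries a class $g_i\in\pi_1(M)$ based at $R_0\cap G$ and a sign $\sigma_i\in\{\pm\}$ recording coherence of the attachment.

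The next step is to simplify the tube configuration using $G$. The crossing change of Figure~1 lets the head of any tube be slid past another tube along any arc guiding the head through $R_0\cap G$, provided the arc avoids the tube being moved; using $G$ this can be arranged iteratively, first reducing to the case where the bases of all the tubes are positioned near a single point of $R_0$. Pairs of tubes $(T_i,T_j)$ with $g_i=g_j^{-1}$ and compatible signs then cobound a candidate Whitney disc; by repeated applications of the light bulb lemma with guide paths through $G$, this disc can be made embedded and disjoint from $R_0\cup G$, enabling a genuine Whitney move that eliminates the pair. The obstruction to pairing a single tube with itself is exactly that a loop representing $g_i$ bounds an embedded Whitney disc only when $g_i\neq g_i^{-1}$. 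Hence every tube with $g_i^2\neq 1$ can be cancelled, leaving only double tubes labelled by involutions of $\pi_1(M)$, which is the normal form.

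The main obstacle is realizing each cancellation by an ambient isotopy fixing $N(G)$ rather than by a mere regular homotopy. This requires careful bookkeeping of framings on the $\tau_i$, of signs in the light bulb crossing changes, and of secondary intersections between Whitney discs and $R_0,R_1,G$, and other Whitney discs; each such secondary intersection is removed by one more application of the light bulb lemma along a guide path through $G$, so the core argument is a descending induction on the total number of intersection points, with $G$ playing the role of an inexhaustible reservoir of crossing changes. When $\pi_1(M)$ has no $2$-torsion, no nontrivial $g$ satisfies $g^2=1$, so every tube is cancellable and the procedure terminates in an ambient isotopy from $R_1$ to $R_0$ fixed on $N(G)$, giving (i). Statement (ii) is precisely what the procedure leaves behind in the presence of $2$-torsion.
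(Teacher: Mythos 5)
This theorem is not proved in the present paper at all: it is imported as a rewording of Theorems 1.2 and 1.3 of \cite{Ga}, and your outline --- upgrading the homotopy to a regular homotopy via Smale, repackaging the finger and Whitney moves as a presentation of $R_1$ as $R_0$ with tubed-on spheres, using $G$ and the light bulb lemma for crossing changes, and cancelling everything except double tubes carrying nontrivial $2$-torsion --- is essentially the strategy of the proof given there, so the approach matches the cited source. The only caveat is that your stated mechanism for why exactly the involutions survive (``a loop representing $g_i$ bounds an embedded Whitney disc only when $g_i\neq g_i^{-1}$'') is a heuristic rather than the actual argument, which in \cite{Ga} trades a double tube with $g^2\neq 1$ for removable single tubes via an intersection computation with the $\BZ[\pi_1(M)]$-module generated by the lifts of $G$ in the universal cover.
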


\begin{remarks} \label{key point} i) As mentioned in the introduction, since the isotopy fixes $N(G)$ pointwise, I originally thought that this theorem is a result about properly homotopic discs with dual spheres, which seems to contradict the main result of this paper.  


ii) The key point is this.  In the proof of Theorem \ref{light bulb} the dual sphere is repeatedly used to enable various geometric operations.  When $R_1$ is a sphere, $\partial N(G)=S^2\times S^1$.  Therefore, if $z=R_1\cap G$, then through each point of $\partial N(z)\cap R_1$ there is a distinct dual sphere.  On the other hand, when $D_1$ is a disc we assume that $G\subset \partial M$ and so $N(G)=G\times I$.  Here there may only be an interval $[a,b]\subset \partial D_1$ with the property that for $\theta\in [a,b]$, $D_1$ has a distinct dual sphere through $\theta$.  For example, consider the disc $D_1$ of Figure 2.  For most of the proof of Theorem \ref{light bulb} an interval suffices, but near the end, at one crucial spot, we require the whole circle.  See the second paragraph preceding Lemma 8.1 \cite{Ga} where it is stated ``We can further assume that $q_1\in\partial D_0$." Note that when $G\subset S^2\times S^1\subset \partial M$, each point of $\partial D_0$ sees its own dual sphere, so the proofs of \cite{Ga} and \cite{ST} apply to discs without modification.

iii) There is the temptation to push $G$ to $G'\subset\inte(M)$ and use $G'$ as a dual sphere; however, an argument along the lines of  \cite{Ga} requires that $D_1$ be $G'$-inessential, a condition automatic for spheres but not for discs.\end{remarks}

\begin{definition}  Parametrize $\partial D_0=\partial D_1$ by $[0,2\pi]/{\sim}$ and $N(G)\cap \partial M$ as $G\times [\pi/2,3\pi/2]$ so that $\partial D_0\cap (G\times \theta)= \theta$.  Call $[\pi/2,3\pi/2]\subset \partial D_0$ the \emph{approach interval}.  \end{definition}
The proof of Theorem \ref{light bulb} extends essentially directly to the proof of Theorem \ref{srf} until the third paragraph of \S 8. We now elaborate on this extension and then state a result that summarizes what survives for discs.  
\vskip 8pt 
\noindent\emph{Section 2}: The extension is direct.  In particular, the light bulb lemma goes through unchanged.  
\vskip 8pt
\noindent\emph{Section 3}: Not relevant.\vskip 8pt
\noindent\emph{Section 4}: Smale's theorem implies that embedded discs  that are homotopic $\rel \partial$ are properly regularly homotopic $\rel \partial$.  \vskip 8pt

\setlength{\tabcolsep}{60pt}
\begin{figure}
 \centering
\begin{tabular}{ c c }
 $\includegraphics[width=1in]{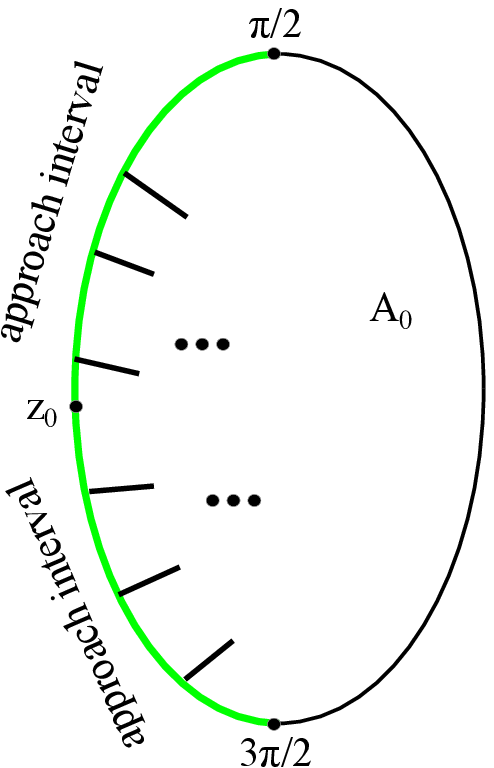}$  
\end{tabular}
 \caption[(a) X; (b) Y]{\label{FigureB,2}
 \begin{tabular}[t]{ @{} r @{\ } l @{}}
Tubed Surface
\end{tabular}}
\end{figure}

\noindent\emph{Section 5}: 1) Definition of \emph{tubed surface}.  Recall that a tubed surface $\mA$ is the data for constructing an embedded surface in $M$.  In the end  of the proof of our Theorem \ref{srf} above the associated surface $A_1$ will be our $D_0$ and the realization $A$ will be our $D_1$.  While stated for closed surfaces, the  definition of a tubed surface applies to compact surfaces with boundary.    For us, $A_0$ is a disc with $\partial A_0$ parametrized by $[0,2\pi]/{\sim}$ where $[\pi/2,3\pi/2]$ is the approach interval, $z_0=\pi\in \partial A_0$ and $f(z_0)=z=A_1\cap G$.  In the closed surface setting we can assume that the $\sigma, \alpha, \beta, \gamma$ tube guide curves approach $z_0\in A_0$ radially.  In the disc setting these curves approach $[\pi/2,3\pi/2]\subset\partial A_0$ transversely and intersect $N(\partial A_0)$ in distinct arcs.  See Figure \ref{FigureB,2}.  That figure shows $\partial A_0$ together with the tube guide curves in a small neighborhood of the approach interval, which is shown in green.

2) Construction of the realization $ A$.  The construction is essentially the same.  Here a tube guide curve $\kappa$ connecting to $\theta\in \partial A_0$ corresponds to a tube paralleling $f(\kappa)\subset A_1$ that connects  to a parallel copy of $G\times \theta$ pushed slightly into $\inte(M)$.  

3) Tube sliding moves.  With one exception all the moves yield isotopic realizations as before.  In the disc setting, the \emph{reordering move} between tube guide curves $\kappa_j, \kappa_k$ requires that the relevant component between their endpoints lies in the approach interval.  

4) Finger and tube locus free Whitney moves.  Same as before.

5) Theorem 5.21.  The proof is the same as before, in particular reordering is not used.

6) Lemma 5.25.  The proof holds since one can permute pairs $(\beta_i, \gamma_i), (\beta_j, \gamma_j)$ that are adjacent in the approach interval.  

Summary:  Except for a restricted reordering move, all the results of Section 5 directly hold.
\vskip 8pt

\noindent\emph{Section 6}:  Direct analogues of all the results of this section hold for discs.  Here are some additional remarks.  

1) Lemma 6.1 holds tautologically since $D_0$ and $D_1$ are homtopic $\rel \partial$.

\begin{notation}\label{sign convention} Sign Convention:  We continue to adopt the orientation convention on $\beta_i, \lambda_i$ and $\gamma_i$ as in that section.  As in 6.3 \cite{Ga} the tube guide curve $\alpha$ corresponds to a sphere $P(\alpha)$ obtained by connecting oppositely oriented copies of $G$ by a tube that parallels $f(\alpha)$.  Orient $\alpha $ so that the copy giving $-[G]$ (resp. $[G])$ is at the negative (resp. positive) end of $f(\alpha)$.  \end{notation}

2) If $\pi:\tilde M\to M$ is the universal covering map, then the components of $\pi^{-1}(D_1\cup G)$ are in natural 1-1 correspondence with elements of $\pi_1(M, z)$ and the components of $\pi^{-1}(G)$ freely generate a $\BZ[\pi_1(M)]$ submodule of $H_2(\tilde M)$, thus the algebra of \S 6 extends to the disc case.

3) In our context the associated surface $A_1$ in the statement of Proposition 6.9 is a disc.  The proof is a direct translation.\vskip8pt

\noindent\emph{Section 7:}  The statement and proof of the crossing change lemma hold as before.\vskip 8pt

\noindent\emph{Section 8}:  The proof holds as before, until the second to last sentence of the third paragraph.  That sentence ``We can further assume that $q_1\in \partial D_0$."  requires that the approach interval is the whole circle.  \vskip 8pt

\setlength{\tabcolsep}{60pt}
\begin{figure}
 \centering
\begin{tabular}{ c c }
 $\includegraphics[width=1.5in]{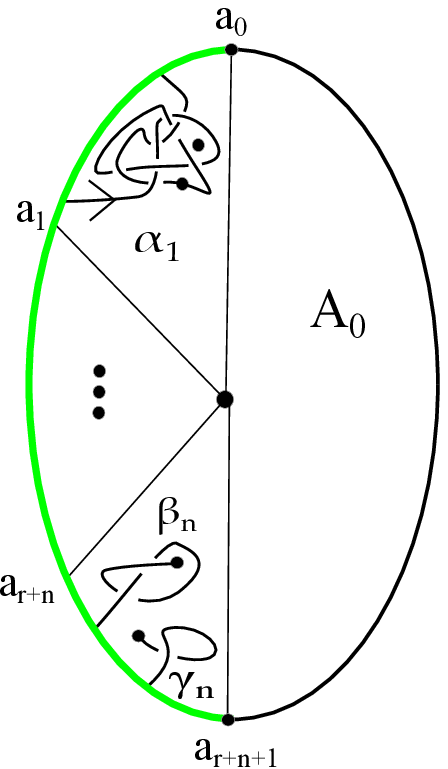}$  
\end{tabular}
 \caption[(a) X; (b) Y]{\label{FigureB,3}
 \begin{tabular}[t]{ @{} r @{\ } l @{}}
Sector Form
\end{tabular}}
\end{figure}

Putting this all together we have the following result.

\begin{proposition} (Sector Form)  \label{whats left} Let $D_0, D_1$ be properly embedded discs in the 4-manifold $M$ such that $D_0$ and $D_1$ coincide near their boundaries and have the dual sphere $G\subset \partial M$.  Then there exists a tubed surface $\mA$ with underlying surface $A_0$ parametrized as the unit disc in $\BR^2$, with $f(A_0)=D_0$ and with realization $A$ isotopic $\rel \partial$ to $D_1$.  
$\mA$ has data  $(\alpha_1,(p_1,q_1), \tau_1), \cdots, (\alpha_r,(p_r, q_r), \tau_r),(\beta_0, \gamma_0, \lambda_0), (\beta_1, \gamma_1,\lambda_1), \cdots, (\beta_n, \gamma_n, \lambda_n))$.

Each each of these data sets lie in distinct sectors of $A_0$.  This means that there exists linearly  ordered $a_0=\pi/2, a_1, \cdots, a_{r+n+1}=3\pi/2 \subset \partial A_0$ such that $(\alpha_i, (p_i,q_i))\subset$ the sector defined by $(a_{i-1},a_i, 0)$ and 
$(\beta_j,\gamma_j)$ lies in the sector defined by $(a_{r+j}, a_{r+j+1},0)$ with $\beta_j\cap \gamma_j=\emptyset$.  \emph{See Figure \ref{FigureB,3}}.  \end{proposition}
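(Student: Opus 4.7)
The plan is to run the normal-form machinery of \cite{Ga} (Sections 4--8), translated to discs as in the preceding section-by-section digest, and then to use the linear order on the approach interval $[\pi/2,3\pi/2]$ to upgrade the resulting normal form to sector form. First, apply the disc version of Smale's theorem to make $D_0$ and $D_1$ properly regularly homotopic rel $\partial$. Then run the disc analogues of Sections 5--7 of \cite{Ga}, in particular the translations of Proposition 6.9 and of the crossing-change lemma, to realize (up to isotopy rel $\partial$) the disc $D_1$ as the realization $A$ of a tubed surface $\mA$ with $A_1=D_0$, with $A_0$ the unit disc in $\BR^2$ and $f(A_0)=D_0$, and with data consisting of self-referential tube-guide triples $(\alpha_i,(p_i,q_i),\tau_i)$ together with double-tube triples $(\beta_j,\gamma_j,\lambda_j)$ satisfying $\beta_j\cap\gamma_j=\emptyset$. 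The disjointness $\beta_j\cap\gamma_j=\emptyset$ is the disc translation of Lemma 5.25 of \cite{Ga}, which the digest confirms still holds.

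Second, because $G\subset\partial M$, every tube-guide curve attaches to $\partial A_0$ transversely inside the approach interval, hitting the collar $N(\partial A_0)$ in disjoint arcs as in Figure \ref{FigureB,2}. These attachment arcs inherit a linear order from $[\pi/2,3\pi/2]$. Apply the restricted reordering move of adapted Section 5 --- which permits permutation of two data sets precisely when the relevant component of $\partial A_0$ between their endpoints lies in the approach interval --- to put the self-referential data sets $(\alpha_i,(p_i,q_i),\tau_i)$ first, followed by the double-tube triples $(\beta_j,\gamma_j,\lambda_j)$, in the order recorded in the statement. Choose separating points $a_0=\pi/2<a_1<\cdots<a_{r+n+1}=3\pi/2$ lying strictly between consecutive attachment arcs, and let the $i$-th sector be the closed wedge of $A_0$ bounded by the two radii to $a_{i-1}$ and $a_i$. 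A further ambient isotopy of $A_0$ rel $\partial A_0$, which by the adapted tube-sliding moves produces an isotopic realization, then compresses each data set into its designated sector.

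The main obstacle is precisely the restriction identified in Remark \ref{key point}: the sphere argument uses cyclic freedom on $\partial A_0$, but for discs only permutations through the approach interval are allowed. This is resolved by the observation that all tube-guide attachments are already forced into $[\pi/2,3\pi/2]$, so only linear, not cyclic, reorderings are ever required, and these are exactly what the restricted reordering move supplies. A secondary bookkeeping issue is arranging that the framings $\tau_i$ and the self-intersection pairs $(p_i,q_i)$ fit into the same sector as their $\alpha_i$; this follows from the local nature of these data together with the finger and tube-locus-free Whitney moves of adapted Section 5. Collecting these outputs yields the sector form asserted.
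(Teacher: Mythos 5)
Your proposal follows essentially the same route as the paper: the paper's ``proof'' of sector form is precisely the preceding section-by-section adaptation of the machinery of \cite{Ga} (Smale's theorem from \S 4, tubed surfaces with the reordering move restricted to the approach interval from \S 5, the disc versions of Proposition 6.9 and the crossing-change lemma, and the portion of \S 8 up to the sentence ``We can further assume that $q_1\in\partial D_0$''), assembled into the stated conclusion exactly as you describe, with the linear order on the approach interval supplying the sectors. One small correction of emphasis: the obstruction of Remark \ref{key point} is not the reordering restriction but that single step in \S 8 of \cite{Ga}, and it is not \emph{resolved} here but \emph{avoided} --- its failure for discs is exactly why the $(\alpha_i,(p_i,q_i),\tau_i)$ data survives into sector form rather than being eliminated as in the sphere case.
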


\begin{lemma}  \label{normal permutation} The data of the various sectors can be permuted without changing the isotopy class of the realization.  \end{lemma}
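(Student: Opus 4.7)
The plan is to reduce to transpositions of adjacent sectors and then invoke the restricted reordering move from Section 5, whose validity in the disc setting was recorded in the summary preceding Proposition \ref{whats left}. Since the symmetric group is generated by adjacent transpositions, it suffices to show that any two adjacent sectors, say those indexed by $k$ and $k+1$, can be exchanged without changing the isotopy class of the realization $A$.

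Fix such a pair of adjacent sectors and list their tube guide curves, ordered by where they meet $\partial A_0$: let $\kappa_1,\dots,\kappa_s$ denote those of sector $k$ (either a single $\alpha$ or the pair $\beta,\gamma$) and $\kappa_1',\dots,\kappa_t'$ those of sector $k+1$. By the sector form hypothesis, every endpoint of every $\kappa_i$ and $\kappa_j'$ lies in the interval $[a_{k-1},a_{k+1}]\subset[\pi/2,3\pi/2]$, i.e.\ in the approach interval. I would then run a bubble-sort: first swap $\kappa_s$ with $\kappa_1'$, then with $\kappa_2'$, and so on, until $\kappa_s$ has crossed past all of $\kappa_1',\dots,\kappa_t'$; then do the same with $\kappa_{s-1}$, and continue. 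For each elementary swap between a $\kappa_i$ and a $\kappa_j'$ that are adjacent along $\partial A_0$, the component of $\partial A_0$ between their endpoints is a subarc of the approach interval, so the restricted reordering move of Section 5 applies and produces an isotopic realization.

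The auxiliary data are carried along with their tube guide curves. For an $\alpha$-sector the points $(p_i,q_i)$ and the arc $\tau_i$ are supported in a small neighborhood of $\alpha_i$, and for a $(\beta_j,\gamma_j)$-sector the Whitney disc $\lambda_j$ is supported near $\beta_j\cup\gamma_j$; in both cases the ambient isotopy implementing a reordering move may be chosen in a small neighborhood of the two guide curves being swapped, and so it transports this auxiliary data coherently. The guide curves belonging to the other sectors meet $\partial A_0$ outside $[a_{k-1},a_{k+1}]$ and can be kept disjoint from the support of the isotopy, so the remaining sectors are unaffected.

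The point that needs care is verifying that every intermediate configuration remains in sector form, so that the restricted reordering move remains legitimate at each step. This holds because every elementary swap occurs between two guide curves whose endpoints already lie in $[a_{k-1},a_{k+1}]\subset [\pi/2,3\pi/2]$, so the arc of $\partial A_0$ between them automatically lies in the approach interval; and because the swap is supported in a neighborhood of those two curves, the endpoints of all other guide curves are preserved. After all $s\cdot t$ swaps, sectors $k$ and $k+1$ have been exchanged and the isotopy class of the realization is unchanged.
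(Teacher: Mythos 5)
Your proof is correct and follows essentially the same route as the paper: the paper's argument is simply that adjacent sectors can be permuted via the (restricted) tube-sliding/reordering moves, which is exactly your reduction to adjacent transpositions implemented curve-by-curve in the approach interval, while preserving the internal order of each sector's data. Your additional care about intermediate configurations and about not permuting $\beta_i$ with $\gamma_i$ within a sector matches the paper's explicit caveat.
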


\begin{proof} Using the tube sliding operations any two adjacent pairs  $(\alpha_i,(p_i,q_i), \tau_i)$, $(\beta_j, \gamma_j, \lambda_j)$, i.e. two of one type or one of each type, in the approach interval  can be permuted, but we cannot \emph{permute} data within a given sector, i.e. the $\beta_i$ and $\gamma_i$ curves.  \end{proof} 

\begin{definition}  A tubed surface $\mA$ with data as in  Proposition \ref{whats left} is said to be in \emph{sector form}.  Let $\mA$ be a tubed surface in sector form.  Let $\lambda$ be a framed embedded path in $M$ with disjoint embedded tube guide curves $\beta$ and $\gamma \subset A_0$, all oriented with the above sign convention.  We denote the pair $(\beta,\gamma)$ as $+ (\beta,\gamma)$ (resp. $- (\beta,\gamma))$ if $\beta$ appears before (resp. after) $ \gamma$ in the approach interval.  Call an embedded $\alpha $ curve $+$ (resp. $-$) if the negative (resp. positive) end of $\alpha$ appears before the positive (resp. negative) end in the approach interval. \end{definition}

\begin{definition}\label{sr form}  We say that the tubed surface $\mA$ is in \emph{self-referential form} with data $(\lambda_1, \lambda_2, \cdots, \lambda_n, \sigma_1 g_1, \cdots, \sigma_k g_k)$ if

a) The immersion $f:A_0 \to M$ is a proper embedding with $f(A_0)=A_1$ a 2-disc with dual sphere $G\subset \partial M$.

b) The paths $\beta_1, \gamma_1, \cdots, \beta_n, \gamma_n, \sigma_1\alpha_1, \cdots, \sigma_k\alpha_k$ are embedded and linearly arrayed along the approach interval, where $\sigma_i\in \pm$ and $+\alpha_i$ (resp. $-\alpha_i$) denotes that its negative (resp. positive) end is closer to $\pi/2$ than its positive end.     The point $q_i$ associated to $\alpha_i$ lies in the half disc bounded by $\alpha_i$ and the approach interval. 

c) The framed embedded paths $\lambda_1, \lambda_2, \cdots, \lambda_n$ represent distinct nontrivial 2-torsion elements of $\pi_1(M)$.   

d) Each $g_i$ represents a non trivial element of $\pi_1(M, z_0)$ and no $i,j$ is $\sigma_i g_i=-\sigma_j g_j$.  

We say that the disc $D_1$ is in \emph{self-referential form} with data $(\lambda_1, \lambda_2, \cdots, \lambda_n, \sigma_1 g_1, \cdots, \sigma_k g_k)$ with respect to the disc $D_0$ if $D_1$ is the realization of the tubed surface $ \mA $ with this data where $A_1=D_0$.  \end{definition}

We now show the key connection between the formal definition and the earlier one for self-referential form.

\begin{lemma} \label{alpha to srf} If $D_1$ is in self-referential form with respect to $D_0$ with data $(\lambda_1, \lambda_2, \cdots, \lambda_n,\newline \sigma_1 g_1, \cdots, \sigma_k g_k)$ and $D'_0$ is in self-referential form with respect to $D_0$ with data $(\lambda_1, \lambda_2, \cdots,  \lambda_n)$, then $D_1$ is isotopic to the surface obtained from $D'_0$ by attaching the self-referential discs associated to the data $(\sigma_1 g_1, \cdots, \sigma_k g_k)$.\end{lemma}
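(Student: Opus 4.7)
The plan is to exploit the sector form structure of $\mA$ to perform the realization construction in two stages. By Proposition \ref{whats left}, the triples $(\beta_j,\gamma_j,\lambda_j)$ and the data $(\alpha_i,(p_i,q_i),\tau_i)$ lie in pairwise disjoint sectors of $A_0$. The associated geometric operations on $D_0$ — double tubing for the $\lambda_j$'s and self-referential tubing via the spheres $P(\alpha_i)$ for the $\alpha_i$'s — are supported in disjoint neighborhoods of $D_0$ together with parallel copies of $G$ pushed into $\inte(M)$ along disjoint regions of a collar of $\partial M$. Consequently the order in which they are performed does not change the isotopy class of the realization, so I would partition the construction into the $n$ $\lambda$-operations first and the $k$ $\alpha$-operations second.

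Performing only the $\lambda$-triples produces, by Definition \ref{sr form}, a surface in self-referential form with respect to $D_0$ with data $(\lambda_1,\ldots,\lambda_n)$. By hypothesis this surface is isotopic rel $\partial$ to $D'_0$, so after an isotopy I may take the intermediate stage to be $D'_0$ itself.

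Starting from $D'_0$, I would then apply the $k$ $\alpha$-operations in turn. For each $i$, the tubed surface construction tubes a small disc near $p_i\in \inte(A_0)$ to the sphere $P(\alpha_i)$, which by Notation \ref{sign convention} is formed by joining two oppositely-oriented parallel push-offs of $G$ with a thin tube along $f(\alpha_i)$. This $P(\alpha_i)$ bounds an evident embedded 3-ball $B_i\subset \inte(M)$, obtained as two 3-ball caps on the parallel push-offs joined by the 1-handle supplied by the tube. Because the $\alpha_i$-sector is disjoint from each $\lambda_j$-sector, $B_i$ can be chosen disjoint from $D'_0$, and the short attaching arc $\tau_i$ from $\inte(D'_0)$ to $P(\alpha_i)$ meets $B_i$ transversely in one point. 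This is exactly the attachment of a self-referential disc to $D'_0$ in the sense of the definition preceding Figure \ref{FigureB,1}, with $S_0=D'_0$, $B=B_i$, and attaching arc $\tau_i$.

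It then remains to match the attachment data with $\sigma_i g_i$. By Definition \ref{sr form}, the curve $\alpha_i$ together with the point $q_i$ in the half-disc bounded by $\alpha_i$ and the approach interval records the class $g_i\in \pi_1(M,z_0)$: concatenating $\tau_i$ with a return path through the $\alpha_i$ half-disc and back along $f(\alpha_i)$ produces a loop representing $g_i$. The sign $\sigma_i$, recording whether the negative or positive end of $\alpha_i$ is closer to $\pi/2$ in the approach interval, translates via Notation \ref{sign convention} into the choice of coherent versus non-coherent tubing to the oriented sphere $P(\alpha_i)$. After all $k$ attachments we obtain precisely the surface described in the statement. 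The main subtlety, and the step I would treat most carefully, is the sign and $\pi_1$ bookkeeping of this last paragraph — a direct but unavoidable translation between the $\alpha$-encoding of Notation \ref{sign convention} and the parametrisation of self-referential discs by $\sigma g\in \pm(\pi_1(M)\setminus 1)$.
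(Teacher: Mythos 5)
Your two-stage decomposition (do the $\lambda$-triples first, then the $\alpha$-sectors one at a time) is fine and matches the paper's induction on the number of $\alpha$ curves. The gap is in your treatment of a single $\alpha$-sector. You assert that $P(\alpha_i)$ ``bounds an evident embedded 3-ball $B_i$ \dots disjoint from $D'_0$'' and that the attaching arc ``meets $B_i$ transversely in one point,'' but neither assertion holds for the realization as constructed, and the second one --- the single transverse intersection of the tube with the ball, which is the entire content of ``self-referential'' --- is never derived. As built, $P(\alpha_i)$ consists of two oppositely oriented parallel push-offs of $G$ joined by a tube paralleling $f(\alpha_i)\subset D_0$; since each push-off of $G$ is geometrically dual to $D_0$, the sphere $P(\alpha_i)$ meets $D_0$ in two oppositely signed points, so it certainly bounds no ball disjoint from $D'_0$, and the region $G\times[-.5,.5]$ together with the connecting $1$-handle is not a $3$-ball in any case. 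The paper's proof consists precisely of the step you skip: one must isotope (``slide'') $P(\alpha_1)$ off of $D_0$, and it is this sliding that entangles the tube connecting $D_0$ to $P(\alpha_1)$ with the resulting unknotted sphere and creates the one transverse intersection of the attaching arc with the $3$-ball.

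Moreover, this works, and yields the datum $\sigma_1 g_1$, only because of the hypothesis in Definition \ref{sr form} b) that $q_1$ lies in the half-disc bounded by $\alpha_1$ and the approach interval --- a hypothesis your argument never invokes. That it is essential is visible in the proof of Lemma \ref{alpha stabilization}: when $q_s$ lies outside that half-disc, the sphere slides off \emph{without} entangling the tube, and the $(\alpha_s,(p_s,q_s),\tau_s)$ data can simply be deleted without changing the isotopy class of the realization. Your argument, as written, would produce a self-referential disc in that case too, which is wrong. The final paragraph on matching $\sigma_i$ and $g_i$ is reasonable in outline, but it cannot be completed until the intersection of the arc with the ball has actually been produced by the sliding isotopy.
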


\begin{proof} Since $q_1$ lies to the approach interval side of $\alpha_1$ sliding the sphere $P(\alpha_1)$ off of $D_0$ entangles the tube connecting $D_0$ to $P(\alpha_1)$ to create a self-referential disc of the type claimed.  See Figures \ref{Figure4,3} to \ref{Figure4,5}.  The result follows by induction on the number of $\alpha$ curves.\end{proof}


\setlength{\tabcolsep}{60pt}
\begin{figure}
 \centering
\begin{tabular}{ c c }
 $\includegraphics[width=5in]{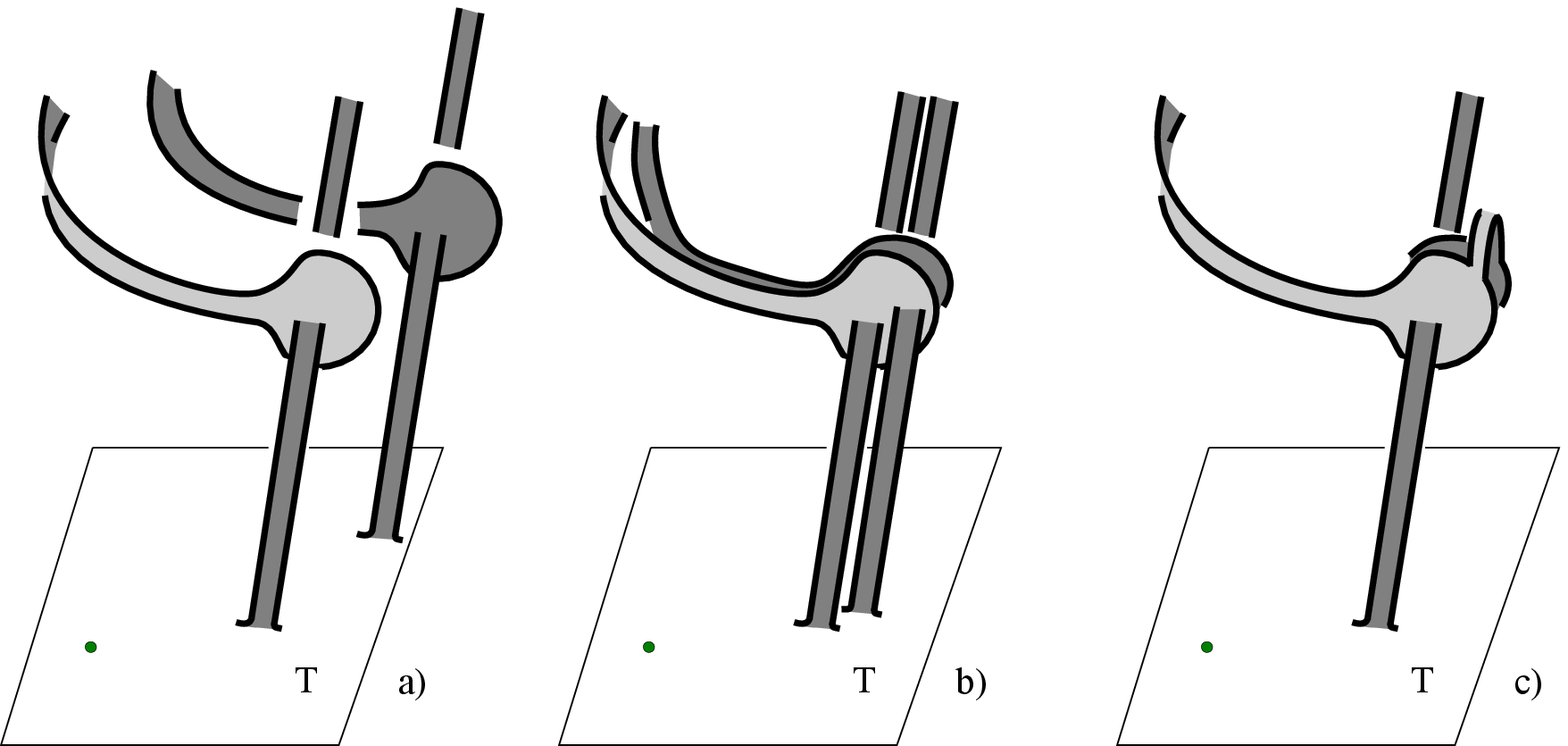}$  
\end{tabular}
 \caption[(a) X; (b) Y]{\label{FigureB,4}
 \begin{tabular}[t]{ @{} r @{\ } l @{}}
$D_g+D_{-g}=D_0$
\end{tabular}}
\end{figure}

\begin{lemma}\label{alpha cancellation} An embedded surface $T$ with dual sphere $G$ is isotopic to the surface $T'$ obtained from $T$ by tubing self-referential discs of type $g, -g$.\end{lemma}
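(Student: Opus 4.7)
The plan is to isotope the two self-referential attachments into parallel position and then cancel them in a standard local model. Call the two self-referential discs $\Sigma_+$ and $\Sigma_-$, constructed along paths $\tau_+,\tau_-$ (both representing $g\in\pi_1(M,z_0)$) to small spheres $P_\pm=\partial B_\pm$ bounding disjoint $3$-balls, with coherences $+$ and $-$ respectively.

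The first step is to arrange the two data sets to be parallel. Since $T$ has a dual sphere, the inclusion $M\setminus T\hookrightarrow M$ is a $\pi_1$-isomorphism, so $\tau_+$ and $\tau_-$ are (based) homotopic in $M\setminus T$. Realize such a homotopy as a generically immersed annulus $H$ in $M$; its self-intersections and its intersections with $T$ are finite transverse double points. Applying the Light Bulb Lemma of \cite{Ga} repeatedly, each such double point can be tubed into the dual sphere $G$, converting $H$ into an embedded annulus $H'\subset M\setminus T$. Simultaneously isotope $B_+$ and $B_-$ to be the two ends of an $I$-bundle neighborhood of $H'$, so that $B_\pm$ become parallel copies of a single $3$-ball $B$ and $\tau_\pm$ become parallel copies of a single framed path $\tau$.

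The second step is a purely local cancellation. In a regular neighborhood $N$ of $\tau\cup B$ together with a thin flange on $T$ at the two attachment discs, the combined surface $T'\cap N$ consists of two parallel tubes from $T$ running to two parallel copies of $P=\partial B$, attached with opposite coherence. In the obvious product model $N\cong D^3\times I$ with $B$ a fiber, the opposite coherence means the two bump-spheres carry opposite induced orientations, so together with the two tubes they form an oriented sphere-with-two-handles that bounds in $N$ rel $T$. The explicit isotopy that cancels them is the one that sweeps the ``outer'' bump $P_+$ through $B$ across to meet the oppositely-oriented $P_-$, after which the resulting oriented cobordism collapses to a bi-collared copy of $T$, i.e.\ to $T$ itself. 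This is formally the same cancellation used for double tubes in \S5 of \cite{Ga}.

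The main obstacle I anticipate is bookkeeping in the first step, namely verifying that the $\sigma=\pm$ convention of Definition \ref{sr form} really does translate, once the configurations are made parallel, into oppositely oriented parallel copies of $P$ in the local model; this is a sign check in the normal framing that follows from the definition of coherence in the remarks after the Definition of self-referential discs. The tubing-into-$G$ moves in Step~1 are already standard and the local isotopy of Step~2 is a routine picture in a $4$-ball.
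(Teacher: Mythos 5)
Your Step 2 contains the essential gap. The defining feature of a self-referential disc is that the tube $\tau_\pm$ passes \emph{through} the $3$-ball $B_\pm$ bounded by the sphere it attaches to, and this linking is precisely the obstruction that makes a \emph{single} such disc non-cancellable: $P$ already bounds the ball $B$ disjoint from $T$, so if one could ``sweep the bump across $B$'' one would absorb any self-referential disc, contradicting the main theorems of the paper. Your local model describes ``two parallel tubes from $T$ running to two parallel copies of $P$'' with no mention of the tubes puncturing the balls, and the proposed isotopy that sweeps $P_+$ across to meet $P_-$ would drag $P_+$ through the tubes (which are part of the embedded surface $T'$), so it is not an isotopy as stated. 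Likewise ``bounds in $N$ rel $T$'' is a homological statement, not an isotopy. The sign hypothesis $(+g,-g)$ must enter exactly at the point where this linking is undone, and in your write-up it only enters as an orientation bookkeeping remark. The paper's proof resolves the linking by a specific \emph{tube slide}: after two applications of the light bulb lemma (which are what legitimize the crossing changes with $T$ during the move), one tube is slid over the other sphere so that the two oppositely oriented spheres become joined into a single sphere bounding a ball disjoint from the rest of $T'$, and only then do ``the spheres cancel.'' That slide, or an equivalent move, is missing from your argument.

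A secondary point: your Step 1 (realizing a homotopy of $\tau_+$ to $\tau_-$ by an immersed annulus, embedding it via the light bulb lemma, and making the two ball-and-tube configurations parallel) is a heavier route than the paper takes and creates its own unaddressed linking problem, since once $B_+$ and $B_-$ are fibers of an $I$-bundle over a common annulus, the outer tube will generically also puncture the inner ball; you would need to track these extra intersections through Step 2. The paper avoids parallelization entirely, keeping the two data sets in separate sectors (Proposition \ref{whats left} and Lemma \ref{normal permutation}) and working directly with the picture in Figure \ref{FigureB,4}. Your overall strategy (use the dual sphere to rearrange, then cancel oppositely oriented spheres) is the right shape, but as written the proof would ``cancel'' a single self-referential disc just as readily, so the key step is not yet there.
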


\begin{proof} See Figure \ref{FigureB,4}.  Figure \ref{FigureB,4} a) shows $T$ with self-referential discs of type $g, -g$.  The green dot denotes intersection with a geometrically dual sphere, which is on $\partial T$, when $T$ is a disc.  Two applications of the light bulb lemma enable the isotopy to Figure \ref{FigureB,4} b).  Figure \ref{FigureB,4} c) is after sliding one of the tubes.  Since the spheres now cancel, that surface is isotopic to $T$ itself.\end{proof}

\begin{definition} We say that the embedded surface $T$ is obtained from the embedded surface $S$ by \emph{tubing a sphere $P$ along $\tau$}, if $P$ bounds a 3-ball disjoint from $S$ and $T$ is obtained by tubing $S$ and $P$ along a framed embedded path $\tau$.\end{definition}

\begin{lemma} \label{stabilization} Let $S$ be an embedded surface with dual sphere $G$.  If the surface $T$ is obtained from $S$ by tubing a sphere $P$ along $\tau$, then $T$ is isotopic to a surface obtained from $S$ by attaching finitely many self-referential discs.\end{lemma}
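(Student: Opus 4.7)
The plan is to induct on $m$, the number of maximal subarcs of $\tau$ contained in $\inte(B)$, where $B$ denotes the $3$-ball bounded by $P$ (which is disjoint from $S$ by hypothesis).

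For the base case $m = 0$, the path $\tau$ is disjoint from $\inte(B)$, so it meets $P$ only at $\tau(1)$. Then $B \cup \tau$ is a $3$-ball with a boundary arc attached, hence itself a $3$-ball, so its regular neighborhood $N \subset M$ is a smooth $4$-ball that meets $S$ in a small disk. Inside $N$ the configuration $(B, \tau)$ is unknotted up to isotopy, so I would ambient-isotope $B$ to a tiny ball near $\tau(0) \in S$, dragging $\tau$ to a short path along the way. The resulting tubing is a small local perturbation of $S$ that is manifestly isotopic to $S$ itself, giving the $m = 0$ case of the conclusion.

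For the inductive step ($m \geq 1$), I would peel off a single self-referential disc. Pick a maximal subarc $\gamma \subset \inte(B)$ of $\tau$ with both endpoints on $P$ (or with $\tau(1)$ as one endpoint). Take a tight regular neighborhood $N_\gamma$ of $\gamma$ and push it off $B$ slightly to obtain a $3$-ball $B_\gamma \subset \inte(M)$, chosen disjoint from $S$, from $P$, and from the rest of $\tau$. The sphere $\partial B_\gamma$ will serve as the sphere of a self-referential disc whose defining path tracks $\tau$ from $S$ to a point near $\gamma$ and enters $B_\gamma$ once. Using that $P$ is a $2$-sphere (so the tube attachment point on $P$ can be freely slid) together with the light-bulb lemma enabled by the dual sphere $G$, I would isotope $T$ so that the tube running along $\gamma$ is replaced by the tube into $B_\gamma$ described above, leaving a residual tubing of $S$ to $P$ along a path $\tau'$ with $m-1$ maximal subarcs in $\inte(B)$. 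Induction applied to this residual tubing, together with the self-referential disc just peeled off, completes the argument.

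The main obstacle is making the peel-off step precise as an ambient isotopy. One must verify that the auxiliary $3$-ball $B_\gamma$ can be chosen genuinely disjoint from $S$, $P$, and the remainder of $\tau$, that the resulting residual tubing is as claimed (this is where the freedom to slide tube attachments on the sphere $P$ is critical), and that the framing and the sign/orientation data $\sigma$ associated with the peeled self-referential disc are well determined by the local picture near $\gamma$. The dual sphere $G$, via repeated application of the light-bulb lemma as in the analogous isotopies earlier in Section 2, is what makes these rearrangements possible.
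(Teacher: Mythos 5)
There are two problems here, one cosmetic and one genuine. The cosmetic one: in a $4$-manifold the $3$-ball $B$ bounded by $P$ has codimension one, so a generic $\tau$ meets $B$ transversely in a finite set of \emph{points}, not in subarcs contained in $\inte(B)$. Your induction parameter should therefore be $k=|\inte(\tau)\cap B|$, the number of such points (this is exactly the parameter the paper inducts on); with that reinterpretation your base case $k=0$ goes through essentially as you describe.

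The genuine gap is the peel-off step. Your auxiliary ball $B_\gamma$ is a regular neighborhood of a piece of $\tau$ pushed off of $B$, chosen disjoint from $S$, $P$ and the rest of $\tau$; its boundary sphere $\partial B_\gamma$ is therefore not part of the surface $T$, and no ambient isotopy can make $T$ acquire a tube into a brand-new sphere summand while leaving the original tubing to $P$ intact --- that changes the connected-sum structure of the embedded surface rather than isotoping it. The sphere of the peeled-off self-referential disc has to come from $P$ itself, and the paper's proof supplies precisely this mechanism: squeeze $B$ into two balls $B_1$, $B_2$ with $|\tau\cap B_1|=1$, $|\tau\cap B_2|=k-1$ and $\partial\tau\cap B\subset B_2\setminus B_1$, which realizes $P$ as $P_1=\partial B_1$ tubed to $P_2=\partial B_2$ along a neck $\tau_1$ (an honest isotopy of $T$). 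One then slides $\tau_1$ off of $P_2$ and down the tube $\tau$ so that it attaches $P_1$ directly to $S$; by construction $\tau_1$ links $B_1$ exactly once, so this piece is a self-referential disc, and after using the light bulb lemma to unlink $\tau$ from $P_1$ and $\tau_1$ from $P_2$, what remains is $S$ tubed to $P_2$ along a path meeting $B_2$ in $k-1$ points, to which induction applies. Your instincts about sliding tube attachments on the sphere and invoking the light bulb lemma are the right supporting moves, but without the splitting of $P$ into two spheres joined by a slidable neck there is nothing for those moves to act on, and the step cannot be made into an isotopy as written.
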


\begin{proof} If $P=\partial B$ and $|B\cap \tau|=k$, then squeeze $B$ into two balls $B_1, B_2$ so that $|\tau\cap B_1|=1, |\tau\cap B_2|=k-1$ and $(\partial \tau\cap B)\subset B_2\setminus B_1$.  If $P_i=\partial B_i$, then we can further assume that $P_1$ is connected to $P_2$ by a tube $ \tau_1$ disjoint from $\tau$.   Use $\tau$ to slide $\tau_1$ off of $P_2$ so that now $\tau_1$ connects $P_1$ with $S$.  Here we abused notation by identifying the framed embedded path $\tau$ with its corresponding tube.  By construction $\tau_1$ will link $P_1$ exactly once.  Next, use the light bulb lemma to unlink $\tau_2$ from $P_1$ and $\tau_1$ from  $P_2$.  The result follows by induction on $k$.  \end{proof}

\begin{lemma} \label{alpha stabilization}  Let $\mA$ be a tubed surface in sector form containing a sector $J$ with data $(\alpha_i,(p_i,q_i), \tau_i)$.  There exists another tubed surface $\mA'$ with isotopic realizations whose data agrees with that of $\mA$ except that the $(\alpha_i,(p_i,q_i), \tau_i)$ data has been deleted and the sector $J$ has been subdivided into finitely many sectors each of which contains data of the form $(\sigma_s\alpha_s,(p_s,q_s), \tau_s))$ where $\alpha_s$ is embedded and $q_s$ lies in the halfdisc bounded by $\alpha_s$ and the approach interval.\end{lemma}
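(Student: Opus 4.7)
The plan is to handle two potential obstructions in sequence: $\alpha_i$ may have self-intersections in $A_0$, and $q_i$ may lie on the side of $\alpha_i$ opposite to the approach interval. Both modifications will be localized to the sector $J$, so the data in the other sectors of $\mA$ and the isotopy class of the realization away from $J$ are unaffected.

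First I would reduce to the embedded case. After homotoping $\alpha_i$ in $A_0$ rel endpoints into general position, it has finitely many transverse self-intersections in $\inte(A_0)$. At each such crossing $x$, I would apply a local splitting move that cuts and reconnects $\alpha_i$ in the alternative way, producing two shorter tube-guide arcs $\alpha_i', \alpha_i''$ with strictly fewer total self-intersections. To promote this combinatorial split to an actual isotopy of the realization in $M$, I introduce near $x$ a cancelling pair of sphere-tubes of the kind provided by Lemma \ref{alpha cancellation}; by that lemma the pair contributes nothing up to isotopy, and redistributing its two tubes onto the two sides of the split yields a new configuration whose realization is isotopic to the original but in which the number of $\alpha$-self-intersections has dropped. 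Iterating finitely many times, $\alpha_i$ is replaced by a disjoint collection of embedded tube-guide arcs $\alpha_1, \ldots, \alpha_k$, each with endpoints on the approach interval and with Whitney data $(p_s, q_s, \tau_s)$ inherited from the original triple together with the auxiliary cancelling pairs.

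Next, for each embedded $\alpha_s$ I would check on which side of $\alpha_s$ the point $q_s$ lies. If $q_s$ sits in the half disc whose boundary arc on $\partial A_0$ misses the approach interval, I perform a tube-slide of $\tau_s$ across the $\alpha_s$-tube; this is one of the tube-sliding moves from Section 5 of \cite{Ga}, adapted to the disc setting as summarized in the list before Proposition \ref{whats left}, and amounts geometrically to pushing the Whitney pair across the sphere $P(\alpha_s)$. I then choose the sign $\sigma_s$ so that the orientation convention of Notation \ref{sign convention} is respected by the resulting configuration. Finally, I select points $a_{i-1} = b_0 < b_1 < \cdots < b_k = a_i$ on the approach interval that separate the endpoints of the different $\alpha_s$, thereby subdividing $J$ into $k$ subsectors each carrying exactly one triple $(\sigma_s \alpha_s, (p_s, q_s), \tau_s)$.

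The main obstacle is the first step: confirming that the $A_0$-level splitting move at a double point, combined with the insertion of a cancelling pair of spheres, yields an isotopic realization in $M$. This requires checking that the framings on the new tubes match those prescribed by Notation \ref{sign convention} and that the cancellation afforded by Lemma \ref{alpha cancellation} is compatible with the local splitting pattern at $x$; the tube-sliding and reordering apparatus from Section 5 of \cite{Ga} (which transfers to the disc setting subject to the approach-interval constraint noted above) is designed precisely for this kind of bookkeeping. Once that is secure, the side adjustment of $q_s$ and the final sector subdivision are essentially routine.
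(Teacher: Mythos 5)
There is a genuine gap, and it concerns the mechanism by which the single datum $(\alpha_i,(p_i,q_i),\tau_i)$ gets subdivided into several sectors. In the paper the multiplicity does \emph{not} come from self-crossings of $\alpha_i$: one first makes $\alpha_i$ embedded (monotonically increasing) by the crossing change Lemma 7.1 of \cite{Ga}, then slides $P(\alpha_i)$ off of $A_1$ to exhibit the realization $A$ as the surface $S$ (the realization with this datum deleted) tubed to an unknotted sphere $P_i$ whose bounding ball is crossed $k$ times by the tube $\tau_i$. It is Lemma \ref{stabilization} --- squeezing that ball into pieces and sliding tubes with the light bulb lemma --- that converts this single entangled tube into $k$ self-referential discs, and reversing Lemma \ref{alpha to srf} turns those back into $k$ sectors of $(\sigma_s\alpha_s,(p_s,q_s),\tau_s)$ data. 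Your proposal never invokes Lemma \ref{stabilization} and never confronts the entanglement of $\tau_i$ with the ball bounded by $P_i$; in the case where $\alpha_i$ happens to be already embedded your argument would output a single sector, which is not what the lemma asserts and not what is true.

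Two further points. First, your ``splitting move'' at a self-crossing of $\alpha_i$ is not well defined: $\alpha_i$ is a single arc with both ends on the approach interval, and resolving a transverse self-crossing the other way produces an arc together with a closed loop, which is not admissible tube guide data; the correct tool for removing these crossings is the crossing change lemma, which changes over/under crossings of tube guide curves without altering the realization, not a resolution of the crossing. Second, for the final condition on $q_s$, the paper does not slide the Whitney data across $P(\alpha_s)$; it observes that when $q_s$ lies outside the half disc bounded by $\alpha_s$ and the approach interval the whole datum can simply be deleted without changing the isotopy class of the realization (the tube slips off). Your proposed tube-slide is unjustified and would in any case have to be checked against the sign convention and the group element it represents.
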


\begin{proof} By the crossing change Lemma 7.1 \cite{Ga} we can assume that $\alpha_i$ is monotonically increasing.  Sliding $P(\alpha_i)$ off of $A_1$ as in the proof of Lemma \ref{alpha to srf} we obtain an unknotted 2-sphere $P_i$, which is entangled with $ \tau_i$.  If $S$ denotes the realization of the tubed surface $\mA$ with the data $(\alpha_i,(p_i,q_i), \tau_i)$ deleted, it follows that the realization $A$ of $\mA$ is obtained by tubing $S$ to the sphere $P_i$.  By Lemma \ref{stabilization} $A$ is isotopic to a surface obtained by adding self-referential discs to $S$.  The proof of that lemma further shows that they can be attached in subsectors of $J$ without the self-referential discs linking with other parts of $A$.  Finally, reverse the proof of Lemma \ref{alpha to srf} to obtain the desired $\mA'$ satisfying all but possibly the last conclusion.  If a $q_s$ lies outside the halfdisc bounded by $\alpha_s$ and the approach interval, then deleting the data $(\sigma_s\alpha_s,(p_s,q_s), \tau_s)$ does not change the isotopy class of the realization, \end{proof}

The next result follows from Lemmas \ref{alpha cancellation} and \ref{alpha stabilization}.

\begin{corollary}  \label{alpha addition} Let $\mA$ be a tubed surface in sector form.  Given  the data $(\alpha_s,(p_s,q_s), \tau_s)$ there exists a tubed surface $\mA'$ in sector form with realization isotopic to that of $\mA$ such that the data of $\mA'$ consists of the data from the sectors of $\mA$ plus another sector with data $(\alpha_s,(p_s,q_s), \tau_s)$ together with other sectors having data only involving $\alpha $ curves.\qed\end{corollary}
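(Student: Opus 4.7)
The plan is to produce $\mA'$ by first attaching a cancelling pair of self-referential discs to the realization $A$ of $\mA$ via Lemma \ref{alpha cancellation}, and then converting these added self-referential discs into $\alpha$-sector data using the correspondence developed in the proof of Lemma \ref{alpha to srf}.

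First I would read off the element $g\in\pi_1(M,z_0)$ and sign $\sigma\in\{+,-\}$ determined by the prescribed curve $\alpha_s$ and its orientation relative to the approach interval, so that the proof of Lemma \ref{alpha to srf} identifies a sector with data $(\alpha_s,(p_s,q_s),\tau_s)$ with the operation of attaching a self-referential disc of type $\sigma g$ whose framed attaching path and base-point data are read off from $\tau_s$, $p_s$, $q_s$. Next I would apply Lemma \ref{alpha cancellation} to the realization $A$ of $\mA$, which carries the dual sphere $G\subset\partial M$, choosing the type-$\sigma g$ self-referential disc to be attached in exactly the manner dictated by $(\alpha_s,(p_s,q_s),\tau_s)$ and the type-$(-\sigma g)$ one placed adjacently in a small arc of the approach interval. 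The resulting surface $A^{*}$ is isotopic to $A$, and by reversing the entangling move from the proof of Lemma \ref{alpha to srf}, one can exhibit $A^{*}$ as the realization of a tubed surface $\mA^{*}$ in sector form whose data is that of $\mA$ together with two new adjacent sectors: one with the prescribed data $(\alpha_s,(p_s,q_s),\tau_s)$, and one containing only $\alpha$-curve data representing $-\sigma g$.

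Finally, if the second new sector fails the embedded-$\alpha$ or halfdisc-position conditions of sector form, I would invoke Lemma \ref{alpha stabilization} applied to that sector alone; this subdivides it into finitely many $\alpha$-only sectors with embedded $\alpha$-curves whose $q$-points lie in the required halfdiscs, without disturbing any of the other data. The resulting $\mA'$ is in sector form, has realization isotopic to that of $\mA$, and its data consists of the sectors of $\mA$, the prescribed sector $(\alpha_s,(p_s,q_s),\tau_s)$, and finitely many additional sectors involving only $\alpha$-curves, as required.

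The main technical point I expect to need careful verification is the two-sidedness of the correspondence between $\alpha$-sector data and self-referential disc attachment: one must check that the flexibility afforded by Lemma \ref{alpha cancellation} is enough to realize the \emph{exact} prescribed attaching data $(\alpha_s,(p_s,q_s),\tau_s)$ in one of the two new sectors, rather than merely some sector representing the same $\pi_1$ class. This reduces to the observation that the entangling move of Lemma \ref{alpha to srf} is constructive and reversible at the level of attaching data — not merely at the level of isotopy class — so that any prescribed framed path, base point, and curve on $A_0$ in the desired sector can be arranged to be the output of the decoding procedure.
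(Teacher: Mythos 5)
Your proposal is correct and follows essentially the same route the paper intends: the corollary is stated as a direct consequence of Lemmas \ref{alpha cancellation} and \ref{alpha stabilization}, i.e.\ insert a cancelling pair of self-referential discs (which preserves the isotopy class of the realization since there is a dual sphere), encode one as the prescribed sector $(\alpha_s,(p_s,q_s),\tau_s)$ via the dictionary of Lemma \ref{alpha to srf}, and normalize the other into $\alpha$-only sectors with Lemma \ref{alpha stabilization}. Your added care about the reversibility of the entangling move at the level of attaching data is a reasonable elaboration of the same argument, not a different approach.
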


\noindent\emph{Proof of the Self-referential Form Theorem}.  By Proposition \ref{whats left} we can assume that  $\mA$ is in sector form.  

0) By Lemma \ref{normal permutation} the data of the various sectors can be  permuted.    

i) Elimination of the $(\beta_0, \gamma_0, \lambda_0)$ data  can be done as in Remark 8.2 \cite{Ga}.  This might create additional data of the form $(\alpha_s,(p_s,q_s), \tau_s)$.

ii) We can further assume that the $\lambda_i$'s represent distinct non trivial 2-torsion elements since the methods of \S 6 \cite{Ga} enable the exchange of a pair of double tubes representing the same 2-torsion element for a pair of single tubes.  Again, this might create data of the form $(\alpha_s,(p_s,q_s), \tau_s)$.

iii) The modification of the $\beta_i, \gamma_i$ curves to embedded tube guide curves can be done as in the two paragraphs after Remark 8.2 \cite{Ga}.  This might require that $\mA$ has particular sectors of the form $(\alpha_s,(p_s,q_s),\tau_s)$ in order to invert the operation of \S 6 \cite{Ga}.  We can create such sectors by Lemma \ref{alpha addition} at the cost of creating other sectors with data of the form $(\alpha_t,(p_t,q_t), \tau_t)$.  Also, the modification may create other sectors of this type.

iv) To reverse the ordering of the tube guide curves in $(\gamma_i,\beta_i, \lambda_i)$ where $\lambda_i$ represents 2-torsion, modify $\mA$ to create two new sectors with data of the form $(\beta_i, \gamma_i, \lambda_i), (\beta_i,\gamma_i, \lambda_i)$ at the cost of adding sectors with $(\alpha_s,(p_s,q_s), \tau_s)$ type data. Then cancel the $(\gamma_i,\beta_i, \lambda_i), (\beta_i, \gamma_i, \lambda_i)$ pairs at the possible cost of additional  type $(\alpha_s,(p_s,q_s), \tau_s)$ sectors.

v) Apply  Lemma \ref{alpha stabilization} to each sector with $(\alpha_s,(p_s,q_s), \tau_s)$ data.\qed.
\vskip 10 pt

If $\pi_1(M)=1$, then the self-referential form data is trivial, thus, we have proved the following, stated as Theorem \ref{main} i) in the introduction.

\begin{theorem} \label{pi trivial} Let $D_0, D_1$  be properly embedded discs in the 4-manifold that coincide near their boundaries and have the common dual sphere $G\subset \partial M$.  If $M$ is simply connected, then $D_1$ is homotopic to $D_0 \rel \partial $ if and only if it is isotopic  $\rel \partial$.  \end{theorem}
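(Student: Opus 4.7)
The plan is to deduce this from the Self-Referential Form Theorem (Theorem \ref{srf}) already stated above, since the whole content of the present statement is that in the simply connected setting the self-referential form data collapses to nothing. First I would invoke Theorem \ref{srf}: since $D_0$ and $D_1$ coincide near their boundaries, share the dual sphere $G\subset\partial M$, and are homotopic $\rel \partial$, there is an isotopy $\rel \partial$ carrying $D_1$ to a disc $D_1'$ in self-referential form with respect to $D_0$, with some data $(\lambda_1,\ldots,\lambda_n, \sigma_1 g_1,\ldots,\sigma_k g_k)$.

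Next I would inspect Definition \ref{sr form}. Each $\lambda_i$ is required to represent a distinct nontrivial 2-torsion element of $\pi_1(M)$, and each $g_i$ is required to represent a nontrivial element of $\pi_1(M,z_0)$. Under the hypothesis $\pi_1(M)=1$, no such group elements exist, so necessarily $n=0$ and $k=0$. Consequently the tubed surface $\mA$ whose realization is $D_1'$ has no $(\beta_j,\gamma_j,\lambda_j)$ data and no $(\alpha_i,(p_i,q_i),\tau_i)$ data, and its realization is therefore $D_0$ itself. Hence $D_1'=D_0$, and composing with the isotopy from $D_1$ to $D_1'$ produces an isotopy $\rel \partial$ from $D_1$ to $D_0$.

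The reverse implication, that isotopic $\rel \partial$ discs are homotopic $\rel \partial$, is immediate. I expect no genuine obstacle here beyond what was already handled in the proof of Theorem \ref{srf}; the role of simple connectivity is simply to force the 2-torsion list and the $\pi_1$-list appearing in self-referential form to be empty, which reduces $D_1'$ to $D_0$.
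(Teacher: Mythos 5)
Your proposal is correct and matches the paper's own argument, which is exactly the one-line observation that when $\pi_1(M)=1$ the self-referential form data of Theorem \ref{srf} must be empty, so the realization collapses to $D_0$. Nothing is missing; the only substance lies in Theorem \ref{srf} itself, which you correctly treat as the input.
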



\section{The Dax Isomorphism Theorem}

Let $f_0: N^n\to M^m$ be an embedding where $N$ and $M$ are closed manifolds.  In 1972 J. P. Dax showed that $\pi_k(\Maps(N,M), \Emb(N,M), f_0)$ is isomorphic to a certain bordism group when $2\le k\le 2m-3n-3$. See Theorem A and Theorem 1.1 \cite{Da}.  While both the statement and proof are expressed in the very abstract and general style of the day, our case of interest is a strikingly clean and beautiful geometric result with an elementary proof.  Using different language and in part different methods we exposit this result when $N=I:=[0,1]$ and $f_0: I\to M^4$ is a proper embedding with image $I_0$.   Again, unless stated otherwise, all maps and spaces are smooth and in this section manifolds are oriented. Standard spaces are standardly oriented.

\begin{definition}  Define the \emph{Dax group} $\pi_1^D(\Emb(I,M; I_0))$ to be the subgroup of $\pi_1(\Emb(I,M; I_0))$ consisting of classes represented by loops in $\Emb(I,M;I_0)$ that are homotopically trivial in $\pi_1(\Maps(I,M; I_0))$.  Here $\Emb(I,M;I_0)$ (resp. $\Maps(I,M;I_0)$) is the based space of proper embeddings (resp. proper continuous maps) that coincide with $I_0$ near $\partial I_0$.  Here we abuse notation by identifying the interval $I_0$ with the embedding $f_0:I\to I_0$.  \end{definition}


The following definition is a special case of the \emph{spinning} operation of Ryan Budney \cite{Bu}, see Figure \ref{FigureD,1}.  That figure shows the projection of a 4-ball $B\subset M$ to a 3-ball $\hat B$.  Our path $\alpha_{t}$, which is constant near $t=.5$, intersects $B$ (resp. $\hat B$) in arcs $\sigma$ and $\tau$ (resp. $\sigma$ and a point).  It is modified to one where $\sigma$ spins about the point.  What follows is a slightly more formal definition.

\begin{definition}\label{spin} Let $\alpha_t:L\to M, t\in [0,1]$ be a path in $\Emb(L,M)$ where $L$ is an oriented 1-manifold and $M$ an oriented 4-manifold.  Assume that $\alpha_t$ is constant for $t\in [.45, .55]$.  Let $B\subset M$ be parametrized by $[-2,2]\times [-2,2]\times [-1,1]\times [-1,1]$.  With respect to local coordinates assume that $B\cap L=\sigma\cup\tau$ where $\tau = (0,0,0,-s), s\in [-1,1]$, $\sigma=\{-1,0,s,0\}, s\in [-1,1]$ and both are oriented from the $s=-1$ to the $s=+1$ end.  We modify $\alpha$ to $\gamma$ so that $\alpha_t(s)=\gamma_t(s)$ unless $t\in [.45,55]$ and $\alpha_{.5}(s)\in \sigma$.  Within $t\in [.45,55]$, keeping endpoints fixed and staying within the 2-sphere $Q\subset [-2,2]\times [-2,2]\times [-1,1]\times 0=\hat B$, swing  $\sigma$ around $\tau$ by  first going around the negative $y$-side and then back along the positive $y$-side of $Q$.  This can be done so that $\gamma_t$ is a smooth loop.  See Figure \ref{FigureD,1}.  We say that $\gamma$ is obtained by \emph{spinning} $\alpha$. Note that Lk($\tau$,Q)=+1, where (motion of $\sigma$, orientation of $\sigma$) orients $Q$, in this case the standard orientation.  If in local coordinates $\lambda$ denotes the straight path from $(-1,0,0,0)$ to $(0,0,0,0)$, then we say that $\gamma$ is obtained from $\alpha$ by $\lambda$-\emph{spinning}.\end{definition}  

\begin{remarks} i) The inverse $\tau^{-1}$ of $\tau$ corresponds to going around $Q$ the other way, thereby reversing the orientation of $Q$ and hence the linking number.

ii) Up to homotopy in $\Emb(L,M;L_0)$, $\lambda$-spinning depends only on the path homotopy class of $\lambda$ and the linking number.  
 \end{remarks}

\setlength{\tabcolsep}{60pt}
\begin{figure}
 \centering
\begin{tabular}{ c c }
 $\includegraphics[width=5in]{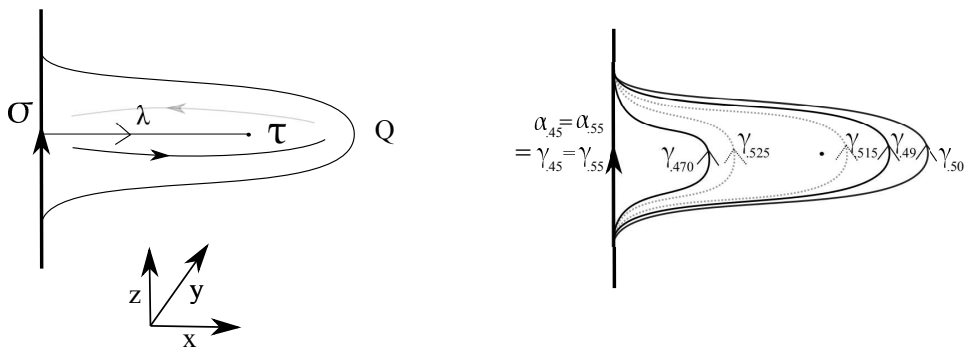}$  
\end{tabular}
 \caption[(a) X; (b) Y]{\label{FigureD,1}
 \begin{tabular}[t]{ @{} r @{\ } l @{}}
Obtaining $\gamma$ by $\lambda$-spinnning $\alpha$ \\
\end{tabular}}
\end{figure}


\begin{notation}  Let $I_0$ be a properly embedded $[0,1]$ in the 4-manifold $M$ and let $1_{I_0}$ denote the identity element in $\pi_1^D(\Emb(I,M;I_0))$.  Let $p<q\in I_0$ and $g\in \pi_1(M,I_0)$ where $I_0$ is viewed as the basepoint, then denote by $\tau_g\in \pi_1^D(\Emb(I,M;I_0))$  the loop obtained by spinning $1_{I_0}$ using a path $\lambda$ from $p$ to $q$ representing $g$.  Let $\tau_{-g}$ denote $\tau_g^{-1}$. \end{notation}

\begin{remarks} i) Spinning can be viewed as the arc pushing map that defines the barbell map of \cite{BG}.  Reversing the orientation of $\lambda$ changes a spin to its inverse up to homotopy in $\Emb(L,M)$.  See Theorem 6.6 \cite{BG}.   Do not confuse $\tau_{-g}=\tau_g^{-1}$ with $\tau_{g^{-1}}$.  

ii)  Modifying the orientation preserving parametrization of $B$, e.g., by an element of $\pi_1(SO(3))$ as one moves along $\lambda$, does not change the path homotopy class of $\gamma$.  See Remark 6.4 i) \cite{BG}.

iii) The homotopy class of $\gamma$  is independent of the representative of $\lambda$.  In particular $\tau_g$ is well defined up to homotopy in $\Emb(I,M;I_0)$ and represents an element of $\pi_1^D(\Emb(I,M;I_0))$.  If $g=1\in \pi_1(M,I_0)$, then $\tau_g = 1_{I_0}\in \pi_1^D(\Emb(I,M;I_0))$. 

\end{remarks}

\begin{lemma} Spinning commutes up to homotopy in $\Emb(I,M; I_0)$.\end{lemma}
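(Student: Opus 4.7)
The plan is to exploit the classical principle that two loops in a topological space with disjoint spatial supports automatically commute up to homotopy, witnessed here by a single map from the square whose boundary traces out the commutator. Given spinning data for $\tau_{g_1}$ and $\tau_{g_2}$, consisting of paths $\lambda_i$ from $p_i$ to $q_i$ on $I_0$ together with their small spin balls $B_i$, I will first arrange that $p_1,q_1,p_2,q_2$ are four distinct points of $I_0$ and that $B_1\cap B_2=\emptyset$. By Remark iii) preceding the lemma, the homotopy class of $\tau_g$ depends only on $g\in\pi_1(M,I_0)$, not on the specific representative of $\lambda$ or the ball, so I may freely slide the $\lambda_i$ along $I_0$ and shrink the $B_i$ to produce this disjointness.

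With disjoint supports in place, represent $\tau_{g_i}$ by a loop $\gamma_i:[0,1]\to\Emb(I,M;I_0)$ whose image differs from $I_0$ only inside $B_i$. Define $\Phi:[0,1]^2\to\Emb(I,M;I_0)$ by declaring $\Phi(s,t)$ to equal $\gamma_1(s)$ inside $B_1$, $\gamma_2(t)$ inside $B_2$, and $I_0$ elsewhere; this is well-defined precisely because $B_1\cap B_2=\emptyset$, and it lands in $\Emb(I,M;I_0)$ since the two modifications happen in disjoint regions of $M$. Traversing the boundary of the square counterclockwise yields $\gamma_1\cdot\gamma_2\cdot\gamma_1^{-1}\cdot\gamma_2^{-1}$, so $\Phi$ is a nullhomotopy of this commutator, giving $\tau_{g_1}\cdot\tau_{g_2}\simeq\tau_{g_2}\cdot\tau_{g_1}$.

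The principal point requiring care is the preliminary disjointness reduction: one must verify that sliding the endpoints of $\lambda_i$ along $I_0$ and shrinking $B_i$ leaves the homotopy class of $\tau_{g_i}$ in $\Emb(I,M;I_0)$ unchanged. I expect this to follow from a direct continuity argument based on Definition \ref{spin}, since $\gamma$ depends continuously on its input data and any two representatives of the same $g\in\pi_1(M,I_0)$ are path-homotopic in $M$; a small ambient isotopy of $M$ supported in a neighborhood of the original $\lambda_i \cup B_i$ then carries one spinning loop to the other through a path in $\Emb(I,M;I_0)$.
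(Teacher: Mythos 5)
Your proof is correct and is essentially the paper's argument: the paper's entire proof is the one-line observation that after an isotopy the supports of the two spins can be made disjoint, and your square map $\Phi$ just spells out the standard fact that loops with disjoint supports commute. (For the disjointness step, note that the support of $\tau_{g_i}$ is a neighborhood of the whole path $\lambda_i$ together with its ball, not just the ball, but since these are $1$-dimensional objects in a $4$-manifold general position makes them disjoint, exactly as you indicate.)
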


\begin{proof}  After an isotopy we can assume that the support of the spins are disjoint.\end{proof}

\begin{theorem} (Dax Isomorphism Theorem)  Let $I_0$ be an oriented properly embedded $[0,1]$ in the oriented 4-manifold $M$.  Then 

i) There is a homomorphism $d_3:\pi_3(M, x_0)\to \BZ[\pi_1(M)\setminus 1]$ with image $D(I_0)$, called the \emph{Dax kernal}.  



ii) $\pi_1^D(Emb(I,M;I_0))$ is generated by $\{\tau_g|g\neq 1, g\in \pi_1(M)\} $ and canonically isomorphic to $\BZ[\pi_1(M)\setminus 1]/D(I_0)$.  \end{theorem}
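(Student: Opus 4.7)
The plan is to build an isomorphism
\[
\Phi:\pi_1^D(\Emb(I,M;I_0))\longrightarrow \BZ[\pi_1(M)\setminus 1]/D(I_0)
\]
by counting, with signs and $\pi_1$-labels, the isolated double points that appear in a generic disc in $\Maps$ bounding a given loop in $\Emb$, and to verify that its inverse sends $g\mapsto \tau_g$. First observe that $\Maps(I,M;I_0)$, the paths from $I_0(0)$ to $I_0(1)$ with fixed endpoints, is homotopy equivalent to $\Omega M$ (based at $I_0(0)$), so $\pi_k(\Maps(I,M;I_0))\cong \pi_{k+1}(M)$. The basic counting construction associates to a smooth map $F:X^k\to \Maps(I,M;I_0)$ its total map $\tilde F:X^k\times I\to M$, and the double-point set $\Delta_F=\{(x,t_1,t_2):t_1<t_2,\ \tilde F(x,t_1)=\tilde F(x,t_2)\}$, which is generically of dimension $k+2-4=k-2$. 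For each double point $p=(x,t_1,t_2)$, the arc $\tilde F(x,\cdot)|_{[t_1,t_2]}$ conjugated to the basepoint along $\tilde F(x,\cdot)|_{[0,t_1]}$ gives an element $g_p\in\pi_1(M,I_0(0))$, and local orientation data gives a sign $\epsilon_p$.

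The construction with $k=2$ applied to a representative $\psi:S^2\to \Maps(I,M;I_0)$ of a class in $\pi_2(\Maps)\cong \pi_3(M)$ gives finitely many double points; a cobordism argument (bordisms between representatives lift to $3$-parameter families whose $1$-dimensional double-point manifolds interpolate) shows the signed sum $\sum \epsilon_p g_p$, after discarding the $g_p=1$ terms (which can always be cancelled by a local isotopy of $\psi$), descends to a well-defined homomorphism $d_3:\pi_3(M)\to \BZ[\pi_1(M)\setminus 1]$; its image is by definition $D(I_0)$. For $[\gamma]\in \pi_1^D$ pick $H:D^2\to \Maps(I,M;I_0)$ with $H|_{\partial D^2}=\gamma$, perturb to be generic, and set $\Phi([\gamma])=\sum \epsilon_p g_p \pmod{D(I_0)}$. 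Two choices $H_0,H_1$ glue to an $S^2$-family whose count equals their difference, so $\Phi$ is well-defined in the quotient; concatenation of loops corresponds to boundary-connect-sum of null-homotopies, making $\Phi$ a homomorphism.

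To compute $\Phi(\tau_g)$, recall that the spinning construction produces $\tau_g$ by sweeping the arc $\sigma$ along a 2-sphere $Q\subset \hat B$ around the transverse arc $\tau$. This sweep admits a canonical null-homotopy in $\Maps$: fill the family of 2-spheres swept by $\sigma$ with the corresponding family of 3-balls, yielding $H_g:D^2\to \Maps(I,M;I_0)$ whose total map has precisely one transverse double point, situated along $\tau$, and whose associated loop closes up via $\lambda$ to represent $g$. Hence $\Phi(\tau_g)=g$ in $\BZ[\pi_1(M)\setminus 1]/D(I_0)$. In particular $\Phi$ is surjective and, writing $\Psi:\BZ[\pi_1(M)\setminus 1]\to \pi_1^D$ for $g\mapsto \tau_g$, we have $\Phi\circ\Psi=\id$.

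The main obstacle is injectivity, which simultaneously shows that the $\tau_g$ generate. Suppose $\Phi([\gamma])=0$, choose a null-homotopy $H$, and use the relation $\Phi(\tau_g)=g$ to modify $H$ (by attaching the null-homotopies $H_g$ of suitable $\tau_g$'s along the boundary) until the signed count $\sum \epsilon_p g_p$ vanishes in $\BZ[\pi_1(M)\setminus 1]$, not merely modulo $D(I_0)$. After this normalization the double points pair up as $(p,p')$ with $\epsilon_p=-\epsilon_{p'}$ and $g_p=g_{p'}$. The relation $g_p=g_{p'}$ produces a null-homotopy in $M$ of the concatenated loops coming from $p,p'$, which in turn yields a Whitney-type disc along which one can perform a family Whitney move on $\tilde H:D^2\times I\to M$ to cancel the pair without disturbing $H|_{\partial D^2}$. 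The hard point is that since the source is $3$-dimensional and the ambient manifold is $4$-dimensional, Whitney discs are immersed of codimension $2$ and the move is subtle: one uses genericity together with Smale's theorem $\Diff(D^2,\partial)\simeq\ast$ to arrange framings consistently, and the dimension count $2\cdot 4-3\cdot 1-3=2\geq 1=k$ of Dax's theorem guarantees the move goes through. When all pairs are cancelled, $H$ lies in $\Emb$, so $\gamma$ is null-homotopic there; this is exactly the assertion that $\Phi$ is injective and that $\Psi\circ\Phi=\id$, completing the proof.
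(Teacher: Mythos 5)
Your construction of the invariant matches the paper's: both define $d(\alpha_{t,u})$ (your $\Phi$) by counting signed, $\pi_1$-decorated double points of a generic null-homotopy $F_0(s,t,u)=(\alpha_{t,u}(s),t,u)$ viewed as a $3$-manifold in $M\times I^2$, prove well-definedness modulo $D(I_0)$ by a one-parameter cobordism of null-homotopies, and evaluate the invariant on spin maps to get $d(\tau_g)=g$. Up to that point you are reproducing Steps 1--3 and 5 of the paper's proof.

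The genuine gap is in your injectivity argument. You pair up double points with $g_p=g_{p'}$, $\epsilon_p=-\epsilon_{p'}$ and assert that a ``family Whitney move'' on $\tilde H:D^2\times I\to M$ cancels each pair rel boundary. This is exactly the parametrized double-point elimination that constitutes the hard technical core of Dax's original paper, and your justification does not carry it: the Whitney disc must be chosen and the move performed \emph{compatibly with the projection to the parameter square} $I^2$, so that the result is still a family of arcs (and still a family of \emph{embeddings} near $\partial D^2$); a Whitney move on the $3$-manifold $F_0(I^3)\subset M\times I^2$ performed generically destroys that fibered structure. The dimension count $2m-3n-3\ge k$ is the hypothesis under which Dax shows this can be done, not a proof that it can, and Smale's theorem on $\Diff(D^2,\partial)$ is not relevant here (the paper uses it only for the well-definedness of $\phi_{D_0}$ in Construction \ref{obstruction}). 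The same issue infects your claim that double points with $g_p=1$ ``can always be cancelled by a local isotopy.'' The paper deliberately routes around all of this with its Step 4: by induction on the number of double points, \emph{any} null-homotopy exhibits the loop $\alpha$ as a composition of spin maps $\tau_{\sigma_{x_i}g_{x_i}}$, one per double point, read off from a local normal form at each double point. Injectivity (and generation by the $\tau_g$) then follows purely algebraically, since spin maps commute and $\tau_{-g}=\tau_g^{-1}$, so a word whose exponent sum vanishes in $\BZ[\pi_1(M)\setminus 1]$ is trivial. To repair your argument, either supply the parametrized Whitney move in full (essentially redoing Dax's Chapter III--IV), or replace it with the local-model induction of the paper's Step 4.
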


\noindent\emph{Proof}.   
Let $\alpha=\alpha_t, t\in I$ represent an element of $\pi_1^D(\Emb(I,M;I_0))$.  Being in the Dax group, there exists a homotopy $\alpha_{t,u} \in \Maps(I,M;I_0) $ such that  $ \alpha_{t,u}$ equals $1_{I_0}$ for $u$ near $0$ and $\alpha_{t,u}$ equals $\alpha_t $ for $u$ near 1.  
\vskip 8pt
\noindent\emph{Step 1}:  Define $d(\alpha_{t,u})\in \BZ[\pi_1(M)\setminus 1]$.
\vskip 8pt
As in \cite{Da} define $F_0:I\times I^2 \to M\times I^2$ by $F_0(s,t,u)=(\alpha_{t,u}(s),t, u)$.  As in Chapter III  \cite{Da} we can assume that $F_0$ is \emph{parfait}, in particular is an immersion, has finitely many double points and no triple points.  Furthermore, $F_0$ is self transverse at the double points which we can assume occur at distinct values of the last factor.  The results in Chapter III are stated for closed manifolds but apply to manifolds with boundary since the support of the modification occurs away from the boundary.  See also Chapter VI \cite{Da} which mentions the bounded case. 

\setlength{\tabcolsep}{60pt}
\begin{figure}
 \centering
\begin{tabular}{ c c }
 $\includegraphics[width=5in]{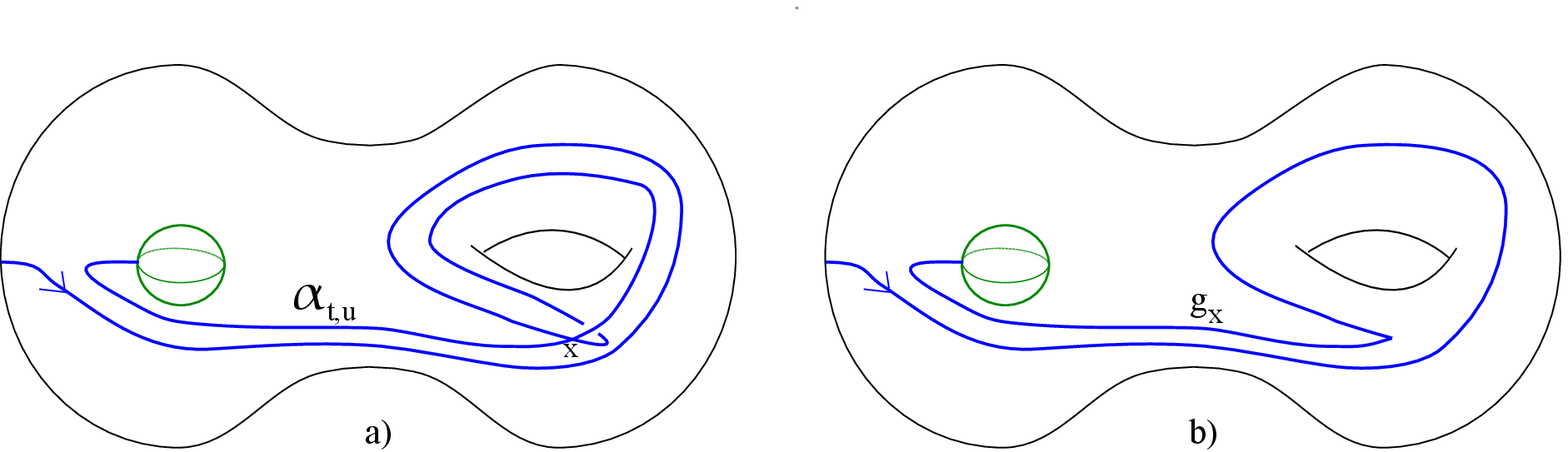}$  
\end{tabular}
 \caption[(a) X; (b) Y]{\label{Figure3,2}
 \begin{tabular}[t]{ @{} r @{\ } l @{}}
\end{tabular}}
\end{figure}

Assign a generator $\sigma_x g_x\in \BZ[\pi_1(M)]$ to each double point $x$ as follows.   Suppose $x=\alpha_{t,u}(p)=\alpha_{t,u}(q)$, where $p<q$.  Let $g_x\in \pi_1(M,I_0)$ be represented by $\alpha_{t,u}|[0,p]*\alpha_{t,u}|[q,1]$.  See Figure \ref{Figure3,2}.  Note that $I_0$ functions as the basepoint.  Let $\sigma_x$ be the self intersection number obtained by comparing the orientation of $DF_0(T_{p,t,u}(I^3))\oplus DF_0(T_{q,t,u}(I^3))$ with that of $T_x(M\times I^2)$.  If $x_1, \cdots, x_n$ are the double points with $g_{x_i}\neq 1$, then define $d(\alpha_{t,u})=\sum_{i=1}^n \sigma_{x_i} g_{x_i}$.
\vskip 8pt

The next two steps show that modulo $D(I_0)$, different choices of $\alpha_{t,u}$ give the same $d$ value. 
\vskip 8pt

\noindent\emph{Step 2}:  If $\alpha^0_{t,u}$ is properly homotopic to $\alpha^1_{t,u}$, then $d(\alpha^0_{t,u})=d(\alpha^1_{t,u})$.
\vskip 8pt
\noindent\emph{Proof}.  By properly homotopic we mean that there exists $\alpha^v_{t,u}, v\in I$ such that each $ \alpha^v_{t,u}\in \Maps(I,M, I_0)$,  $\alpha^v_{t,1}$, $v\in I$ is a homotopy in $\Emb(I,M, I_0)$ from $\alpha^0_{t,1}$ to $\alpha^1_{t,1}$ and $ \alpha^v_{t,u}$ equals $1_{I_0}$ for $u$ near $0$ and $v\in I$.  

Suppose that we have two homotopies $F_0, F_1$ as in Step 1, that are homotopic rel $\partial$.  Then we can interpolate by maps $F_v$ and combine them to a map $F:(I\times I\times I)\times I\to (M\times I\times I)\times I$, such that $F(s,t,u,v)=(\alpha^v_{t,u}(s),t,u,v)$.  Again, we can assume that $F$ is parfait and hence away from finitely many singularities $F$ is a self transverse immersion without triple points.  The double points form a 1-manifold whose endpoints in the interior of $M\times I^3$ occur at singularities.  The local form of a singularity, p.332 \cite{Da}, implies that a double point $x$ sufficiently close to a singular point has $g_x= 1$.  Indeed, since  each $\alpha^v_{t,u}$ is path homotopic to $I_0$, if $x=\alpha^v_{t,u}(r)=\alpha^v_{t,u}(s)$,  then $g_x=1$ when the loop $\alpha^v_{t,u}|[r,s]$ is homotopically trivial.  Here, that loop is homotopically trivial since its diameter converges to $0$ as $x$ approaches the singular point.  Finally, use the other double curves to equate the $d$ values coming from $F_0$ and $F_1$.  \qed

\vskip 8pt  

If $\pi_3(M)\neq 0$, then there will be non homotopic null homotopies of $\alpha_t$ in $\Maps(I,M; I_0)$ which may lead to different values of $d(\alpha_{t,u})$.  The Dax kernal keeps track of this indetermanency.  Call an $\alpha_{t,u}$ a \emph{kernal map} if for all $u$ close to either 0 or 1, $\alpha_{t,u}=1_{I_0}$.  In a natural way, up to homotopy supported away from $\partial I^3$ there is a natural isomorphism between kernal maps and $\pi_3(M, x_0)$, where $x_0=I_0(1/2)$ and the addition of kernal maps is given by concatenation.

\begin{definition}   Define $d_3:\pi_3(M,x_0)\to \BZ[\pi_1(M)\setminus 1]$ as follows.  Represent $a\in \pi_3(M,x_0)$ as a kernal map $\alpha_{t,u}$.  Now define $d(a)=d(\alpha_{t,u})\in \BZ[\pi_1(M)\setminus 1]$ as in Step 1.  Define $D(I_0)=d_3(\pi_3(M,x_0))$.  When $I_0$ is clear from context, we will write $D(I_0)$ as $D$.  \end{definition}

\noindent\emph{Step 3}: $d_3:\pi_3(M)\to \BZ[\pi_1(M)\setminus 1]$ is a homomorphism as is $d: \pi_1^D(\Emb(I,M;I_0))\to \BZ[\pi_1(M)\setminus 1]/D$ where $d(\alpha_t):=d(\alpha_{t,u})$ for some $\alpha_{t,u}$.
\vskip 8pt
\noindent\emph{Proof}.  The proof of Step 2 shows that $d_3:\pi_3(M)\to \BZ[\pi_1(M)\setminus 1]$ is well defined.   Its additivity with respect to concatenation shows that it is a homomorphism. If $\alpha^0_{t,u}$, $\alpha^1_{t,u}$ are two null homotopies of  $\alpha_t$ in $\Maps(I,M;I_0)$, then after concatenating with a kernal map we obtain a new null homotopy whose $d$ value differs by an element of $D$.    It follows that $d: \pi_1^D(\Emb(I,M;I_0))\to \BZ[\pi_1(M)\setminus 1]/D$ is well defined.

To show that $d$ is a homomorphism  first observe that $d(1_{I_0})=0$. By concatenating $F_0$'s for $\alpha$ and $\beta$ we see that $d(\alpha*\beta)=d(\alpha)+d(\beta)$.\qed


\vskip 8pt
\noindent\emph{Step 4}:  If $[\alpha]\in \pi_1^D(\Emb(I,M;I_0))$ and without cancellation $d(\alpha_{t,u}) = \sigma_{x_1} g_{x_1} + \cdots + \sigma_{x_n} g_{x_n}$, then $\alpha$ is homotopic to the compositions of spin maps $\tau_{\sigma_{x_1} g_{x_1}}, \cdots, \tau_{\sigma_{x_n} g_{x_n}}$.
\vskip 8pt
\noindent\emph{Proof}.  Let $F_0:I\times I\times I \to M\times I^2$ as in Step 1.  We prove Step 3 by induction on the number of double points.  Assume for the moment Step 3 is true if $F_0$ has $\le k$ double points where $k\ge 1$.  If $F_0$ has  $k+1$ double points, then by changing coordinates we can assume that one occurs at $x=F_0(p,\frac{1}{2},\frac{1}{2})=F_0(q,\frac{1}{2}, \frac{1}{2})$ where $p<q$ and the others occur at $F_0(s,t,u)$ where $u>3/4$.  Thus, $F_0|I\times I\times 5/8$ is homotopic to a spin map 
$\tau$ and there is a homotopy $G_0$ from $1_{I_0}$ to $\tau^{-1}* \alpha$ with $k$ double points of the same group ring types as $F_0|I\times I\times [5/8,1]$ and hence the result follows by induction.  
\vskip 8pt
We now consider the case that there is a single double point.   By modifying the homotopy rel $\partial$ we can assume that with respect to local coordinates on $M\times I\times I$ and  local variables $-\epsilon\le s',t',u' \le \epsilon$; 
\vskip 8pt
$F(q+s',t'+\frac{1}{2},u'+\frac{1}{2})=(0,0,0,-s',t'+\frac{1}{2},u'+\frac{1}{2})$,

$F(p+s',t'+\frac{1}{2},u'+\frac{1}{2})=(u', t', s',0,t'+\frac{1}{2},u'+\frac{1}{2})$ if $\sigma_x=+1$,

$F(p+s',t'+\frac{1}{2},u'+\frac{1}{2})=(u', -t', s',0, t'+\frac{1}{2},u'+\frac{1}{2})$ if $\sigma_x=-1$.  
\vskip 8pt
Thus, the passage from $\alpha_{t,\frac{1}{2}-\epsilon}$ to $\alpha_{t,\frac{1}{2}+\epsilon}$ changes $1_{I_0}$ to $\tau_{ \sigma_x g_x}$, where $g_x$ is the loop $\phi_0*\phi_1$ where $\phi_0$ (resp. $ \phi_1$) is the arc $F_0(p,\frac{1}{2},w), 0\le w \le \frac{1}{2}\ $(resp. $F_0(q,\frac{1}{2},1-w), \frac{1}{2}\le w\le 1)$ which is homotopic to the loop $g_x$. \qed
\vskip 8pt

\noindent\emph{Step 5}:  $d$ is canonical; i.e. if $\alpha$ is a composition of $\tau_{\sigma_1 g_1}, \cdots, \tau_{\sigma_n g_n}$, with all $g_i\neq 1$, then there exists $\alpha_{t,u}$ with $d(\alpha_{t,u})=\sigma_1 g_1 +\cdots +\sigma_n g_n$.
\vskip 8pt
\noindent\emph{Proof}. The local functions defined in Step 4 show how to construct a homotopy $F_0$  from $1_{I_0}$ to $\alpha$ whose double points evaluate to $\sigma_1 g_1, \cdots, \sigma_n g_n.$  \qed
\vskip 8pt

\noindent\emph{Step 6}: $d: \pi_1^D(\Emb(I,M;I_0))\to \BZ[\pi_1(M)\setminus 1]/D$ is an isomorphism.
\vskip 8pt
\noindent\emph{Proof}.  Step 3 and 5 show that $d$ is a surjective homomorphism. We now prove injectivity.  If $\alpha\in \pi_1^D(\Emb(I,M;I_0))$ and $d(\alpha_{u,t})\in \mD$ then by concatenating with a kernal map we can assume that $d(\alpha_{u,t})=0$.  It follows from Step 4 that $\alpha$ is homotopic to a composit of spin maps $\tau_{\sigma_{x_1} g_{x_1}}, \cdots, \tau_{\sigma_{x_n} g_{x_n}}$ whose sum is equal to 0 in $\BZ[\pi_1(M)\setminus 1]$.  Since spin maps commute it follows that $\alpha$ is homotopic to 
$1_{I_0}$.   This completes the proof of the Dax isomorphism theorem. \qed
\vskip 8pt





\begin{theorem} Let M be a 4-manifold such that  $\pi_3(M)=0$, then $\pi_1^D(Emb(I,M;I_0))$ is freely generated by $\{\tau_g|g\neq 1, g\in \pi_1(M)\} $ and canonically isomorphic to $\BZ[\pi_1(M)\setminus 1]$.\qed\end{theorem}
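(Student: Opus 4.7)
The plan is to deduce this statement immediately from the Dax Isomorphism Theorem just proved. Since $\pi_3(M)=0$ by hypothesis, the homomorphism $d_3:\pi_3(M,x_0)\to\BZ[\pi_1(M)\setminus 1]$ has trivial domain, so the Dax kernal $D(I_0)=d_3(\pi_3(M,x_0))$ is the zero subgroup. Substituting this into the canonical isomorphism from Step 6 yields
\[
\pi_1^D(\Emb(I,M;I_0))\;\cong\;\BZ[\pi_1(M)\setminus 1]/D(I_0)\;=\;\BZ[\pi_1(M)\setminus 1].
\]

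For the free generation claim, I would observe that under the isomorphism $d$ constructed in Steps 4 and 5, each spin loop $\tau_g$ with $g\neq 1$ maps to the basis element $g\in\BZ[\pi_1(M)\setminus 1]$. Concretely, Step 5 exhibits a null-homotopy $\alpha_{t,u}$ of $\tau_g$ in $\Maps(I,M;I_0)$ whose only nontrivial double point contributes $+g$ to $d(\alpha_{t,u})$. Since the set $\pi_1(M)\setminus 1$ is by definition a free $\BZ$-basis of the group ring $\BZ[\pi_1(M)\setminus 1]$, the preimages $\{\tau_g\mid g\neq 1\}$ form a free abelian generating set of $\pi_1^D(\Emb(I,M;I_0))$.

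No new work is required beyond invoking the previous theorem, and I do not expect any obstacle: the only thing to verify is the already-established assignment $\tau_g\mapsto g$, and the vanishing of $D(I_0)$ under the hypothesis $\pi_3(M)=0$. \qed
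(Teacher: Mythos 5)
Your proposal is correct and matches the paper's (implicit) argument exactly: the paper treats this as an immediate corollary of the Dax Isomorphism Theorem, since $\pi_3(M)=0$ forces the Dax kernal $D(I_0)=d_3(\pi_3(M,x_0))$ to vanish, and the identification $\tau_g\mapsto g$ from Steps 4--5 gives the free generation. Nothing further is needed.
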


\begin{theorem} If $M=S^1\times B^3\natural S^2\times D^2$, then $\pi_1^D(Emb(I,M;I_0))$ is isomorphic to $\BZ[\BZ\setminus 1]$ and is freely generated by $\{\tau_g|g\neq 1, g\in \pi_1(M)\} $. (Here $\pi_1(M)$ is expressed multiplicatively.) \end{theorem}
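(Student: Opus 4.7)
The plan is to reduce the theorem to the Dax Isomorphism Theorem (Theorem \ref{dax}) and verify that the Dax kernel vanishes for this $M$. First I would note that $M = S^1 \times B^3 \natural S^2 \times D^2$ deformation retracts onto $S^1 \vee S^2$, so $\pi_1(M) \cong \BZ$, written multiplicatively with some generator $t$. Applying Theorem \ref{dax} already identifies $\pi_1^D(\Emb(I, M; I_0))$ with $\BZ[\BZ \setminus 1]/D(I_0)$ and exhibits the classes $\tau_g$, $g \neq 1$, as a generating set; the whole task is to show $D(I_0) = d_3(\pi_3(M)) = 0$, which simultaneously upgrades these generators to free generators.

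To analyze $\pi_3(M)$ I would pass to the universal cover $\tilde M$, which deformation retracts onto $\bigvee_{n \in \BZ} S^2_n$ (the real line with a $2$-sphere attached at each integer). Hilton's theorem then decomposes $\pi_3(\tilde M) = \pi_3(M)$ as a direct sum of copies of $\BZ$, one summand generated by each Hopf map $\eta_n : S^3 \to S^2_n$ and one by each Whitehead product $[\iota_n, \iota_m] : S^3 \to S^2_n \vee S^2_m$ for $n < m$; all higher weight basic products contribute $\pi_3(S^k) = 0$ for $k \geq 4$. Since $d_3$ is a homomorphism, it is enough to check it vanishes on each Hilton generator.

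For a Hopf generator $\eta_n$, after composing with the covering projection the image lies in the simply-connected subspace $S^2 \cup I_0 \subset M$, so every loop $g_x$ formed at a double point of a parfait realization is null in $\pi_1(M)$ and $d_3(\eta_n) = 0$. The hard part will be the Whitehead generators, because the image of $[\iota_n, \iota_m]$ in $M$ lies in $S^2 \cup \gamma$ with $\gamma \subset S^1 \subset M$ representing the nontrivial class $t^{m-n}$, so individual $g_x$'s need not be trivial. My strategy here is to exploit that the lifts $S^2_a \subset \tilde M$ are pairwise disjoint embedded $2$-spheres with trivial normal bundle, so that the equivariant intersection pairing $\lambda([S^2], t^k \cdot [S^2]) \in \BZ[\pi_1(M)]$ vanishes identically in $k$. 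Representing $[\iota_n, \iota_m]$ by a parfait kernel map whose lift to $\tilde M$ is supported in a small regular neighborhood of $S^2_n \cup S^2_m$ together with the joining arc, the signed count of double points with any fixed label $g \in \pi_1(M) \setminus 1$ is read off from the equivariant intersection number $\lambda$ between suitable $\pi_1$-translates of these two disjoint spheres, and so vanishes. Combining, $d_3 = 0$ on all Hilton generators, $D(I_0) = 0$, and the Dax Isomorphism Theorem yields $\pi_1^D(\Emb(I, M; I_0)) \cong \BZ[\BZ \setminus 1]$ freely generated by $\{\tau_g : g \neq 1\}$.
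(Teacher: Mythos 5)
Your overall route is the same as the paper's: invoke the Dax isomorphism theorem, identify $\pi_3(M)=\pi_3(\tilde M)$ with $\pi_3\bigl(\bigvee_{n\in\BZ}S^2_n\bigr)$ via Hilton--Milnor, and show $d_3$ vanishes on a generating set so that $D(I_0)=0$. Your treatment of the Hopf generators is exactly the paper's argument: a kernel map can be supported in $N(Q)\cup I_0$ with $Q$ chosen disjoint from $I_0$, so every double-point loop $g_x$ lies in a simply connected subcomplex and $d_3(\eta_n)=0$. You are also right to isolate the Whitehead products $[\iota_n,\iota_m]$, $n\neq m$, as the delicate case: their images in $M$ necessarily contain a loop representing $t^{m-n}$, so the ``simply connected support'' argument --- which is essentially all the paper offers for the whole of $\pi_3(M)$ --- does not literally apply to them. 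In that respect your write-up is more careful than the paper's one-sentence proof.

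The gap is in your final step. The assertion that ``the signed count of double points with any fixed label $g$ is read off from the equivariant intersection number $\lambda$ between suitable $\pi_1$-translates of these two disjoint spheres'' is precisely what needs proof, and nothing in \S 3 supplies it: $d_3$ is defined by counting fiberwise double points of the very particular map $F_0(s,t,u)=(\alpha_{t,u}(s),t,u)$ --- a two-parameter family of arcs rel $I_0$ --- not by a general self-intersection count of a $3$-manifold in $M\times I^2$, so the identification of labeled double points with values of $\lambda$ is not automatic. To close it, exhibit an explicit kernel map for $[\iota_n,\iota_m]$: represent $\iota_n$ and $\iota_m$ by $Q$ and a disjoint parallel pushoff $Q'$ (using the trivial normal bundle), whiskered to $I_0$ by arcs differing by $t^{m-n}$, and arrange the $(t,u)$-family so that the finger sweeps $Q$ and $Q'$ in the Whitehead pattern; then verify that the only potential double points of the arcs lie over points of $Q\cap Q'$ and over intersections of each sphere with its own pushoff, all of which are empty. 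Alternatively one may appeal to the known formula expressing $d_3$ of a Whitehead product in terms of the equivariant intersection form (Kosanovi\'c--Teichner), which vanishes here since $\lambda([Q],t^k[Q])=0$ for all $k$. With that verification supplied your proof is complete and reaches the stated conclusion; without it, the crucial case is asserted rather than proved.
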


\begin{proof}  $\pi_3(M)$ as a $\BZ[\pi_1]$ module is generated by the Hopf map of $S^3$ to a 2-sphere $ Q$ and Whitehead products of conjugates of $\pi_2(Q)$.  Once given $ I_0, Q$ can be chosen disjoint from $I_0$ and hence any element of $\pi_3(M)$ has support in a simply connected subcomplex. \end{proof}

\begin{theorem}\label{connect sum} If $M=S^1\times B^3\# S^2\times D^2$, then $\pi_1^D(Emb(I,M;I_0))$ is isomorphic to $\BZ[\BN]$ and is freely generated by $\{\tau_g|g\ge 1\}$.\end{theorem}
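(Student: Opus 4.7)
The plan is to apply the Dax Isomorphism Theorem and then to compute $D(I_0)\subset\BZ[\pi_1(M)\setminus 1]$ by evaluating $d_3$ on a generating set of $\pi_3(M)$. Since each summand deformation retracts onto its core, $M\simeq S^1\vee S^2$; so $\pi_1(M)=\BZ$, written multiplicatively with generator $t$, and $\BZ[\pi_1(M)\setminus 1]=\bigoplus_{n\neq 0}\BZ t^n$. I will show that $D(I_0)$ is the subgroup generated by $\{t^k-t^{-k}:k\ge 1\}$, so that the quotient has $\BZ$-basis $\{t^k:k\ge 1\}$ and, under $\tau_g\leftrightarrow g$, gives $\BZ[\BN]$ with free generators $\{\tau_g:g\ge 1\}$.

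First I would analyze $\pi_3(M)$. The universal cover $\tilde M$ is obtained from $\BR\times B^3$ by connect-summing a copy of $S^2\times D^2$ at each integer translate, so $\tilde M\simeq\bigvee_{n\in\BZ}S^2_n$. By Hilton's theorem, $\pi_3$ of this wedge is generated as an abelian group by Hopf classes $\eta_n\in\pi_3(S^2_n)$ and Whitehead products $[\iota_i,\iota_j]$ for $i<j$, where $\iota_n\colon S^2\to S^2_n$ is the inclusion; the deck action shifts indices. Because $\pi_1(M)$ is abelian, the $\pi_1$-action on $\pi_3$ conjugates the loops labelling double points trivially, so $d_3$ is $\pi_1$-invariant. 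Consequently $D(I_0)$ is generated as an abelian group by $d_3(\eta_0)$ and by $\{d_3([\iota_0,\iota_k]):k\ge 1\}$.

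Next I plan to evaluate these. The Hopf class $\eta_0$ admits a representative null-homotopy of $1_{I_0}$ supported in a simply-connected 4-ball around $S^2_0$ together with a short arc to $I_0$, so every double point of the associated $F_0$ carries a trivial loop and $d_3(\eta_0)=0$. For the Whitehead product I would build a null-homotopy via the Samelson product of $\beta_0$ and $\beta_k$ in $\pi_1(\Maps(I,M;I_0))=\pi_2(M)$, where $\beta_i$ pushes a small arc of $I_0$ across the sphere $S^2\subset M$ conjugated by a loop representing $t^i$. The antisymmetry $[\iota_0,\iota_{-k}]=-t^{-k}\cdot[\iota_0,\iota_k]$ together with $\pi_1$-invariance of $d_3$ forces $d_3([\iota_0,\iota_{-k}])=-d_3([\iota_0,\iota_k])$; since the only loops that can label double points appearing in such a Samelson null-homotopy are the two paths between the lifts $S^2_0$ and $S^2_k$ in $\tilde M$, namely $t^{\pm k}$, it follows that $d_3([\iota_0,\iota_k])=c(t^k-t^{-k})$ for some integer $c$. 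A transverse perturbation of the Samelson null-homotopy produces exactly two double points with opposite signs and labels $t^k$ and $t^{-k}$, giving $c=\pm 1$.

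Once these computations are in hand, the theorem follows: $D(I_0)=\langle t^k-t^{-k}:k\ge 1\rangle$, so $t^{-k}=t^k$ in the quotient and $\pi_1^D(\Emb(I,M;I_0))\cong\BZ[\BN]$ freely generated by $\{\tau_{t^k}:k\ge 1\}$. The hard part will be the explicit double-point count establishing $c=\pm 1$ rather than $0$: the symmetry argument only pins down the form $c(t^k-t^{-k})$, and the nonvanishing of $c$ reduces to a local model computation in a neighborhood of the union of the sphere $S^2$ with a loop $\gamma_k$ representing $t^k$ in $M$.
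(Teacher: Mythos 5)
There is a genuine gap, and it occurs at the very first step: you assert that $M=S^1\times B^3\# S^2\times D^2\simeq S^1\vee S^2$ because ``each summand deformation retracts onto its core.'' That is the homotopy type of the \emph{boundary} connected sum $S^1\times B^3\natural S^2\times D^2$, which is the subject of the preceding theorem (where the Dax kernal is trivial); it is false for the interior connected sum. Writing $M=(S^1\times B^3\setminus \inte(B^4))\cup_{S^3}(S^2\times D^2\setminus \inte(B^4))$, Mayer--Vietoris gives $H_3(M)\cong\BZ$, generated by the separating $3$-sphere $\Sigma$, so $M\simeq S^1\vee S^2\vee S^3$ and the universal cover is a wedge of $2$-spheres \emph{and} $3$-spheres $S^2_n\vee\Sigma_n$, $n\in\BZ$, not $\bigvee_n S^2_n$. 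Consequently your Hilton generating set for $\pi_3(M)$ as a $\BZ[\pi_1(M)]$-module (Hopf classes and Whitehead products of the $S^2_n$) is incomplete: it omits the $\pi_1$-orbit of $[\Sigma]$, and these are precisely the classes the paper's proof evaluates. The entire content of this theorem, in contrast with the previous one, is the extra $\pi_3$ carried by the separating sphere, and your argument never sees it.

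The error then propagates into the evaluation step. You attribute the relations $t^k-t^{-k}$ to the Whitehead products $[\iota_0,\iota_k]$, but those same Whitehead products are already present in the boundary connected sum, where the paper's preceding theorem shows the Dax kernal vanishes: the spheres $S^2_n$ are disjointly embedded in the universal cover and disjoint from $I_0$, so such a class admits a kernal map whose double points all carry the trivial group element, whence $d_3([\iota_0,\iota_k])=0$. The same configuration of $Q_0$, $Q_k$ and $I_0$ occurs verbatim in the $\#$ case, so your claim that ``a transverse perturbation of the Samelson null-homotopy produces exactly two double points with labels $t^k$ and $t^{-k}$'' is exactly the step that fails (the count is zero); the appeal to ``$\pi_1$-invariance of $d_3$'' is also unjustified, since changing the whisker of a $\pi_3$-class does not act on $d_3$ merely by the (here trivial) conjugation action. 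The paper instead obtains the relations, up to sign $t^i\pm t^{-i}$, from $d_3$ applied to the conjugates $t^i\cdot[\Sigma]$ of the separating sphere. Your final description of the quotient happens to coincide with the paper's, but only because either sign of relation collapses $\BZ[\pi_1(M)\setminus 1]$ onto $\BZ[\BN]$; the derivation does not support it. To repair the proof you must (a) correct the homotopy type and include the classes $[\Sigma_n]$ in your module generators, (b) show the Hopf classes and Whitehead products contribute $0$ (your argument for the Hopf class is fine and the Whitehead products follow as above), and (c) carry out the double-point computation for a kernal map representing $t^i\cdot[\Sigma]$, which is where the relations actually arise.
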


\begin{proof} Here the Dax kernal $\neq 0$.  The various $\pi_1(M)$ conjugates in $\pi_3(M)$ of  the separating $S^3$ give, up to sign, the relations $g^i=g^{-i}$ in $\BZ[\pi_1(M)\setminus 1]$.\end{proof}

\begin{remarks} i) Theorem \ref{dax} is stronger than the one given in \cite{Da} in that we identified generators of $\pi_1^D(\Emb(I,M;I_0))$.  Working with these commuting elements enables us to avoid a parametrized double point elimination argument and the need to modify $F_0$ to eliminate double points $x$ with $g_x=1$.  Also, we have a natural isomorphism of $\pi_1^D(\Emb(I,M;I_0))$ with a computable quotient of the group ring as opposed to one arising from an abstract bundle cobordism construction.

ii) The ordering of $I_0$ enables us to unambiguously define $\sigma_x$ and $g_x$.

iii) We note that the Dax group $\pi_1^D(\Emb(S^1,M; S^1_0))$, has an extra relation from being able to cancel double points of $F_0$ by going around the $S^1$.  Dax computed the case $M=S^1\times S^3$, P. 369 \cite{Da}.  See also \cite{AS} and \cite{BG} for the case $M=S^1\times S^3$.  \end{remarks}

\begin{question} What is the relation between the Dax kernal and the six dimensional self intersection invariant?  \end{question}

\begin{remark} Schneiderman and Teichner \cite{ST} show that for an oriented six dimensional manifold $P$ the self intersection invariant $\mu_3:\pi_1(P)\to \BZ[\pi_1(P)]/<g+g^{-1},1>$ specializes to a map $\mu_3:\pi_3(N)\to \mF_2 T_N$, when $P=N\times I$ and where $T_N$ is the vector space with basis the non trivial torsion elements of $\pi_1(N)$ and $\mF_2$ is the field with two elements.  Our setting is both similar and different in that we are looking at an \emph{ordered} self intersection of mapped \emph{3-balls with fixed boundary} into $M\times I\times I$.  As indicated in Theorem \ref{connect sum} the Dax kernal can be nontrivial, e.g. in manifolds with $\pi_1(M)=\BZ$.\end{remark}

\begin{remarks}  i) Syunji Moriya \cite{Mo} shows that for certain simply connected 4-manifolds M, $\pi_1(\Emb(S^1, M))\cong H_2(M,\BZ)$.

ii) See Danica Kosanovic's  thesis \cite{Ko1} and paper  \cite{Ko2} for results on $\Emb(I,M)$ for general manifolds $M$. \end{remarks}

\section{From Discs to Paths}

\begin{definition}  Let $D_0$ be a properly embedded disc in $M$ with  dual sphere $G$.  Let $\mD$ be the set of isotopy classes $\rel \partial$ of discs homotopic $\rel \partial$ to $D_0$.  If $D_1, D_2\in \mD$, then define $D_1+D_2=D_3$ so that $D_3$ is the realization of a tubed surface whose sector form data is the concatenation of that of $D_1$ and $D_2$.  This means that if $D_1$ (resp. $D_2)$ has $n_1$ (resp. $n_2$) sectors with data then $D_3$ has $n_1+n_2$ sectors with the corresponding data. \end{definition}

\begin{proposition}  $\mD$ is an abelian group with unit $[D_0]$ under the operation $+$.\end{proposition}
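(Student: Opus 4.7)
The plan is to verify the four needed properties in turn: well-definedness of $+$, existence of identity, associativity, commutativity, and existence of inverses, relying throughout on Theorem \ref{srf}, Lemma \ref{normal permutation}, and Lemma \ref{alpha cancellation}.

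First I would reduce to self-referential form. By Theorem \ref{srf}, every element of $\mD$ has a representative in self-referential form with data $(\lambda_1,\dots,\lambda_n,\sigma_1 g_1,\dots,\sigma_k g_k)$. For the identity axiom, the trivial tubed surface (empty sector data) realizes $D_0$, and its concatenation with the data of any $D_1$ produces precisely a tubed surface for $D_1$; hence $[D_0]+[D_1]=[D_1]$. Associativity is immediate since concatenation of finite ordered sector sequences is associative. For commutativity, apply Lemma \ref{normal permutation}: any two adjacent sectors of a sector-form presentation can be interchanged without altering the isotopy class of the realization, so repeated permutation shows that the concatenated data for $[D_1]+[D_2]$ and $[D_2]+[D_1]$ have isotopic realizations.

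For existence of inverses, given a representative of $[D_1]$ in self-referential form with data $(\lambda_1,\dots,\lambda_n,\sigma_1 g_1,\dots,\sigma_k g_k)$, define $-[D_1]$ to be the class of the disc in self-referential form with data $(\lambda_1,\dots,\lambda_n,-\sigma_1 g_1,\dots,-\sigma_k g_k)$. The concatenated data contains, after using commutativity to reorder, the sequence $(\lambda_1,\lambda_1,\dots,\lambda_n,\lambda_n,\sigma_1 g_1,-\sigma_1 g_1,\dots,\sigma_k g_k,-\sigma_k g_k)$. Each pair $\sigma_i g_i,-\sigma_i g_i$ is annihilated by Lemma \ref{alpha cancellation}. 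Each pair $\lambda_i,\lambda_i$ is annihilated by first exchanging a pair of double tubes representing the same $2$-torsion element for a pair of single tubes (the Section 6 \cite{Ga} move invoked in step (ii) of the proof of Theorem \ref{srf}) and then eliminating the resulting $\alpha$-curves via Lemma \ref{alpha to srf} and Lemma \ref{alpha cancellation}. After all cancellations the concatenated data is empty, yielding $[D_0]$.

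The main obstacle is well-definedness of $+$ on isotopy classes, since a disc admits many sector-form presentations. The strategy is as follows: given two sector-form presentations of the same class $[D_1]$, they are related by a finite composition of the moves of Section 2 (tube sliding, finger and tube locus free Whitney moves, the restricted reordering of Lemma \ref{normal permutation}, and the operations of Lemmas \ref{alpha cancellation}, \ref{stabilization}, \ref{alpha stabilization}, and Corollary \ref{alpha addition}). Each of these moves is local in the approach interval: it can be performed in a subinterval without disturbing sectors lying outside that subinterval. Consequently, if one performs any such move on the $D_1$-portion of a concatenated presentation of $D_1+D_2$, the $D_2$-portion is unaffected and the overall realization changes by the same localized isotopy. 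Symmetry in $D_2$ completes the argument. A secondary subtlety is that concatenated data may no longer be in self-referential form (for instance $\lambda$'s may be repeated or a $\sigma g$ may cancel with $-\sigma g$), but this is purely a presentation issue: the concatenated data is still a valid tubed surface in sector form, and Theorem \ref{srf} can be reapplied to recover a self-referential representative when desired.
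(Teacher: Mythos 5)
Your treatment of the identity, associativity, and commutativity (via Lemma \ref{normal permutation}) matches the paper, and your explicit construction of inverses by negating the $\alpha$-data and doubling out the $\lambda$-data is a reasonable elaboration of what the paper dismisses as immediate. The problem is your well-definedness argument, which is the one point the paper actually regards as requiring proof. You assert that any two sector-form presentations of the same isotopy class are related by a finite composition of the moves of Section 2 (tube slides, Whitney moves, the restricted reordering, the stabilization lemmas). No such completeness statement is proved anywhere in the paper: Theorem \ref{srf} and Proposition \ref{whats left} show that \emph{some} sector-form presentation exists for each class, and the listed moves are shown to \emph{preserve} the isotopy class of the realization, but there is no converse ``Reidemeister-type'' theorem asserting that isotopic realizations have move-equivalent data. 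Establishing such a calculus would itself be a substantial theorem, so your argument rests on an unproven and nontrivial claim.

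The paper's argument for well-definedness is instead directly geometric and avoids any move calculus: given an isotopy from $D_1$ to $D_1'$ supported away from a neighborhood of $\partial D_0$, one first isotopes the sector data of $D_2$ (everything except its framed embedded paths) to lie very close to $\partial D_0$, so that the ambient isotopy carrying $D_1$ to $D_1'$ can be chosen to miss it; the framed embedded paths of $D_2$ may be dragged along by that ambient isotopy, but the light bulb lemma (using the dual sphere $G$) lets one isotope them back to their original positions without creating intersections with $D_1'$. If you want to repair your proof, you should replace the move-equivalence claim with an argument of this kind that works with an actual ambient isotopy between representatives.
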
 

\begin{proof} We need to show that $D_3$ is independent of the choice of representatives of $D_1$ and $D_2$, the other conditions being immediate.  In particular, by Lemma \ref{normal permutation} $D_3$ is independent of the concatenation order and hence $\mD$ is abelian.  We can assume that $D_1$ coincides with $D_0$ near their boundaries, so an isotopy of $D_1$ to $D_1'$ can be chosen to be supported away from some neighborhood of $\partial D_0$.  Since the data of $D_2$, except for its framed embedded paths, can be isotoped within their sectors to be very close to $\partial D_0$,  we see that the isotopy of $D_1$ can be chosen to avoid it.  While the framed embedded paths associated to $D_2$ may get moved during the ambient isotopy of $D_1$ to $D_1'$, the light bulb lemma enables them to isotope back to their original positions without introducing intersections with $D_1'$. \end{proof}

\begin{remark} $\mD$ is a torsor, where $\BZ[\pi_1(M)\setminus 1]$ and $\BZ[T_2]$ act on $\mD$.  Here $T_2$ is the set of non trivial 2-torsion elements.  The former acts by attaching the appropriate self-referential discs and the latter by attaching the appropriate double tubes.  \end{remark} 

\begin{notation}  If $\lambda$ is a framed embedded path with endpoints in $D_0$ representing a nontrivial 2-torsion element of $\pi_1(M)$, then let $\hat\lambda$ denote this element and  let $D_\lambda$ denote the realization of the self-referential form tubed surface whose data consists exactly of $(\lambda)$. If $1\neq g\in \pi_1(M)$, then let $D_g$ (resp. $D_{-g})$ denote the realization of the self-referential form tubed surface whose data only consists exactly of $(+g)$ (resp. $(-g)$.\end{notation}

\begin{remark}  Since an element of $\mD$ can be put into self-referential form it follows that the $D_g$'s and $D_\lambda$'s are generators of $\mD$.\end{remark}

\begin{definition} \label{novel} Let  $D_0$ be a properly embedded disc in the 4-manifold $M$, not necessarily with a dual sphere.  View $D_0$ as $I\times I$ with $I_0$ denoting $I\times 1/2$ and $ \mF_0$  this product foliation. If $D$ is another properly embedded disc that agrees with $D_0$ along $\partial D_0$, then $D$ gives rise to an  element $[\phi_{D_0}(D)]\in \pi_1(\Emb(I,M;I_0))$, where $\Emb(I,M;I_0)$ is the space of smooth embeddings of $I$ based at $I_0$.     To construct $\phi_{D_0}(D)$, first isotope $D$ to coincide with $D_0$ near $\partial D_0$.  Next view $ D=I\times I$ where this foliation $\mF$ coincides with $\mF_0$ near  $\partial D_0$.   Use $D_0$ to inform how to modify $\mF$ to a  loop $\phi_{D_0}(D)$ in $\Emb(I,M;I_0)$ based at $ I_0$. To do this first define $\beta\in \Emb(I,M)$ as follows. For $t\in [0,1/4], \beta_t$ traces $I\times(1/2-2t)$ using $\mF_0$; for $t\in [1/4,3/4], \beta_t$ traces $I\times(2t-.5)$ using $\mF$; and for $t\in [3/4,1], \beta_t$ traces $I\times(1.5-2t)$ using $\mF_0$.  Naturally modify the ends of each $\beta_t$ to coincide with $I_0$ near $ \beta_t(0)$ and $\beta_t(1)$ to obtain $\phi_{D_0}(D)$ with $[\phi_{D_0}(D)]$ denoting the corresponding class in $\pi_1(\Emb(I,M;I_0))$.\end{definition}

\begin{remark} For the sake of exposition, $D_0$ was parametrized as a disc with corners.  The definition is readily modified to the smooth setting.\end{remark}

Since $\Diff(D^2\fix \partial)$ is connected \cite{Sm3} it follows that $\phi_{D_0}$ is well defined and depends only on $D_0$ and $I_0$.  If $\mD$ is the set of isotopy classes of discs homotopic to $D_0 \rel \partial$, then together with the Dax isomorphism theorem we obtain the following result.  

\begin{theorem} Let $D_0$ be a  properly embedded disc in the oriented 4-manifold, $I_0$ an oriented properly embedded arc in $D_0$  and $\mD$ be the isotopy classes of embedded discs homotopic $\rel \partial$ to $D_0$, then there is a canonical function $\phi_{D_0}: \mD \to \BZ[\pi_1(M)\setminus 1]/D(I_0)$ such that if $D$ is a embedded disc homotopic rel $\partial$ to $D_0$, then $\phi_{D_0}([D])\neq 0$ implies $D$ is not isotopic to $D_0\rel\partial$.\end{theorem}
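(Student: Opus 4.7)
The plan is to assemble the needed function from three ingredients: the construction of $[\phi_{D_0}(D)] \in \pi_1(\Emb(I,M;I_0))$ already given in Definition \ref{novel}, the Dax Isomorphism Theorem, and an isotopy-invariance check for the construction. Concretely, I will verify (a) that whenever $D$ is homotopic rel $\partial$ to $D_0$ the class $[\phi_{D_0}(D)]$ lies in the Dax subgroup $\pi_1^D(\Emb(I,M;I_0))$, (b) that the class depends only on the isotopy class of $D$ rel $\partial$, and (c) that $\phi_{D_0}(D_0) = 1_{I_0}$. Then the composition of $\phi_{D_0}$ with the canonical isomorphism $\pi_1^D(\Emb(I,M;I_0)) \cong \BZ[\pi_1(M)\setminus 1]/D(I_0)$ of Theorem \ref{dax} gives the required function, and vanishing on $[D_0]$ yields the obstruction property automatically.

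For (a) and (c), I would promote the tracing construction of Definition \ref{novel} to a family indexed by a homotopy. If $H_u: I\times I \to M$, $u\in [0,1]$, is a homotopy rel $\partial$ with $H_0 = D_0$ and $H_1$ a parametrization of $D$, then applying the same three-stage recipe (tracing the half-levels with $\mF_0$, sweeping across $D$ by $H_u$, tracing back with $\mF_0$) with $H_u$ in place of the embedding $D$ produces, for each $u$, a loop $\phi_u$ in $\Maps(I,M;I_0)$, giving a based homotopy from the constant loop $\phi_0 = 1_{I_0}$ to $\phi_{D_0}(D)$. This simultaneously shows the class is in the Dax subgroup and that $\phi_{D_0}(D_0) = 1_{I_0}$. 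For (b), an ambient isotopy $h_s$ of $M$ rel $\partial$ with $h_1(D) = D'$ conjugates the loop $\phi_{D_0}(D)$ to one built from the image parametrization $h_1 \circ (I\times I \to D) \to D'$; connectedness of $\Diff(D^2 \fix \partial)$ (Smale \cite{Sm3}) then supplies a path of parametrizations from this induced parametrization of $D'$ back to the originally chosen one, and the naturality of the tracing construction under reparametrization turns this path into a homotopy of loops in $\Emb(I,M;I_0)$ between the two recipes for $\phi_{D_0}(D')$.

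The main obstacle is bookkeeping around the endpoints: the tracing construction must at each stage be adjusted to agree with $I_0$ near $\partial I$, and one must ensure that the families produced in (a) and (b) remain in $\Emb(I,M;I_0)$ (respectively $\Maps(I,M;I_0)$) with this boundary condition. This is where the hypothesis that $D$ coincides with $D_0$ near $\partial D_0$ is used, and it only involves a standard collar-by-collar interpolation — no genuinely new geometric input beyond what is already implicit in Definition \ref{novel}. Once (a), (b), (c) are in hand, the composition $\mD \xrightarrow{\;[\phi_{D_0}]\;} \pi_1^D(\Emb(I,M;I_0)) \xrightarrow{\;d\;} \BZ[\pi_1(M)\setminus 1]/D(I_0)$ is the canonical function; since $\phi_{D_0}([D_0]) = 0$ and $\phi_{D_0}$ is well defined on isotopy classes, $\phi_{D_0}([D]) \neq 0$ forces $[D] \neq [D_0]$, which is the desired obstruction.
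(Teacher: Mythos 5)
Your proposal is correct and takes essentially the same route as the paper, which likewise builds the invariant from Definition \ref{novel}, invokes the connectedness of $\Diff(D^2\fix\partial)$ for well-definedness on isotopy classes, and composes with the Dax isomorphism; the paper leaves the verifications (a)--(c) implicit, and your filling-in of them (in particular, promoting the tracing construction along the homotopy rel $\partial$ to see that the class lands in the Dax subgroup $\pi_1^D(\Emb(I,M;I_0))$) is exactly the intended argument.
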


We have more algebraic structure when $D_0$ has a dual sphere.  The following is a sharper form of Theorem \ref{main} ii) of the introduction.



\begin{theorem}  \label{main ii} Let  $D_0\subset M$ be a properly embedded disc with the  dual sphere $G$ and $\mD$ the isotopy classes of discs homotopic to $D_0\rel \partial D_0$.  Then $\mD$ is an abelian group with zero element $[D_0]$ and there exists a natural homomorphism $\phi_{D_0}:\mD\to \BZ[\pi_1(M)\setminus 1]/D(I_0)\cong \pi_1^D(\Emb(I,M; I_0))$, where $D(I_0)$ is the Dax kernal, such that the generators of $\mD$ are mapped as follows. 

i) $\phi_{D_0}([D_\lambda])=\hat \lambda$

ii) $\phi_{D_0}([D_g])=g+g^{-1}$.\end{theorem}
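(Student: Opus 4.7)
The plan is to combine Construction \ref{obstruction} with the Dax isomorphism theorem, reducing the proof to an explicit local analysis of the loop $\phi_{D_0}(D)$ on the generators $[D_g]$ and $[D_\lambda]$ of $\mathcal{D}$ coming from the self-referential form theorem.

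First I would verify that $\phi_{D_0}$ is a well-defined function $\mathcal{D} \to \pi_1^D(\Emb(I,M;I_0))$. Given $D \in \mathcal{D}$, the homotopy $D \simeq D_0$ rel $\partial$ supplies, via Definition \ref{novel} applied parametrically across that homotopy, a null-homotopy of $\phi_{D_0}(D)$ in $\Maps(I,M;I_0)$; thus $[\phi_{D_0}(D)]$ lies in the Dax subgroup. Next I would prove additivity. Given $[D_1], [D_2] \in \mathcal{D}$, choose representatives in self-referential form whose sector data occupy disjoint subintervals of the approach interval. A representative of $[D_1]+[D_2]$ built by concatenating sector data then agrees with $D_0$ along a central leaf and differs from it only on two disjoint neighborhoods, so Definition \ref{novel} exhibits $\phi_{D_0}(D_1+D_2)$ up to homotopy in $\Emb(I,M;I_0)$ as the concatenation $\phi_{D_0}(D_1) \cdot \phi_{D_0}(D_2)$. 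This uses only the disjointness already exploited in Lemma \ref{normal permutation}.

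The substantive content is the computation on generators. For $[D_g]$, I would foliate $D_g$ so that each leaf agrees with a leaf of the product foliation of $D_0$ except along a short sub-arc that detours out along the attaching tube, around the small sphere $P = \partial B$, and back into $D_0$. Because the attaching path $\tau = \tau_0 \ast \tau_1$ is self-referential, passing through $B$ en route to $P$, the detouring sub-arc of the moving leaf must traverse a small neighborhood of the point $B \cap \tau$ twice: once along the outgoing $\tau_1$-side representing $g$, and once along the returning $\tau_0$-side after the leaf's basepoint has translated across the tube. Matching each traversal against Definition \ref{spin} in a 4-ball neighborhood of $B$ identifies the two local contributions as spin moves along paths representing $g$ and $g^{-1}$ respectively. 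Under the Dax isomorphism of Theorem \ref{dax}, the composite $\tau_g \cdot \tau_{g^{-1}}$ corresponds to $g + g^{-1} \in \BZ[\pi_1(M)\setminus 1]/D(I_0)$, giving $\phi_{D_0}([D_g]) = g + g^{-1}$. For $[D_\lambda]$, the analogous analysis of a double tube with data $(\beta,\gamma,\lambda)$ produces a single spin move of class $\hat\lambda$: the two ends of the double tube contribute a pair that collapses to one Dax generator rather than a formal sum $\hat\lambda + \hat\lambda^{-1}$, thanks to the 2-torsion hypothesis.

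The main obstacle will be step three: rigorously matching the foliation-induced loop $\phi_{D_0}(D_g)$ with the composite $\tau_g \cdot \tau_{g^{-1}}$. The geometric picture is clear---each sweeping leaf of $D_g$ must wrap around $P$ in both directions---but extracting from it exactly $g$ and $g^{-1}$, as opposed to $\pm g \pm g^{-1}$, requires careful local coordinates at $B$ and $P$, precise tracking of the tube framing and of the orientation of $\tau$, and a verification that the coherence sign $\sigma \in \pm$ in the definition of $D_g$ drops out. The latter is consistent with $g + g^{-1}$ being invariant under $g \mapsto g^{-1}$, providing a useful sanity check on the bookkeeping.
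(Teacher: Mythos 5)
Your setup (well-definedness via the null-homotopy supplied by the homotopy $D\simeq D_0$ rel $\partial$, additivity via concatenation of disjointly supported sector data) and your treatment of part ii) track the paper closely: the paper likewise reduces ii) to recognizing the two passages of the sweeping arc through the self-referential tube as the two double points of a null-homotopy contributing $g^{-1}$ and $g$, and the bulk of its proof is exactly the orientation bookkeeping you defer (Figures \ref{Figure4,2}--\ref{Figure4,6}, comparing $DF_0(T_{b_0})\oplus DF_0(T_{a_0})$ with the standard orientation of $M\times I^2$ to get $+1$ at both double points). Deferring that computation is acceptable in a plan, and you correctly predict its outcome and correctly identify it as the crux.

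Part i), however, has a genuine gap. You assert that a leaf-by-leaf analysis of a single double tube $(\beta,\gamma,\lambda)$ ``produces a single spin move of class $\hat\lambda$'' because the two contributions ``collapse to one Dax generator \ldots thanks to the 2-torsion hypothesis.'' That mechanism cannot be right as stated: 2-torsion gives $\hat\lambda=\hat\lambda^{-1}$ as group elements, so a pair of spins would evaluate to $\hat\lambda+\hat\lambda^{-1}=2\hat\lambda$ in $\BZ[\pi_1(M)\setminus 1]/D(I_0)$, not to $\hat\lambda$. To land on the odd element $\hat\lambda$ you would need the loop $\phi_{D_0}(D_\lambda)$ to be homotopic to a \emph{single} spin $\tau_{\hat\lambda}$, i.e.\ a null-homotopy with exactly one essential double point, and nothing in your sketch produces that; a double tube is not geometrically ``analogous'' to a self-referential disc in the way your argument for ii) requires. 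The paper avoids this direct computation entirely: it proves the relation $2[D_\lambda]=[D_{\hat\lambda}]$ in $\mD$ by an explicit isotopy (two double tubes with the same $\lambda$ trade, via the operation of \S 6 of \cite{Ga} and a tube-sliding/deletion argument, for one self-referential disc of type $\hat\lambda$; see Figure \ref{Figure4,7}), and then deduces $2\phi_{D_0}(D_\lambda)=\phi_{D_0}(D_{\hat\lambda})=\hat\lambda+\hat\lambda^{-1}=2\hat\lambda$ from part ii) and the homomorphism property. You should either supply an honest direct computation of $\phi_{D_0}(D_\lambda)$ as a single spin, or adopt an indirect argument of this kind.
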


\setlength{\tabcolsep}{60pt}
\begin{figure}
 \centering
\begin{tabular}{ c c }
 $\includegraphics[width=5in]{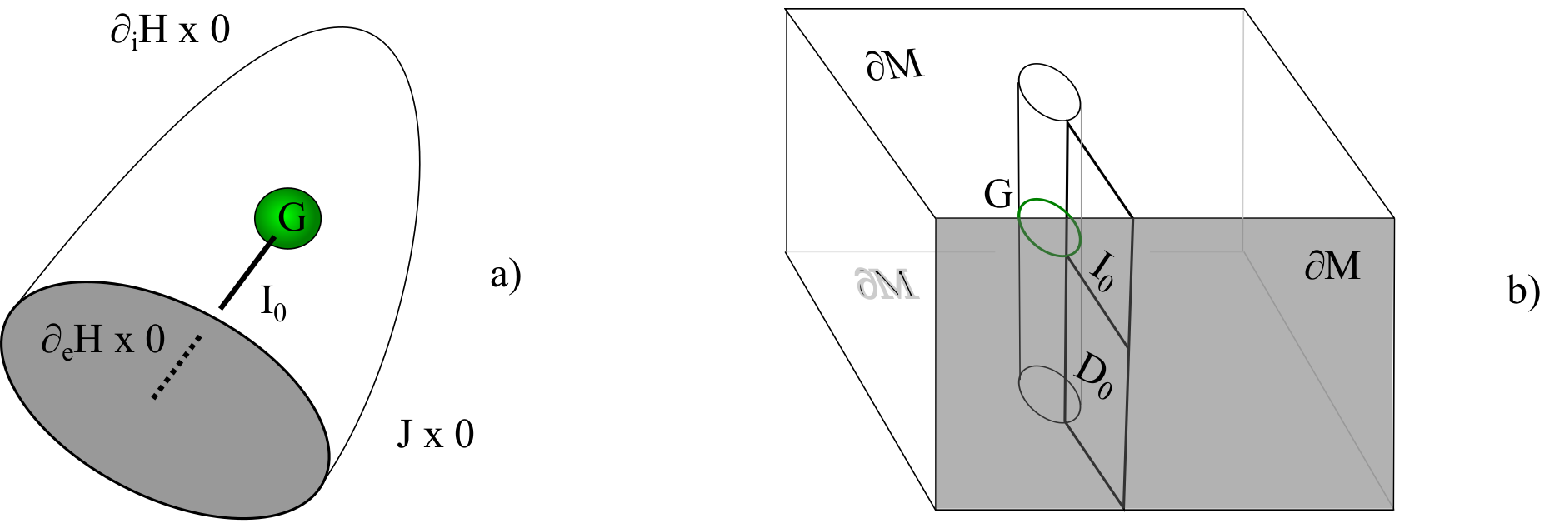}$  
\end{tabular}
 \caption[(a) X; (b) Y]{\label{Figure4,1}
 \begin{tabular}[t]{ @{} r @{\ } l @{}}
\end{tabular}}
\end{figure}

\begin{proof}  We first set the local picture.  View $N(D_0\cup G)$ as the manifold with corners $J\times [-1,1]$, where $J=H\setminus \inte(B)$, where $B$ is an open 3-ball and $H$ is a half 3-ball with $\partial H = \partial_e H\cup \partial_i H$, the \emph{external} and \emph{internal boundaries}.  Also, $\partial M\cap J\times [-1,1]=(\partial_e H\cup \partial B)\times [-1,1]\cup J\times \{-1,1\}$.  Here $G_t:=\partial B\times t$ and $N(G)\cap \partial M=G\times [-1,1]$.  $D_0$ is a vertical disc in $J\times [-1,1]$ with $I_t:=D_0\cap J\times t$,  where $I_0$ is an arc from $\partial_e H\times 0$ to $G:=G_0$.  See Figure \ref{Figure4,1} a).  Figure \ref{Figure4,1} b) shows a one dimension lower version.  In that figure $G$ is a circle and $D_0$ is a disc.  $\partial M$ is the union of $G\times [-1,1]$ and the shaded face which is the analogue of $\partial_e(H)\times [-1,1]$ and the top and bottom faces.

We now define  $\phi_{D_0}$ from this point of view.  If $D$ is a properly embedded disc that coincides with $D_0$ near $\partial D$, then the $I_t$ fibering of $D_0$ induces $\phi_{D_0}(D)\in\pi_1^D(\Emb(I,M;I_0))$ as follows.  It first induces a map $\phi_{D_0}':[-1,1]\to (\Maps: [-1,1]\to \Emb(I,M))$.  The projection of $I_t$ to $I_0$  then informs how to close up to a loop and modify the ends to coincide with $I_0$ to obtain a well defined element of $\pi_1^D(\Emb(I,M;I_0))$.  It is a homomorphism since by construction $\phi_{D_0}([D_0])]=[1_{I_0}]$.  Since addition is given by concatenation of sector forms it follows that $\phi_{D_0}([D_1]+[D_2])=\phi_{D_0}([D_1])+\phi_{D_0}([D_2])$.

\setlength{\tabcolsep}{60pt}
\begin{figure}
 \centering
\begin{tabular}{ c c }
 $\includegraphics[width=1.5in]{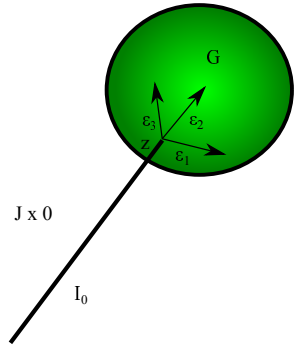}$  
\end{tabular}
 \caption[(a) X; (b) Y]{\label{Figure4,2}
 \begin{tabular}[t]{ @{} r @{\ } l @{}}
Orientations on $D_0$ and $G$
\end{tabular}}
\end{figure}

\setlength{\tabcolsep}{60pt}
\begin{figure}
 \centering
\begin{tabular}{ c c }
 $\includegraphics[width=5in]{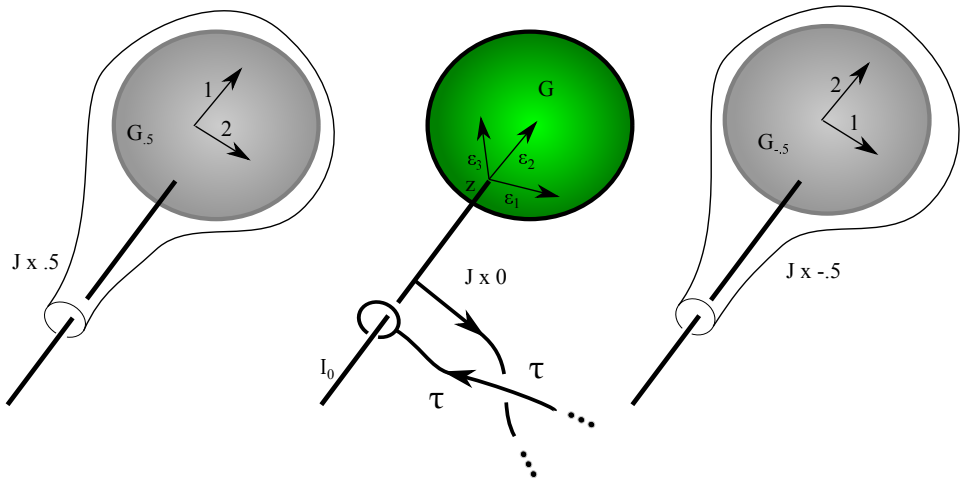}$  
\end{tabular}
 \caption[(a) X; (b) Y]{\label{Figure4,3}
 \begin{tabular}[t]{ @{} r @{\ } l @{}}
Orientation on $P(\alpha)$
\end{tabular}}
\end{figure}

\setlength{\tabcolsep}{60pt}
\begin{figure}
 \centering
\begin{tabular}{ c c }
 $\includegraphics[width=4in]{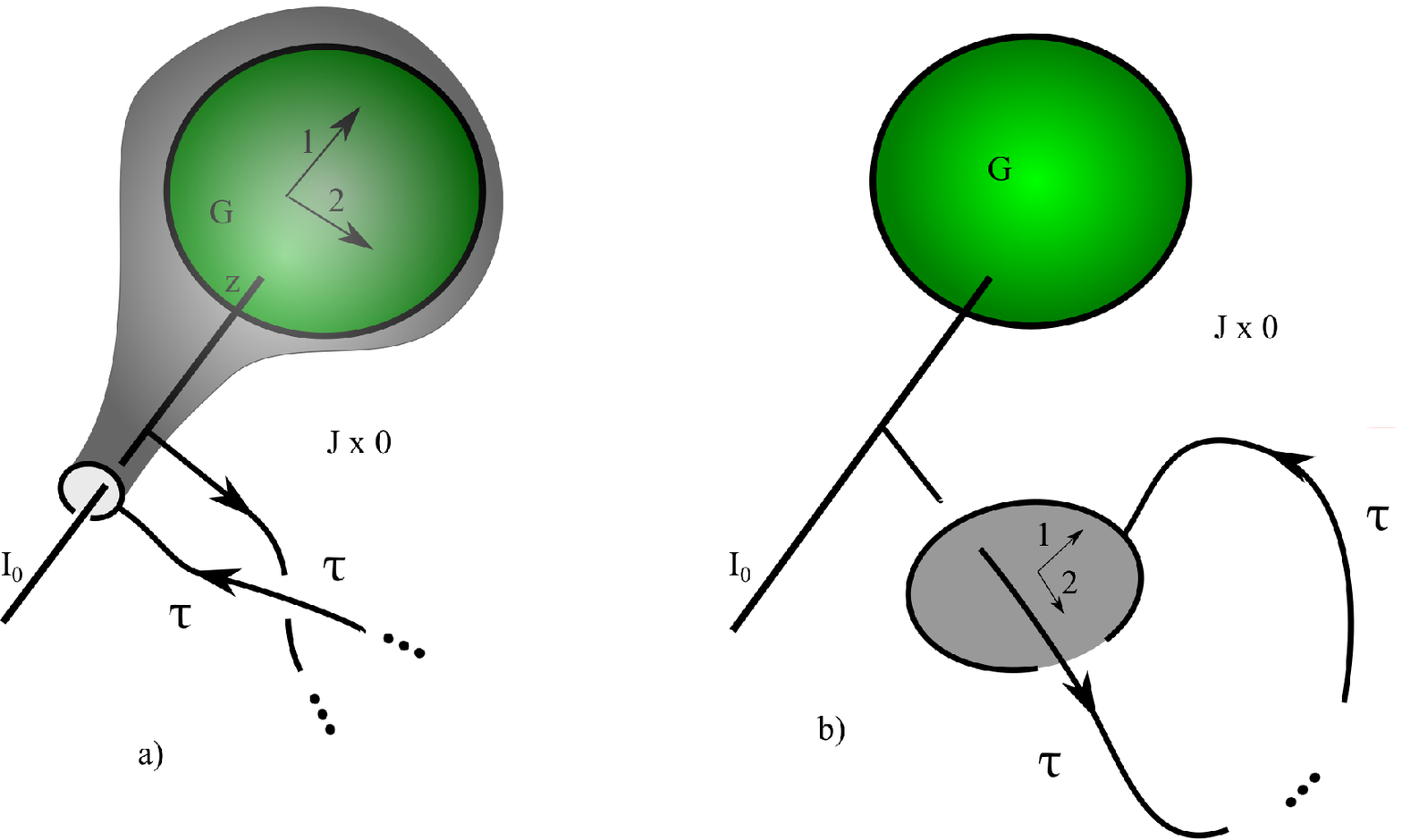}$  
\end{tabular}
 \caption[(a) X; (b) Y]{\label{Figure4,4}
 \begin{tabular}[t]{ @{} r @{\ } l @{}}
Isotoping to a self-referential disc I
\end{tabular}}
\end{figure}

\setlength{\tabcolsep}{60pt}
\begin{figure}
 \centering
\begin{tabular}{ c c }
 $\includegraphics[width=1.75in]{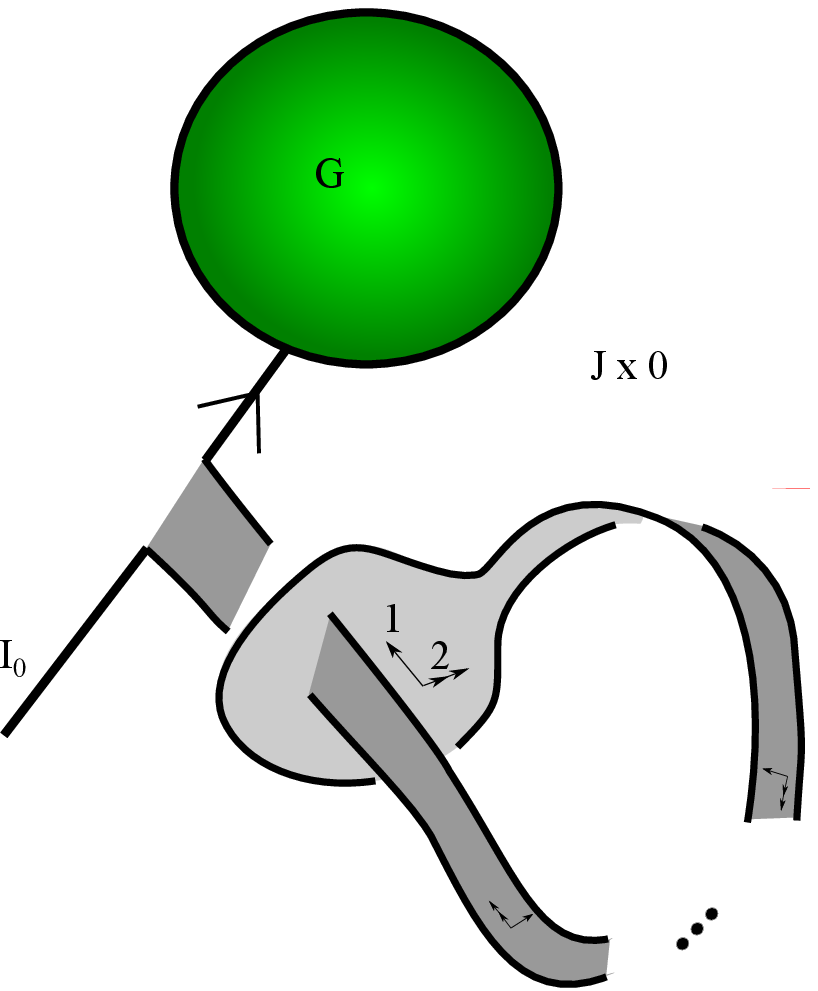}$  
\end{tabular}
 \caption[(a) X; (b) Y]{\label{Figure4,5}
 \begin{tabular}[t]{ @{} r @{\ } l @{}}
Isotoping to a self-referential disc II
\end{tabular}}
\end{figure}

We now show ii).  Given $D_g\in \mD$, represent $\phi_{D_0}(D_g)$ as $\alpha_t$ a loop in $\Emb(I,M;I_0)$.  As in \S 3 we construct a homotopy $\alpha_{t,u}$ in $ \Maps(I,M;I_0)$ from $ \alpha_t $ to $1_{I_0}$ and then compute $d(\alpha_{t,u})$.  To compute the required intersection numbers we need to establish and keep track of  orientations.  First $J\times [-1,1]$ has the standard orientation $(\epsilon_1, \epsilon_2, \epsilon_3, \epsilon_4)$ induced from $\BR^3\times \BR$.  Figure \ref{Figure4,2} shows our orientations on $D_0$ and $G$ as seen from $J\times 0$.  Here $T_z(D_0)$ is oriented by $(\epsilon_2,\epsilon_4)$ and $T_z(G)$ is oriented by $(\epsilon_3,\epsilon_1)$.  Note that $<D_0,G>_z=1$.  Recall that $D_g$ is obtained by coherently tubing $D_0$ with the oriented sphere $P(\alpha)$ along a path $\tau$ representing $g$,  so to know the orientation on $D_g$ it remains to know the orientation of $P(\alpha)$ which is  shown in Figure \ref{Figure4,3}.  The numbers next to the vectors indicate which goes first.  Recall that $P(\alpha)$ is obtained by tubing two copies of $G$, say $G_{ - .5}$ and $G_{.5} $ where the orientation of $G\times -.5$ (resp. $G\times +.5$) disagrees (resp. agrees) with that of $G$.  Figure \ref{Figure4,4} a) shows the projection of $P(\alpha)\cup D_0\cup \tau$ to $J\times 0$; the solid line indicating intersection with the present and shading indicates projection from either the past or future.  Here $J_t; t<0, t=0$ or $t>0$ refers to the past, present or future.  The orientation shown is that of the projection of the disc from the future.  Figure \ref{Figure4,4} b) is another projection after an isotopy of $P(\alpha)\cup \tau$.  To obtain the full picture of this $D_g$ we coherently connect $D_0$ to this isotoped $P(\alpha)$ by the tube $T_\tau$ that follows the isotoped $\tau$.  See Figure \ref{Figure4,5}.

\setlength{\tabcolsep}{60pt}
\begin{figure}
 \centering
\begin{tabular}{ c c }
 $\includegraphics[width=4in]{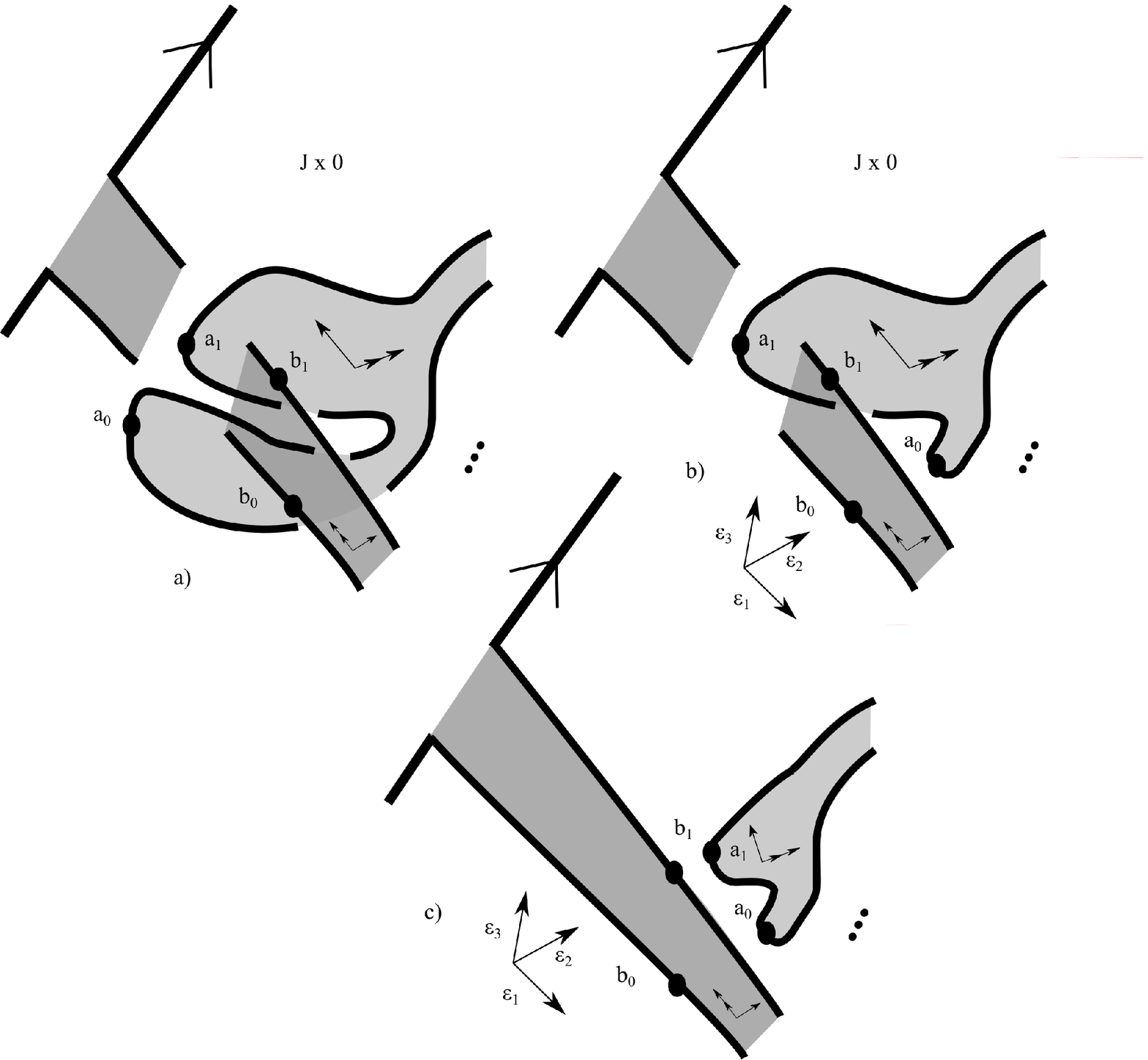}$  
\end{tabular}
 \caption[(a) X; (b) Y]{\label{Figure4,6}
 \begin{tabular}[t]{ @{} r @{\ } l @{}}
 Computing the Intersection Numbers
\end{tabular}}
\end{figure}

We now describe $\alpha_{t,u}$.  The passage from the original $D_g$ to the above one induces a homotopy of $\alpha_{t,0}$ to $\alpha_{t,1/4}$.  Here is a description of the loop $\alpha_{t,1/4}, t\in [-1,1]$.  Starting at $\alpha_{-1,1/4} =I_0$, keeping neighborhood of $\partial I_0$ fixed, $\alpha_{t,1/4} $ sweeps out along $T_\tau$ staying slightly in the past, then remaining slightly in the past  continues across $P(\alpha)$ to reach $\alpha_{1/2,1/4}$, the dark line in Figure \ref{Figure4,5} which is totally in the present.  It then sweeps back across $P(\alpha)$ staying slightly in the future and then back across $T_\tau$ before returning to $I_0=\alpha_{1,1/4}$.  Our homotopy $\alpha_{t,u}$ will have the feature that for all $u, \alpha_{1/2,u}\cap J\times [-1,1]\subset J\times 0$.  If $D_g(u)$ denotes the image of $\alpha_{t,u}, t\in [-1,1]$, then  Figure \ref{Figure4,5} shows the projection of $D_g(1/4)$ to $J\times 0$.  We now homotope $D_g(1/4)$ to $D_g(3/8)$ as shown in Figure \ref{Figure4,6} a).  Here we abuse notation by conflating the domain with the image.  While the embedded part of $D_g(u)$ now becomes  immersed, the homotopy induces a homotopy of $\alpha_{t,1/4}$ to $\alpha_{t, 3/8}$  as loops in $\Emb(I,M; I_0)$.  Figure \ref{Figure4,6} b), (resp. Figure \ref{Figure4,6} c)) shows the result of a further homotopy to $\alpha_{t,9/16}$ (resp. $\alpha_{t, 3/4}$) this time as loops in $\Maps(I,M;I_0)$.  $\alpha_{t,u}$ fails to be a loop in $\Emb(I,M;I_0)$ when $u=1/2$ and $5/8$.  This can be done so that at $u=1/2$ (resp. $u=5/8)$ there is a single self-intersection when $t=1/2$ and $s=a_0$ and $s=b_0$ (resp. $t=1/2$ and $s=a_1$ and $s=b_1$.)  Note that the loop $\alpha_{t,3/4}$ is homotopic in loops $\Emb(I,M;I_0)$ to $1_{I_0}$.  Use this homotopy to complete the construction of $\alpha_{t,u}$.

We now compute the self-intersection values.  Recall that $I_0$ is oriented to point into $G$.  Following the rules of \S 3, since $b_0<a_0$ the group element to this self-intersection is $g^{-1}$.  With notation as in \S 3 we now compute the sign of the self-intersection by comparing    $DF_{0_{b_0, 1/2, 1/2}}(T_{b_0,1/2,1/2}(I^3))\oplus DF_{0_{a_0,1/2, 1/2}}(T_{a_0,1/2,1/2}(I^3))$ with that of $T_{x_1,1/2,1/2}(M\times I^2)$ where $x_1=\alpha(1/2,1/2)(a_0)=\alpha(1/2,1/2)(b_0)$.  Parametrized as in \S 3 we have $DF_{0_{b_0,1/2,1/2}}(\partial/\partial s, \partial/ \partial t, \partial/\partial u)= (\epsilon_1, \epsilon_5, \epsilon_6)$ and $DF_{0_{a_0, 1/2,1/2}} (\partial/\partial s, \partial/\partial t, \partial/\partial u)= (\epsilon_3, \epsilon_4+\epsilon_5, \epsilon_2 + \epsilon_6)$ which as a 6-vector is equivalent to $(\epsilon_1, \epsilon_5,\epsilon_6, \epsilon_3, \epsilon_4, \epsilon_2)$ which is equivalent to the standard basis, hence the self-intersection number is $+1$.  Since $a_1<b_1$, A similar calculation shows that at the second self-intersection the group element is g and the 6-tuple of vectors is $(\epsilon_3, \epsilon_4+\epsilon_5, \epsilon_2+\epsilon_6, -\epsilon_1, \epsilon_5, \epsilon_6)$ which is equivalent to $(\epsilon_3, \epsilon_4, \epsilon_2, -\epsilon_1, \epsilon_5, \epsilon_6)$ which also gives the standard basis.  Therefore, $\phi(D_g)=d(\alpha_{t,u})=g+g^{-1}$.

\setlength{\tabcolsep}{60pt}
\begin{figure}
 \centering
\begin{tabular}{ c c }
 $\includegraphics[width=5in]{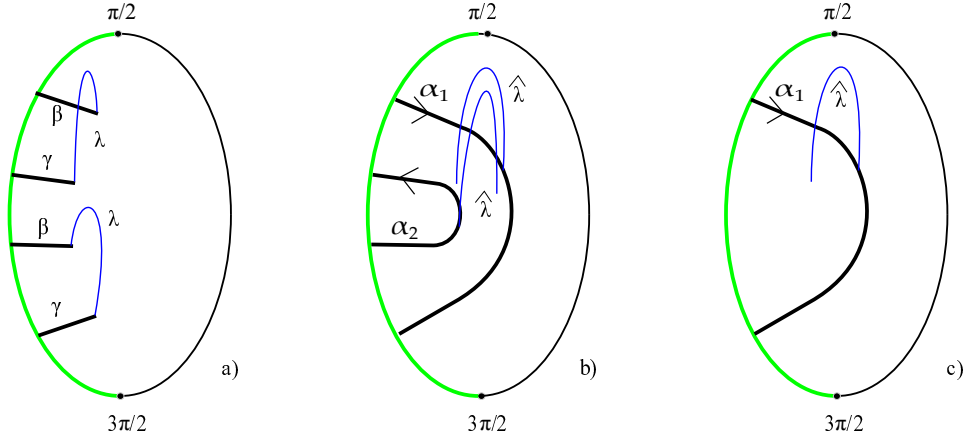}$  
\end{tabular}
 \caption[(a) X; (b) Y]{\label{Figure4,7}
 \begin{tabular}[t]{ @{} r @{\ } l @{}}
Two double tubes equals one single tube
\end{tabular}}
\end{figure}

We now show i) by proving that $2\phi_{D_0}(D_\lambda)=\phi_{D_0}(2D_\lambda) =2\hat\lambda$. Figure \ref{Figure4,7} a) shows a tubed surface with self-referential form data  $(\lambda, \lambda)$.  Figure \ref{Figure4,7} b) shows the result of applying the operation of \S 6 \cite{Ga} to this tubed surface.  Tube sliding moves allow for the $q$ point to $\alpha_2$ to be placed to either side of $\alpha_1$ and vice versa.  Note that the orientations on the $\alpha$ curves are determined by the sign convention.    As in \S 2, deleting the data corresponding to the $\alpha_2$ curve does not change the realization since it's  $q$ point lies on the far side of the approach interval.  What's left is a tubed surface of Figure \ref{Figure4,7} c) with self-referential form data $(+\hat \lambda)$ whose realization is $D_{\hat\lambda}$.  By ii), $\phi_{D_0}(D_{\hat\lambda})= 2\hat \lambda$.\end{proof}

\begin{corollary} \label{knotted disc} Let $M=S^2\times B^2\natural S^1\times B^3, D_0$ be the standard 2-disc as in Figure 2 and $g$ be a generator of $\pi_1(M)$.  Then the discs $D_{g^i}$, $i \in \BN$ are pairwise not  properly isotopic.  On the other hand each $D_{g^i}$ is concordant to $D_0$.\end{corollary}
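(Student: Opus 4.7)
The proof combines Theorem \ref{main ii} with the Dax-group computation for $M = S^2\times D^2 \natural S^1\times B^3$ stated just before Theorem \ref{connect sum}. That computation gives $\pi_1^D(\Emb(I,M;I_0)) \cong \BZ[\BZ\setminus 1]$ with trivial Dax kernel $D(I_0)$, so by Theorem \ref{main ii}(ii) the obstruction $\phi_{D_0}$ sends $[D_{g^i}] \mapsto g^i + g^{-i}$ into the free abelian group $\BZ[\BZ\setminus 1]$ on the nontrivial powers of $g$. For $1\le i<j$ the four basis monomials $g^{\pm i}, g^{\pm j}$ are distinct, so $g^i+g^{-i}\neq g^j+g^{-j}$, and also $g^i+g^{-i}\neq 0$. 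Hence $[D_{g^i}]\neq[D_{g^j}]$ in $\mD$ whenever $i\neq j$, and $[D_{g^i}]\neq[D_0]$ for $i\ge 1$, so the discs are pairwise not isotopic rel $\partial$.

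To upgrade from rel-$\partial$ nonisotopy to the claim that no two $D_{g^i}$ are properly isotopic, I would use that all the $D_{g^i}$ share the boundary circle $\partial D_0\subset\partial V$. Any proper ambient isotopy carrying $D_{g^i}$ to $D_{g^j}$ restricts at time one to a self-diffeomorphism of $\partial V$ taking $\partial D_0$ to itself; composing with an ambient isotopy of $V$ supported near $\partial V$ that restores $\partial D_0$ pointwise (available since $\partial D_0$ is an unknotted circle in the 3-manifold $\partial V$) and applying isotopy extension yields an isotopy rel a neighborhood of $\partial D_0$, contradicting the previous paragraph.

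For the concordance claim, each $D_{g^i}$ lies in $\mD$ and is therefore homotopic to $D_0$ rel $\partial$, and the two share the boundary dual sphere $G$. A concordance in $V\times I$ is obtained by a Norman-trick argument: realize the rel-$\partial$ homotopy as a generic immersion $D^2\times I \hookrightarrow V\times I$ with $1$-dimensional transverse self-intersection, and cancel the double arcs pairwise by tubing into small parallel push-offs of $G$ (which has trivial normal bundle); the result is a properly embedded concordance $3$-disc with boundary $\partial D_0\times I$. The main obstacle, in both the upgrade to proper nonisotopy and the concordance construction, is careful boundary bookkeeping near $\partial D_0\times I$; the nonisotopy conclusion itself is essentially immediate once Theorem \ref{main ii} is combined with the free generation of $\pi_1^D(\Emb(I,M;I_0))$ by the spins $\tau_{g^k}$.
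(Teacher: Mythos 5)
Your first paragraph is exactly the paper's argument for non-isotopy: the Dax kernel vanishes for the boundary connect sum, so by Theorem \ref{main ii}(ii) the classes $\phi_{D_0}([D_{g^i}])=g^i+g^{-i}$ are pairwise distinct nonzero elements of $\BZ[\BZ\setminus 1]$. (In fact your citation is more accurate than the paper's own: the proof in the text invokes Theorem \ref{connect sum}, which is the \emph{connected}-sum case with nontrivial Dax kernel, whereas the relevant computation is the unnumbered theorem just before it for $S^1\times B^3\natural S^2\times D^2$.) Your caveat about upgrading rel-$\partial$ non-isotopy to proper non-isotopy is reasonable in spirit, but the fix you sketch is incomplete: restoring $\partial D_0$ pointwise at time one does not make the isotopy rel $\partial$, since the ambient isotopy traces out a loop of embeddings of $\partial D_0$ in $\partial V$ that need not be contractible, and that loop could in principle contribute a correction to the invariant. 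The paper works rel $\partial$ throughout and does not address this point either, so this is a shared ambiguity rather than a defect unique to your write-up.

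The concordance half is where you genuinely diverge, and your route is both heavier than necessary and dimensionally shaky as stated. The paper's argument is one sentence: each $D_{g^i}$ differs from $D_0$ by a ribbon move --- it is obtained by tubing $D_0$ to a $2$-sphere $P=\partial B$ where $B$ is an embedded $3$-ball disjoint from $D_0$ and meeting the tube in a single transverse (ribbon) intersection; capping $P$ with $B$ pushed into $V\times I$ produces an embedded concordance, the ribbon intersection being resolvable in the $5$-dimensional ambient space. Your Norman-trick proposal, by contrast, has to contend with the fact that the double-point set of a generic map $D^2\times I\to V\times I$ is a $1$-manifold while $G$ is a $2$-sphere of codimension $3$ in $V\times I$: ``tubing the double arcs into parallel push-offs of $G$'' is not a well-defined move without replacing $G$ by something like $G\times[\text{interval}]$ and tracking framings along the arcs, and you give no argument that the resulting surgeries can be performed disjointly and compatibly with the boundary condition $\partial D_0\times I$. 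None of that machinery is needed: the concordance is visible directly from the self-referential-disc construction.
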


\begin{proof}  By Theorem \ref{connect sum}, the Dax kernal $D(I_0)=0$. It follows that if $i\neq j$, then $D_{g^i}$ is not isotopic to $D_{g^j}$ since $g^i+g^{-i}\neq g^j+g^{-j}$.  Since each $D_{g^i}$ differs from $D_0$ by a ribbon 3-disc, they are concordant.  See Figure 2 in the introduction.     \end{proof}


\section{Applications and Questions}

As an application we give  examples of knotted 3-balls in 4-manifolds with boundary.  See \cite{BG} and \cite{Wa} for constructions in closed manifolds.  As a prototype we state a result for $M=S^2\times D^2\natural S^1\times B^3$ and  indicate a generalization to other manifolds. 

\begin{theorem}\label{knotted ball two}  If $M=S^2\times D^2\natural S^1\times B^3$ and $\Delta _0= x_0\times B^3$ in the $S^1\times B^3$ factor, then there exist infinitely many 3-balls  properly homotopic to $\Delta_0$, but not pairwise properly isotopic.\end{theorem}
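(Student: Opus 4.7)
The plan is to follow the strategy used for Theorem \ref{knotted ball}, producing one $3$-ball per element of the family $\{D_{g^i}\}_{i \in \BN}$ of pairwise non-isotopic discs furnished by Corollary \ref{knotted disc}. First, for each $i \in \BN$, I would apply the extension of Hannah Schwartz's Lemma 2.3 \cite{Sch} to discs (as in the sketch of Theorem \ref{knotted ball}) to the properly homotopic pair $D_0, D_{g^i}$ with common dual sphere $G$, obtaining a diffeomorphism $\phi_i : M \to M$ that fixes a neighborhood of $\partial M$ pointwise, is homotopic to $\id$ rel $\partial$, and satisfies $\phi_i(D_0) = D_{g^i}$. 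Set $\Delta_i := \phi_i(\Delta_0)$. Each $\Delta_i$ is properly homotopic to $\Delta_0$ rel $\partial$ via $\phi_i$, and is disjoint from $D_{g^i}$ because $\Delta_0$ and $D_0$ are disjoint in $M$.

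To show the $\Delta_i$ are pairwise non-properly-isotopic, I would argue by contradiction. Suppose $\Delta_i$ is properly isotopic to $\Delta_j$ for some $i \neq j$, via an ambient isotopy $F_t : M \to M$ with $F_t|_{\partial M} = \id$, $F_0 = \id$, and $F_1(\Delta_i) = \Delta_j$. Then $F_1(D_{g^i})$ is a properly embedded disc disjoint from $\Delta_j$, so both $F_1(D_{g^i})$ and $D_{g^j}$ lie in $M \setminus \Delta_j$, a simply connected $4$-manifold diffeomorphic to $S^2 \times D^2$ in which $D_{g^j}$ retains the boundary-dual sphere inherited from $G$.

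Next I would verify that $F_1(D_{g^i})$ and $D_{g^j}$ are properly homotopic rel $\partial$ inside $M \setminus \Delta_j$. Both are properly homotopic rel $\partial$ to $D_0$ in $M$ (the former via $F_t$ combined with $\phi_i$, the latter via $\phi_j$), so they represent the same class in $H_2(M, \partial M)$. Since $M \setminus \Delta_j$ is simply connected and the inclusion $M \setminus \Delta_j \hookrightarrow M$ identifies the relevant relative $\pi_2$ classes of properly embedded discs with the prescribed boundary, one deduces the desired proper homotopy rel $\partial$ inside $M \setminus \Delta_j$. Applying Theorem 10.4 of \cite{Ga} inside $M \setminus \Delta_j \cong S^2 \times D^2$ then produces a proper isotopy $F_1(D_{g^i}) \sim D_{g^j}$ rel $\partial$, and composing with $F_t$ yields a proper isotopy $D_{g^i} \sim D_{g^j}$ rel $\partial$ in $M$, contradicting Corollary \ref{knotted disc}.

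The hard part will be promoting the homotopy $F_1(D_{g^i}) \simeq D_{g^j}$ from $M$ down into the complement $M \setminus \Delta_j$. This requires a careful analysis of how removal of the properly embedded $3$-ball $\Delta_j$ affects homotopy classes of discs with fixed boundary --- concretely, showing that the inclusion-induced map on the relevant relative $\pi_2$ group is injective on the classes in question. Once this algebraic step is in hand, the rest of the argument is a direct iteration of the Theorem \ref{knotted ball} template applied to the full family $\{D_{g^i}\}$.
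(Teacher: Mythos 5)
Your proposal follows essentially the same route as the paper's proof: apply the disc version of Schwartz's lemma to get diffeomorphisms $\psi_i$ carrying $D_0$ to $D_{g^i}$, set $\Delta_i=\psi_i(\Delta_0)$, and derive a contradiction with Corollary \ref{knotted disc} by pushing a putative isotopy $\Delta_i\sim\Delta_j$ into the complement $M\setminus\inte(N(\Delta_j))\cong S^2\times D^2$ and invoking Theorem 10.4 of \cite{Ga}. The step you flag as the hard part --- transferring the homotopy rel $\partial$ into the complement --- is the same point the paper treats tersely here and justifies in the subsequent remark via $\pi_2(W)=H_2(\tilde W)$, so your outline matches the intended argument.
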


\begin{remark}  The following result is a straight forward extension of Hannah Schwartz' Lemma 2.3 \cite{Sch} for spheres with dual spheres to discs with dual spheres, with a somewhat different proof. \end{remark}

\begin{lemma} \label{schwartz}  Let $D_0\subset N$ be a properly embedded 2-disc with dual sphere $G$.  If $D_1$ is a properly embedded 2-disc that coincides with $D_0$ near $\partial D_0$ and $D_1$ is homotopic $\rel \partial$ to $D_0$, then there exists a diffeomorphism $\psi:(N,D_0)\to (N, D_1)$.  If $D_1$ is homotopic $\rel \partial$ to $D_0$, then $\psi$ can be chosen to fix a neighborhood of $\partial N$ pointwise.  If $D_0$ is concordant to $D_1$, then $\psi$ can also be chosen to be homotopic to $\id \rel \partial$.\end{lemma}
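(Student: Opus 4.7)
The plan is to apply Theorem \ref{srf} to put $D_1$ in self-referential form relative to $D_0$, and then to realize each elementary modification (double tube or self-referential disc) by a compactly supported diffeomorphism arising from an ambient isotopy. Composing these diffeomorphisms will produce $\psi$. By Theorem \ref{srf}, after an ambient isotopy rel $\partial$, we may assume $D_1$ is obtained from $D_0$ by attaching double tubes along framed embedded paths $\lambda_1,\dots,\lambda_n$ representing distinct nontrivial 2-torsion elements of $\pi_1(N)$, together with self-referential discs with data $(\sigma_1 g_1,\dots,\sigma_k g_k)$. By Proposition \ref{whats left}, these modifications occur in disjoint sectors along a collar of the approach interval, so their realizing diffeomorphisms can be chosen to have pairwise disjoint supports.

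For each self-referential disc with data $(\sigma_j, g_j, \tau_j, B_j)$, I would construct a diffeomorphism $\psi_j$ of $N$ as follows. Choose a small subdisc $\Delta_j \subset \inte(D_0)$ near the attaching endpoint $\tau_0(1)$. A compactly supported ambient isotopy of $N$ pushes $\Delta_j$ along the framed path $\tau = \tau_0 * \tau_1$ through the 3-ball $B_j$ bounded by $P_j$; the time-1 map, extended by the identity outside a 4-dimensional tubular neighborhood of $\tau \cup B_j \cup \Delta_j$, is a diffeomorphism $\psi_j$ satisfying $\psi_j(D_0) = D_0 \cup_{\tau} P_j$, the disc with the prescribed self-referential disc attached. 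The orientation data $\sigma_j$ is encoded in the choice of framing on $\tau$, and the data $g_j$ is encoded in the homotopy class of $\tau_1$. A parallel construction handles each double tube $\lambda_i$: use a copy of the dual sphere $G$ pushed slightly into $\inte(N)$, slide it along $\lambda_i$ via a compactly supported ambient isotopy to produce the two parallel spheres needed for the double tube, and tube onto $D_0$. Each resulting diffeomorphism $\phi_i$ is isotopic to $\id$ through compactly supported ambient isotopies.

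Taking $\psi = \phi_1 \circ \cdots \circ \phi_n \circ \psi_1 \circ \cdots \circ \psi_k$, composed with the ambient isotopy used to put $D_1$ in self-referential form, yields a diffeomorphism $\psi : (N, D_0) \to (N, D_1)$. Each factor is supported away from $\partial N$, giving the refinement that $\psi$ fixes a neighborhood of $\partial N$ pointwise. For the concordance refinement, each factor arises from a compactly supported ambient isotopy starting at $\id$, hence is isotopic—and therefore homotopic—to $\id$ rel $\partial N$; the composite $\psi$ is then homotopic to $\id$ rel $\partial$. The main obstacle will be verifying that the ambient-isotopy construction for a self-referential disc produces exactly the tubing prescribed by $(\sigma_j, g_j)$, keeping track of the framings carefully (recalling Remark ii) after the definition of self-referential disc, the two coherent framings differ by $\pi_1(SO(3))$ but give isotopic results since the tube attaches to a sphere), and ensuring the concordance hypothesis is what rules out a possibly non-null-homotopic twist entering through the composition.
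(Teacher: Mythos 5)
There is a genuine gap, and it sits at the heart of your construction. You build each $\psi_j$ as the time-$1$ map of a \emph{compactly supported ambient isotopy of $N$} and assert $\psi_j(D_0)=D_0\cup_\tau P_j$. But if $D_0\cup_\tau P_j$ were the image of $D_0$ under an ambient isotopy of $N$, it would by definition be isotopic to $D_0$ $\rel\partial$ --- and the whole point of the paper is that it is not: for $N=S^2\times D^2\natural S^1\times B^3$ the disc obtained by attaching a single self-referential disc of type $g$ has $\phi_{D_0}=g+g^{-1}\neq 0$ and is not isotopic to $D_0$ (Theorem \ref{main}, Corollary \ref{knotted disc}); similarly a double tube along a nontrivial $2$-torsion class gives $\hat\lambda\neq 0$. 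The obstruction is exactly that $\inte(\tau)$ crosses the $3$-ball $B_j$, so the finger you push along $\tau$ cannot engulf $P_j$ without running into its own tube; no ambient isotopy exists. The same error propagates to your last paragraph: if every factor of $\psi$ were isotopic to $\id$ $\rel\partial$, then $\psi$ would be too, $\psi(\Delta_0)$ would always be isotopic to $\Delta_0$, and Theorem \ref{knotted ball} would be false. (A symptom of the problem is that in your argument the concordance hypothesis does no work, whereas in the lemma it is exactly what is needed to upgrade from ``fixes $\partial N$'' to ``homotopic to $\id$ $\rel\partial$'' --- not isotopic.)

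The paper's proof takes a different and more economical route that sidesteps this entirely: it does not invoke self-referential form at all. One caps off the dual sphere, setting $N_1=N\cup_{G\times[-\epsilon,\epsilon]}B^3\times[-\epsilon,\epsilon]$, so that $N$ is recovered as the complement of a neighborhood of the cocore arc $\kappa=0\times[-\epsilon,\epsilon]$. A loop $\gamma$ in $\Emb(I,N_1;\kappa)$ preserving the framing of $\kappa$ induces, by isotopy extension in $N_1$, a diffeomorphism of $N$ fixing $\partial N$ pointwise (an ``arc pushing map''); crucially this is an ambient isotopy of $N_1$, not of $N$, so its restriction to $N$ need not be isotopic to $\id_N$. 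Extending $D_0,D_1$ to discs $E_0,E_1\subset N_1$ with boundary on $\kappa\cup\partial N_1$ and taking $\gamma$ to retract along $E_0$ and expand along $E_1$ produces $\psi_\gamma$ with $\psi_\gamma(D_0)=D_1$; the concordance hypothesis is then used to show the trace of $\kappa$ under the suspended isotopy in $N_1\times I$ is isotopic $\rel\partial$ to $\kappa\times I$, which yields that $\psi_\gamma$ is homotopic to $\id$ $\rel\partial$. If you want to salvage your approach, you would have to replace each ``ambient isotopy of $N$'' by an ambient isotopy of a larger manifold (or an arc/disc pushing map) whose restriction to $N$ is only a diffeomorphism fixing $\partial N$, and then supply a separate argument for the homotopy-to-identity claim.
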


\begin{proof}  Let $G\times [-\epsilon, \epsilon]$ be a product neighborhood of $G\subset \partial N$.  Let $N_1=N\cup_{G\times [-\epsilon,\epsilon]} B^3\times [-\epsilon,\epsilon]$.  Then $N$ is obtained from $N_1$ by removing a neighborhood of the arc $\kappa=0\times [-\epsilon,\epsilon]$.  Any loop $\gamma \in \Emb(I,N_1;\kappa)$ whose time 1 map preserves the framing of $T(\kappa)$ induces $\psi_1:(N_1,\kappa)\to (N_1,\kappa)$ fixing $\partial N_1\cup N(\kappa)$ pointwise and hence a map $ \psi_\gamma:N\to N$ fixing $\partial N$ pointwise, otherwise it induces a diffeomorphism that twists the boundary.  Such a diffeomorphism is called an \emph{arc pushing map}.

Since $D_0, D_1$ coincide near $N(\partial D_0)$, we can extend slightly to discs $E_1, E_0$ in $N_1$, which coincide in $N_1\setminus N$ with $\partial E_0\subset \kappa\cup \partial N_1$.  Let $\gamma$ be the arc pushing map which  first deformation retracts $E_0$ to a small neighborhood of $\partial E_0$ and then expands along $E_1$.  If $D_1$ is homotopic to $D_0$ such an isotopy can be constructed to preserve the normal framing  of $\kappa$ and hence induce a diffeomorphism $\psi_\gamma:(N,D_0)\to (N,D_1)$ which fixies $N(\partial N)$ pointwise.  

If $\hat\psi_\gamma:N_1\times I\to N_1\times I$ is the map induced from suspending the ambient isotopy induced from $ \gamma$, then $ \kappa$ tracks out a properly embedded disc.  If $D_1$ is concordant to $D_0$, then this disc is isotopic $\rel \partial$ to $\kappa\times I$, in which case $\psi_\gamma$ is homotopic to $\id \rel \partial$.\end{proof}

\begin{remark}  It suffices that $D_1$ and $D_0$ induce the same framing on their boundaries to enable $\psi$ to fix $\partial N$ pointwise.\end{remark}

\noindent\emph{Proof of Theorem \ref{knotted ball two}}.  Let $g$ be a generator of $\pi_1(M)$.  Let $D_i$ be the disc $D_{g^i}$ of Theorem \ref{knotted disc}.  By that result all these $D_i$'s are homotopic, in fact concordant, yet pairwise not isotopic $\rel \partial$.  Apply the lemma to obtain $\psi_i:M\to M$ a diffeomorphism, properly homotopic to $\id$ fixing $N(\partial M)$ pointwise such that $\psi_i(D_0)=D_i$.  Let $\Delta_i=\psi_i(\Delta_0)$.  Since $\Delta_0\cap D_0=\emptyset$ it follows that for all $i$, $\Delta_i\cap D_i=\emptyset$.  If $\Delta_i$ is properly isotopic to $\Delta_j, i\neq j$, then the corresponding ambient isotopy takes $D_i$ to $D'_i$ with $D'_i\cap \Delta_j=\emptyset$.  Now $M\setminus \inte(N(\Delta_0))$ is diffeomorphic to $S^2\times D^2$ and hence so is $M\setminus \inte(N(\Delta_j))$.  Since $\Delta_i'$ is properly homotopic to $\Delta_j$ in $M$. $D_i'$ is homotopic $\rel \partial$ to $D_j$ in this $S^2\times D^2$.  By Theorem 10.4 \cite{Ga}, $D'_i $ is isotopic $\rel \partial$ to $ D_j$, which is a contradiction.\qed

\begin{remark}  In a somewhat similar manner we obtain knotted 3-balls in some manifolds of the form  $W=M\natural S^1\times B^3$ where $D_0\subset M$ has a dual sphere $G\subset M$.  Here $\pi_1(W)=\pi_1(M)*\BZ$.  Let $t$ denote a generator of $\BZ$.  We require that the subgroup of $\BZ[\pi_1(W)\setminus 1]$ generated by $t^n+t^{-n}, n\in \BN$ is not contained in the subgroup generated by $\BZ[\pi_1(M)]+D(I_0)$.  For example, manifolds $W$, where $M$ of the form $S^2\times D^2\natural Y$ and $\pi_3(Y)=0$. Define $\Delta_0=x_0\times B^3$ and let $D_1$ be obtained by attaching self-referential discs to $D_0$ so that $\phi_{D_0}(D_1)\notin \BZ[\pi_1(M)]+D(I_0)$.  Now modify $\Delta_0$ to $\Delta_1$ by embedded surgery so that $\Delta_1\cap D_1=\emptyset$ and $\Delta_1$ is homotopic $\rel \partial$ to $\Delta_0$.  If $\Delta_1$ can be isotoped to $\Delta_0$, then $D_1$ can be isotoped into $M$.  Since $D_1$ is homotopic to $D_0$ in $W$, a homotopy can be constructed to be supported in $M$.  This can be seen be remembering that $\pi_2(W)=H_2(\tilde W)$, so a 2-sphere in $\tilde W$ homologically trivial in $\tilde W$ is homologically trivial in $\tilde W \setminus \piinv(\Delta_0)$, where $\pi$ is the covering projection.  It follows that $\phi_{D_0}(D_1)\in \BZ[\pi_1(M)]+D(I_0)$ a contradiction.

Note that the analogous construction does not work for $V=S^2\times D^2\# S^1\times B^3$ for the standard $D_0$ which lies in the $S^2\times D^2$ factor, since for this $D_0$ homotopy implies isotopy.   That is because the separating 3-sphere can be used to disentangle a single self-referential disc. Also multiple self-referential discs can be disentangled using the separating 3-sphere and the light bulb lemma.     \end{remark}
\vskip 10pt
We conclude with a problem and two questions.

\begin{problem} Complete the isotopy classification of properly embedded discs in 4-manifolds with dual spheres.  \end{problem}

The following question specializes this problem to 4-manifolds without 2-torsion in their fundamental groups?

\begin{questions} Let $D_0\subset M$ be a properly embedded disc with dual sphere $G$ such that $\pi_1(M)$ has no 2-torsion.  Let $\mD$ be the isotopy classes of embedded discs homotopic to $D \rel \partial$.  Let $\phi_{D_0}:\mD\to \BZ[\pi_1(M, z)\setminus 1]/D\cong\Emb(I,M;I_0)$ be the canonical homomorphism.  What is $\ker \phi_{D_0}$?  In particular, if $M=S^2\times D^2\natural S^1\times B^3$, is $D_g$ isotopic $\rel \partial $ to $D_{g^{-1}}$?  \end{questions}


\enddocument